 \newtheorem{theorem}{Theorem}[section]
 \newtheorem{lemma}{Lemma}[section]
 \newtheorem{proposition}{Proposition}[section]
\theoremstyle{remark}
 \newtheorem{remark}{Remark}[section]
\definecolor{grassgreen}{RGB}{128,255,0}
\newcommand{\R}{\mathbb{R}}
\newcommand{\avg}{\mathcal{M}}
\newcommand{\Hproj}{\mathcal{Q}}
\newcommand{\proj}{\mathcal{P}}
\newcommand{\cP}{\mathcal P}
\newcommand{\cH}{\mathcal H}
\newcommand{\cQ}{\mathcal Q}
\newcommand{\cR}{\mathcal R}
\newcommand{\cM}{\mathcal M}
\DeclareMathOperator{\Span}{span}
\numberwithin{equation}{section}
\numberwithin{table}{section}
\numberwithin{figure}{section}
\DeclareMathOperator{\grad}{grad}
\DeclareMathOperator{\curl}{curl}
\DeclareMathOperator{\divergence}{div}
\renewcommand{\div}{\divergence}
\DeclareMathOperator{\rot}{rot}
\newcommand{\LL}{\langle\!\langle}
\newcommand{\RR}{\rangle\!\rangle}
\renewcommand{\theequation}{\arabic{section}.\arabic{equation}}
\begin{document}

\title[Commuting Projection Operator for Discrete de Rham Complexes]{Local Bounded Commuting Projection Operator for Discrete de Rham Complexes}
\author{Jun Hu}
\address{LMAM and School of Mathematical Sciences, Peking University, Beijing 100871,
P. R. China.}
\email{hujun@math.pku.edu.cn}
\author{Yizhou Liang}
\address{Institute of Mathematics, University of Augsburg, Universit\"{a}tsstraße 12A, 86159 Augsburg, Germany}
\email{yizhou.liang@uni-a.de}
\author{Ting Lin}
\address{School of Mathematical Sciences, Peking University, Beijing 100871,
P. R. China.}
\email{lintingsms@pku.edu.cn}

\begin{abstract}
The local bounded commuting projection operators of nonstandard finite element de Rham complexes in two and three dimensions are constructed systematically. The assumptions of the main result are mild and can be verified. For three dimensions, the result can be applied to the standard finite element de Rham complex, Hermite complex, Argyris  complex and Neilan's Stokes complex. For two dimensions, the result can be applied to the Hermite--Stenberg complex and the Falk--Neilan Stokes complex.
\end{abstract}
\subjclass[2010]{65N30}
\thanks{The first author was supported by the National Natural Science Foundation of China
grants NSFC 12288101. The second author was supported by Humbodlt Research Fellowship for Postdocs.}
\maketitle

\section{Introduction}
Bounded projections which commute with the governing differential operators play an important role in the analysis of nonstandard finite
element methods, cf. \cite{1974Brezzi,1991BrezziFortin}. 
Particularly, such projections have become a primary instrument in the finite element exterior calculus (FEEC)\cite{2006ArnoldFalkWinther,2010ArnoldFalkWinther,2018Arnold}, which studies the structure preserving discretization of the de Rham complex in two dimensions
$$	\R \xrightarrow{\subset} H^1(\Omega) \xrightarrow[]{\operatorname{curl}} H(\div,\Omega;\mathbb R^2) \xrightarrow[]{\operatorname{div}} L^2(\Omega)\xrightarrow{}0, $$
and in three dimensions
$$	\R \xrightarrow{\subset} H^1(\Omega)\xrightarrow[]{\operatorname{grad}} H(\curl,\Omega;\mathbb R^3) \xrightarrow[]{\operatorname{curl}} H(\div,\Omega;\mathbb R^3) \xrightarrow[]{\operatorname{div}} L^2(\Omega)\xrightarrow{}0. $$

However, a key difficulty is that the canonical projections defined from the degrees of freedom of the finite element spaces are not well-defined on the basic function spaces above (e.g. $H^1(\Omega)$). 
How to construct bounded commuting projections of the de Rham complex to a finite element subcomplex 
$$	\R \xrightarrow{\subset}\Lambda_h^0 \xrightarrow[]{\operatorname{grad}} \Lambda_h^1 \xrightarrow[]{\operatorname{curl}}\Lambda_h^2 \xrightarrow[]{\operatorname{div}}\Lambda_h^3 \xrightarrow{}0$$
is then a difficult problem.

A remarkable construction of bounded commuting projections was given by Sch\"{o}berl~\cite{2005Schoberl} and later improved by Christiansen~\cite{2008ChristiansenWinther}, 
where a smoothing operator is combined with the canonical projection to obtain a commuting projection operator which is globally bounded. 
However, the projection of \cite{2008ChristiansenWinther} is not local, that is, the projection of a function $u$ on a simplex $\sigma$ cannot be determined solely by $u$ on the patch of elements surrounding $\sigma$. 
Recently, Falk and Winther \cite{2014FakWinther,2015FalkWinther} constructed local bounded commuting projections to the standard finite element de Rham complex, 
with the Lagrange finite element space considered as a subspace of the Sobolev space $H^1(\Omega)$, and the Raviart-Thomas\cite{1977RaviartThomas}, Brezzi-Douglas-Marini\cite{1985BrezziDouglasMarini}, and N\'{e}d\'{e}lec\cite{1980Nedelec,1986Nedelec} finite element spaces
considered as subspaces of $H(\operatorname{div}, \Omega; \mathbb R^3)$ or $H(\operatorname{curl}, \Omega; \mathbb R^3)$, respectively. 
More recently, Arnold and Guzm\'{a}n \cite{2021ArnoldGuzman} constructed another commuting projection operators for the standard finite element de Rham complex. 
Note that the projection operators in \cite{2021ArnoldGuzman} are $L^2$-bounded. See \cite{2014FakWinther,2021ArnoldGuzman} for more details.

In this paper, we construct local bounded, commuting projections from 2D and 3D de Rham complexes to subcomplexes consisting of finite element subspaces of arbitrary
polynomial degrees with extra smoothness. Some of these elements can be considered as generalizations of the scalar Argyris, Hermite, and Lagrange elements and vector-valued Stenberg element\cite{2010Stenberg}. 
For example, the two-dimensional Hermite-Stenberg finite element complex takes the Hermite finite element space as a subspace of $H^1(\Omega)$, the Stenberg $H(\div)$ conforming space as a subspace of $H(\div,\Omega;\mathbb R^2)$, which is illustrated as follows. 

\begin{figure}[htb] \begin{center} \setlength{\unitlength}{1.20pt}

    \begin{picture}(270,45)(0,5)
\put(0,0){\begin{picture}(70,70)(0,0)\put(0,0){\line(1,0){60}}
    \put(0,0){\line(2,3){30}}\put(60,0){\line(-2,3){30}}
    \put(0,0){\circle*{5}}\put(0,0){\circle{12}}
    \put(60,0){\circle*{5}}\put(60,0){\circle{12}}
    \put(30,45){\circle*{5}}\put(30,45){\circle{12}}
	\put(28,15){$1$}
   \end{picture}}
  
\put(90,20){$\xrightarrow[]{\curl}$}
\put(120,0){\begin{picture}(70,70)(0,0)\put(0,0){\line(1,0){60}}
    \put(0,0){\line(2,3){30}}\put(60,0){\line(-2,3){30}}
    \put(-2,0){\circle*{5}}\put(2,0){\circle*{5}}
    \put(58,0){\circle*{5}}\put(62,0){\circle*{5}}
    \put(28,45){\circle*{5}}\put(32,45){\circle*{5}}
	\put(15,22.5){\vector(-3,2){10}}\put(45,22.5){\vector(3,2){10}}
    \put(30,0){\vector(0,-1){10}}
    \put(28,15){$3$}
   \end{picture}}
   \put(210,20){$\xrightarrow[]{\div}$}
  \put(240,0){\begin{picture}(70,70)(0,0)\put(0,0){\line(1,0){60}}
    \put(0,0){\line(2,3){30}}\put(60,0){\line(-2,3){30}}
    \put(28,15){$3$}
   \end{picture}}
\end{picture}
\end{center}
\end{figure}

We refer to \cite{2018ChristiansenHuHu} for the construction of these finite element de Rham complexes. For the simplicity of the presentation, we state an algebraic construction of these commuting projection operators for the 2D and 3D cases, under some conditions and assumptions. And we verify that the finite element de Rham complexes in \cite{2018ChristiansenHuHu} and the finite element Stokes complexes from \cite{2015Neilan,2013FalkNeilan} satisfy these conditions and assumptions. 

As in \cite{2015FalkWinther}, a family of weight functions, denoted as $z_{\sigma}^k$ (see sections below for more precise definitions), which resembles the \v{C}ech de Rham double complex, will be proposed to construct the projections. These weight functions are the generalizations to the more general cases of those for the first-order finite element de Rham subcomplex \cite{2015FalkWinther}.

The rest of the paper is organized as follows.  \Cref{sec:main-result} states the main result of this paper, i.e., a framework of the construction of local bounded commuting projections. \Cref{sec:proof} provides the proof of the main result. \Cref{sec:2D} and \Cref{sec:3D} consider the applications of the framework in two and three dimensions, respectively. 

\section{Assumptions and Main Results}
\label{sec:main-result}

Given any domain $\omega$ in $\mathbb R^2$ or $\mathbb R^3$, denote by $(u,v)_{\omega}$ the standard inner product $\int_{\omega} u\cdot v$ for scalar or vector valued functions $u,v \in L^2(\omega)$. Suppose $X \subset L^2(\omega)$, $u \in L^2(\omega)$, the notation $u \perp X$ means $(u, v)_{\omega} = 0, \forall v \in X$. Additionally, $u \perp \mathbb R$ means $(u,1)_{\omega} = 0$.

\subsection{Two-dimensional Results}

For two dimensions, suppose that a contractible polygonal domain $\Omega$ and a simplicial triangulation $\mathcal T$ are given. Denote by $\mathsf V$ the set of vertices, $\mathsf E$ the set of edges, $\mathsf F$ the set of faces, and $\mathsf S = \mathsf V\cup \mathsf E \cup \mathsf F$ the set of all simplices. The letters $\bm x$, $\bm e$, $\bm f$ are used to represent any vertex, edge, and face of the mesh $\mathcal T$, respectively.
Given a subsimplex $\sigma$ (vertex, edge and face) of $\mathcal T$, define the local patch $\omega_{\sigma}:= \bigcup\{\bm f: \bm \sigma \subseteq \overline{\bm f}\}$, and the extended local patches sequentially:
$$\omega_{\sigma}^{[\ell]} := \cup_{\bm f}\{ \bm f : \overline{\bm f} \cap \overline{\omega_{\sigma}^{[\ell-1]}} \neq \emptyset \}, \text{ with } \omega_{\sigma}^{[0]} = \omega_{\sigma}. $$ Here $\overline{\omega}$ is the closure of the domain $\omega$, see \Cref{fig:omega} for an illustration. Moreover, define the extended patch $\omega_{\sigma}^{h}: =\cup \{\omega_{\bm{x}}:\bm{x}\subseteq \overline{\sigma}\}$. Then it holds that
\begin{equation*}
	\omega_{\sigma}\subseteq \omega_{\sigma}^{h} \subseteq\omega_{\sigma}^{[1]}.
\end{equation*}
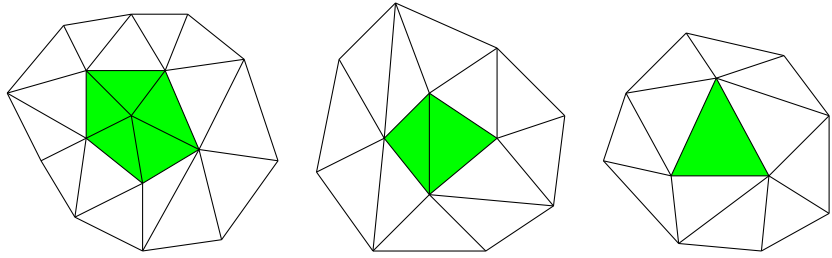
\begin{figure}[htbp]
	\centering
	\begin{tikzpicture}[scale = .6]
		\node (0) at (0, 0) {~~$x$};
		\node  (1) at (0.75, 1) {};
		\node  (2) at (-1, 1) {};
		\node (3) at (-1, -0.5) {};
		\node  (4) at (0.25, -1.5) {};
		\node  (5) at (1.5, -0.75) {};
		\node  (6) at (1.25, 2.25) {};
		\node  (7) at (0, 2.25) {};
		\node  (8) at (-1.5, 2) {};
		\node  (9) at (-2.75, 0.5) {};
		\node  (10) at (-2, -1) {};
		\node  (11) at (-1.25, -2.25) {};
		\node  (12) at (0.25, -3) {};
		\node  (13) at (2, -2.75) {};
		\node  (14) at (3, -2) {};
		\node  (15) at (3.25, -1) {};
		\node  (16) at (2.5, 1.25) {};
		\node  (17) at (2.75, 5.25) {};
		\fill[green]  (1.center) to (2.center) to (3.center) to (4.center) to (5.center) to  (1.center);
		\draw (0.center) to (2.center);
		\draw (2.center) to (3.center);
		\draw (3.center) to (0.center);
		\draw (0.center) to (4.center);
		\draw (3.center) to (4.center);
		\draw (4.center) to (5.center);
		\draw (5.center) to (0.center);
		\draw (0.center) to (1.center);
		\draw (1.center) to (5.center);
		\draw (2.center) to (1.center);
		\draw (1.center) to (6.center);
		\draw (7.center) to (6.center);
		\draw (7.center) to (1.center);
		\draw (2.center) to (7.center);
		\draw (2.center) to (8.center);
		\draw (8.center) to (7.center);
		\draw (6.center) to (16.center);
		\draw (1.center) to (16.center);
		\draw (16.center) to (5.center);
		\draw (5.center) to (15.center);
		\draw (15.center) to (16.center);
		\draw (15.center) to (13.center);
		\draw (13.center) to (5.center);
		\draw (12.center) to (13.center);
		\draw (12.center) to (4.center);
		\draw (5.center) to (12.center);
		\draw (4.center) to (11.center);
		\draw (11.center) to (12.center);
		\draw (10.center) to (11.center);
		\draw (10.center) to (3.center);
		\draw (3.center) to (11.center);
		\draw (3.center) to (9.center);
		\draw (9.center) to (10.center);
		\draw (9.center) to (2.center);
		\draw (8.center) to (9.center);
\end{tikzpicture}
    \begin{tikzpicture}[scale = .6]
		\node  (0) at (0, 1) {};
		\node  (1) at (0, -1.25) {};
		\node  (2) at (-1, 0) {};
		\node  (3) at (1.5, 0) {};
		\node  (4) at (1.5, 2) {};
		\node  (5) at (3, 0.5) {};
		\node  (6) at (2.75, -1.5) {};
		\node  (7) at (1.25, -2.5) {};
		\node  (8) at (-1.25, -2.5) {};
		\node  (9) at (-2.5, -0.75) {};
		\node  (10) at (-2, 1.75) {};
		\node  (11) at (-0.75, 3) {};
		\fill[green]  (0.center) to (2.center) to (1.center) to (3.center) to (0.center);
		\draw (0.center) to (1.center);
		\draw (1.center) to (2.center);
		\draw (2.center) to (0.center);
		\draw (0.center) to (3.center);
		\draw (3.center) to (1.center);
		\draw (11.center) to (0.center);
		\draw (0.center) to (4.center);
		\draw (4.center) to (3.center);
		\draw (3.center) to (5.center);
		\draw (4.center) to (5.center);
		\draw (5.center) to (6.center);
		\draw (6.center) to (7.center);
		\draw (3.center) to (6.center);
		\draw (1.center) to (7.center);
		\draw (1.center) to (6.center);
		\draw (1.center) to (8.center);
		\draw (8.center) to (7.center);
		\draw (2.center) to (8.center);
		\draw (9.center) to (8.center);
		\draw (9.center) to (2.center);
		\draw (11.center) to (2.center);
		\draw (11.center) to (10.center);
		\draw (10.center) to (2.center);
		\draw (11.center) to (4.center);
		\draw (10.center) to (9.center);
\end{tikzpicture}
    \begin{tikzpicture}[scale=.4]
		\node  (0) at (0, 2) {};
		\node  (1) at (-1.5, -1.25) {};
		\node  (2) at (1.75, -1.25) {};
		\node  (3) at (2.25, 2.75) {};
		\node  (4) at (3.75, 0.75) {};
		\node  (5) at (3.75, -2.5) {};
		\node  (6) at (1.5, -3.75) {};
		\node  (7) at (-1.25, -3.5) {};
		\node  (8) at (-3.75, -0.75) {};
		\node  (9) at (-3, 1.5) {};
		\node  (10) at (-1, 3.5) {};
		\fill[green] (0.center) to (1.center) to (2.center) to (0.center);
		\draw (0.center) to (1.center);
		\draw (1.center) to (2.center);
		\draw (2.center) to (0.center);
		\draw (0.center) to (3.center);
		\draw (3.center) to (4.center);
		\draw (0.center) to (4.center);
		\draw (4.center) to (2.center);
		\draw (2.center) to (5.center);
		\draw (4.center) to (5.center);
		\draw (5.center) to (6.center);
		\draw (2.center) to (6.center);
		\draw (6.center) to (7.center);
		\draw (7.center) to (2.center);
		\draw (1.center) to (7.center);
		\draw (7.center) to (8.center);
		\draw (8.center) to (1.center);
		\draw (1.center) to (9.center);
		\draw (9.center) to (8.center);
		\draw (9.center) to (0.center);
		\draw (10.center) to (0.center);
		\draw (3.center) to (10.center);
		\draw (10.center) to (9.center);
\end{tikzpicture}	
	\caption{The (extended) local patch, $\omega_{\bm x}^{[1]}$, $\omega_{\bm e}^{[1]}$ and $\omega_{\bm f}^{[1]}$. Colored regions represent $\omega_{\bm x}$, $\omega_{\bm e}$ and $\omega_{\bm f}$ respectively.}
	\label{fig:omega}
\end{figure}

To unify the notation, denote the $\Lambda^0 = H^1$ finite element space as $\Lambda_h^0$, the $\Lambda^1 = H(\div)$ finite element space as $\Lambda_h^1$, and the $\Lambda^2 = L^2$ finite element space as $\Lambda_h^2$.

For the finite element space $\Lambda_h^0$, the (global) degrees of freedom consist of $u(\bm x), \bm x \in \mathsf V$,
and $ f_{\sigma, i}^0, i = 1,2,\ldots, N_{\sigma}^0, \sigma \in \mathsf S$. 
Here $N_{\sigma}^0$ is the number of the degrees of freedom (except $u(\bm x)$ if $\sigma = \bm x$) attached to the sub-simplex $\sigma$ of the space $\Lambda_h^0$. Note that in almost all the cases in this paper, $N_{\bm x}^0$ is nonzero for all vertices $\bm x \in \mathsf V$.
The corresponding basis functions are then denoted as $\varphi_{\bm x}$ and $\phi_{\sigma, i}^0$, respectively. 
Define $L_{\sigma}^0 = \sum_{i = 1}^{N_{\sigma}^0} f_{\sigma, i}^0(\cdot) \phi_{\sigma,i}^0$ as the collection of all the degrees of freedom defined on the sub-simplex $\sigma$ (expect $u(\bm x)$ if $\sigma = \bm x$).
Then by the definitions of the basis functions and degrees of freedom, it holds that for $u \in \Lambda_h^0$,
\begin{equation}\label{eq:dofid-u}
	u = \sum_{\sigma} L_{\sigma}^0u + \sum_{\bm x} u(\bm x) \varphi_{\bm x}.
\end{equation}
Here $\sum_{\sigma}$ and $\sum_{\bm x}$ are the abbreviations of $\sum_{\sigma \in \mathsf S}$ and $\sum_{\bm x \in \mathsf V}$ respectively, while $\sum_{\bm e}$ and $\sum_{\bm f}$ appearing below will be understood in the same way.

For the finite element space $\Lambda_h^1$, the (global) degrees of freedom consist of
$(\bm v \cdot \bm n,1)_{\bm e}, \bm e \in \mathsf E,$ (where $\bm n$ is the unit normal vector of the edge $\bm e$)
and $ f_{\sigma, i}^1(\bm v), i = 1,2,\ldots, N_{\sigma}^1$ for some nonnegative integer $N_{\sigma}^1$, $\sigma \in \mathsf S$. The corresponding basis functions are then denoted by $\varphi_{\bm e}$ and $ \phi_{\sigma, i}^1$, respectively. Define
$L_{\sigma}^1 = \sum_{i = 1}^{N_{\sigma}^1} f_{\sigma, i}^1(\cdot)\phi_{\sigma,i}^1.$
Then by the definitions of the basis functions and degrees of freedom, it holds that for $\bm v \in \Lambda_h^1$,
\[ \bm v = \sum_{\sigma} L_{\sigma}^1 \bm v + \sum_{\bm e} (\bm v \cdot \bm n,1)_{\bm e}  \varphi_{\bm e}.\]

For the finite element space $\Lambda_h^2$, the (global) degrees of freedom consist of
$(p,1)_{\bm f}, \bm f \in \mathsf F,$
and $ f_{\sigma, i}^2(p), i = 1,2,\ldots, N_{\sigma}^2$ for some nonnegative integer $N_{\sigma}^2$, $\sigma \in \mathsf S.$ The corresponding basis functions are then denoted by $\varphi_{\bm f}$ and $\phi_{\sigma, i}^2$, respectively. Define
$L_{\sigma}^2 = \sum_{i = 1}^{N_{\sigma}^2} f_{\sigma, i}^2(\cdot)  \phi_{\sigma,i}^2.$
Then by the definitions of the basis functions and degrees of freedom, it holds that for $p \in \Lambda_h^2$,
\[ p = \sum_{\sigma} L_{\sigma}^2 p + \sum_{\bm f} (p,1)_{\bm f} \varphi_{\bm f}.\]

Here are the assumptions for the two-dimensional main result of this paper.

\begin{remark}
	An implicit assumption is that {\bf (A0)} the finite element spaces contain such degrees of freedoms like $u(\bm x)$, $(\bm v\cdot \bm n,1)_{\bm e}$, and $(p,1)_{\bm f}$, respectively.
	\end{remark}

\begin{itemize}
\item[{\bf (A1)}]
The finite element sequence
\begin{equation}\label{eq:2D:hermitecompl}
	\R \xrightarrow{\subset} \Lambda^0_{h}\xrightarrow[]{\operatorname{curl}} \Lambda^1_{h} \xrightarrow[]{\operatorname{div}} \Lambda^2_{h}\xrightarrow{}0
\end{equation} is an exact complex. The exactness also holds when restricted on any (extended) patch $\omega =  \omega_{\sigma}^{h}$ or $\omega_{\sigma}^{[\ell]}$  for non-negative $\ell$, namely,
the sequence
\begin{equation}
	\R \xrightarrow{\subset} \Lambda^0_{h}(\omega)\xrightarrow[]{\operatorname{curl}} \Lambda^1_{h}(\omega)\xrightarrow[]{\operatorname{div}} \Lambda^2_{h}(\omega)\xrightarrow{}0,
\end{equation}
is exact, where $\Lambda_h^k(\omega)$ is the restriction on $\omega$ of $\Lambda_h^k$.

	\item[{\bf (A2)}] It holds that $\curl \varphi_{\bm x} \in \Span \{ \varphi_{\bm e}, \bm e \in \mathsf E\}$ for any vertex $\bm x \in \mathsf V$, and $\div \varphi_{\bm e} \in \Span  \{ \varphi_{\bm f}, \bm f \in \mathsf F\}$ for any edge $\bm e \in \mathsf E$. This implies that, \begin{equation}\label{eq:assumption}f_{\sigma,i}^1(\curl \varphi_{\bm x}) = 0,\,\,f_{\sigma,i}^2(\div \bm \varphi_{\bm e}) = 0 \text{ for all the simplices }\sigma \in \mathsf S.
	      \end{equation}
	\item[{\bf (A3)}] The collection $L_{\sigma}^0$ of the degrees of freedom  vanishes for the constant, namely, $L_{\sigma}^0(c) = 0$ for $c \in \R$ and $\sigma \in \mathsf S$.
	\item[{\bf (A4)}] For $l = 0,1,2$, the mapping $L_{\sigma}^l$ is locally defined and locally supported, namely, for $\eta \in \Lambda_h^l$, the value of $L_{\sigma}^l\eta$ on $\sigma$ only depends on the value of $\eta$ on $\omega_{\sigma}$, and the supports of $L_{\sigma}^l \eta$ and $\varphi_{\sigma}$ are a subset of  $\omega_{\sigma}.$

\end{itemize}

\begin{theorem}
	\label{thm:main-2d}
	Under the above assumptions {\bf(A1)}--{\bf(A4)}, there exist operators $\pi^k$, $k = 0,1,2$, which are projection operators from $\Lambda^k$ to  $\Lambda_h^k$, such that the following diagram commutes.
	\begin{equation}
		\label{eq:cd2}
		\xymatrix{
		\R  \ar[r] & H^1(\Omega) \ar[d]_-{\pi^0}\ar[r]^-{\curl} & H(\div, \Omega; \mathbb R^2) \ar[d]_-{\pi^1} \ar[r]^-{\div} & L^2(\Omega) \ar[d]_-{\pi^2} \ar[r] & 0 \\
		\R \ar[r] & \Lambda_{h}^0 \ar[r]^-{\curl} &\Lambda_{h}^1 \ar[r]^-{\div} & \Lambda_{h}^2 \ar[r] & 0
		}
	\end{equation}
	Namely, $\curl \pi^0 = \pi^1 \curl$, $\pi^2 \div = \div \pi^1$, and the value of $\pi^k(\cdot)$ on subsimplex $\sigma$ is locally determined.

	If moreover, $\mathcal T$ is shape-regular, and the shape function space and degrees of freedom in each element are affine-interpolant equivalent to each other, then the projection operators are local bounded, i.e.,
	$$\|\pi^0 u\|_{H^1(\bm f)} \le C\|u\|_{H^1(\omega_{\bm f}^{[1]})}, \|\pi^1 \bm v\|_{H(\div,\bm f)} \le C\|\bm v\|_{H(\div,\omega^{[2]}_{\bm f})}, \|\pi^2 p\|_{L^2(\bm f)} \le C\|p\|_{L^2(\omega^{[2]}_{\bm f})},$$
	for any $\bm f \in \mathcal T$.
	Here the constant $C$ only depends on the shape regularity and the reference finite element. As a consequence, all the operators are globally bounded, i.e., $\pi^0$ is $H^1$ bounded, $\pi^1$ is $H(\div)$ bounded, and $\pi^2$ is $L^2$ bounded.

\end{theorem}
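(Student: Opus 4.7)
The plan is to follow the Falk--Winther strategy \cite{2015FalkWinther}, suitably generalized to handle the exotic degrees of freedom (point values, derivative values at vertices, etc.) appearing in Argyris/Hermite/Stokes-type elements. The construction of each $\pi^k$ will be built from two ingredients on extended patches: (i) a family of weight functions $z_\sigma^k$, supported in $\omega_\sigma^{[\ell]}$ for a suitable $\ell$, forming a \v{C}ech--de Rham-like structure so that the action of $\curl,\div$ on functions matches a coboundary operator on the simplicial indices; and (ii) local reconstructions defined by the patchwise exactness guaranteed by \textbf{(A1)}, giving right-inverses to $\curl$ and $\div$ on each $\omega_\sigma^{[\ell]}$.

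First I would define, for each simplex $\sigma$, weight-pairing functionals $M_\sigma^k(\cdot) = \langle z_\sigma^k, \cdot\rangle$ on the extended patch, designed so that $M_\sigma^k$ agrees with the dangerous nodal value (e.g.\ $u(\bm x)$ for $k=0$, $(\bm v\cdot\bm n,1)_{\bm e}$ for $k=1$, $(p,1)_{\bm f}$ for $k=2$) whenever the input already lives in $\Lambda_h^k$. Next I would build a smoothed representative $\tilde u^k$ on each patch via local solves, so that applying $L_\sigma^k$ to it is meaningful at the Sobolev level. Then set
\begin{equation*}
  \pi^0 u = \sum_{\bm x} M_{\bm x}^0(u)\,\varphi_{\bm x} + \sum_{\sigma} L_\sigma^0(\tilde u^0),\qquad
  \pi^1 \bm v = \sum_{\bm e} M_{\bm e}^1(\bm v)\,\varphi_{\bm e} + \sum_{\sigma} L_\sigma^1(\tilde{\bm v}^1),\qquad
  \pi^2 p = \sum_{\bm f} M_{\bm f}^2(p)\,\varphi_{\bm f} + \sum_{\sigma} L_\sigma^2(\tilde p^2),
\end{equation*}
with $\tilde u^k$ constructed from $u$ through a local problem on $\omega_\sigma^{[\ell]}$. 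The projection property $\pi^k|_{\Lambda_h^k}=\mathrm{id}$ then follows from the reproducing identity \eqref{eq:dofid-u} together with \textbf{(A3)} (so that constants are handled at the $k=0$ end) and the defining property of $z_\sigma^k$.

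The key commutativity step is $\curl\pi^0 = \pi^1\curl$ and $\div\pi^1 = \pi^2\div$. For the first identity, I would use \textbf{(A2)}: since $\curl\varphi_{\bm x}\in\Span\{\varphi_{\bm e}\}$, the differential of the $M$-part of $\pi^0$ maps cleanly into the $M$-part of $\pi^1$, and one must verify that the \v{C}ech-coboundary relation between $z_{\bm x}^0$ and $\{z_{\bm e}^1\}_{\bm e\ni\bm x}$ matches the algebraic expansion of $\curl\varphi_{\bm x}$ in the edge basis. For the $L_\sigma$-part, the assumption \eqref{eq:assumption} ensures no cross-contamination: $\curl$ of an $L_\sigma^0$-term produces no $f_{\sigma',i}^1$-contribution, so $\curl$ commutes with $L_\sigma^0$ applied after smoothing, provided the smoothings $\tilde u^0$ and $\tilde{\bm v}^1$ are compatibly defined. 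An analogous argument, pairing \textbf{(A2)} with the coboundary structure of $z_\sigma^k$, gives $\div\pi^1=\pi^2\div$.

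Finally, locality is immediate from \textbf{(A4)} together with the supports of $z_\sigma^k$ inside the extended patches, yielding the stated dependence on $\omega_{\bm f}^{[1]}$, $\omega_{\bm f}^{[2]}$. The local boundedness follows from a standard Piola-scaling argument on the shape-regular reference element, in which the affine-interpolant equivalence of degrees of freedom transfers continuity of $L_\sigma^k$ and of the local solves uniformly in $h$; summing the local bounds gives global boundedness in $H^1$, $H(\div)$, $L^2$ respectively. \textbf{Main obstacle:} the real work lies in constructing the weight functions $z_\sigma^k$ and the smoothed representatives $\tilde u^k$ so that, simultaneously, (a) $M_\sigma^k$ reproduces the correct nodal value on $\Lambda_h^k$, (b) the \v{C}ech coboundary of $\{z_\sigma^k\}$ matches the $d$-action on shape functions forced by \textbf{(A2)}, and (c) the local solves used for $\tilde u^k$ intertwine properly under $d$. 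The generality of the DOFs (beyond the first-order Falk--Winther case) is what makes this step nontrivial and is the reason the \v{C}ech--de Rham double complex perspective is essential.
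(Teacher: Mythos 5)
There is a genuine gap at the central step, the commutativity of the $L$-parts. Your plan treats $\pi^0,\pi^1,\pi^2$ symmetrically (each is a weight-function part plus $L_\sigma^k$ applied to a locally smoothed representative) and then hopes to \emph{verify} $\curl\pi^0=\pi^1\curl$ afterwards, arguing that \eqref{eq:assumption} ``ensures no cross-contamination'' of the $L_\sigma$-terms. But Assumption {\bf(A2)} only says $f^1_{\sigma,i}(\curl\varphi_{\bm x})=0$ for the basis functions $\varphi_{\bm x}$ dual to the vertex values; it says nothing about $\curl$ of the basis functions $\phi^0_{\sigma,i}$ produced by $L_\sigma^0$, and in general $\curl(L_\sigma^0 w)$ does have nonzero $f^1_{\sigma',i}$-content. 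So no cancellation of the kind you invoke is available, and the compatibility conditions (a)--(c) that you correctly list as the ``main obstacle'' are exactly the unproved core of the theorem: your proposal restates the problem rather than solving it.

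The paper resolves this with a different, sequential mechanism that your outline does not contain. Using {\bf(A3)} and the splitting $\cQ^0_\omega u=\cM_\omega u+\cR^1_\omega\curl u$, the $L$-part of $\pi^0$ is $L_\sigma^0\cR^1_\sigma\curl u$, i.e.\ it already depends on $u$ only through $\curl u$; hence one can \emph{define} an auxiliary operator $\widehat\pi^1\bm v=\sum_\sigma\curl L_\sigma^0\cR^1_\sigma\bm v+\sum_{\bm x}(\cR^1_{\bm x}\bm v)(\bm x)\curl\varphi_{\bm x}+\cM^1\bm v$, so that $\widehat\pi^1\curl=\curl\pi^0$ holds by construction (the weight-function identity $\curl\cM^0=\cM^1\curl$ of \Cref{prop:dblcmplx} handles only the lowest-order part, which is the sole place {\bf(A2)} enters). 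Since $\widehat\pi^1$ is not a projection, it is corrected by $\sum_\sigma L_\sigma^1(I-\widehat\pi^1)\cQ^1_{\sigma,[1]}\bm v+\sum_{\bm e}((I-\widehat\pi^1)\cQ^1_{\bm e,[1]}\bm v,\bm n)_{\bm e}\varphi_{\bm e}$; the decisive point, absent from your proposal, is that the splitting $\cQ^1_{\omega}=\curl\cR^1_{\omega}+\cR^2_{\omega}\div$ together with $\pi^0$ reproducing the discrete function $\cR^1_{\sigma,[1]}\bm v$ shows these correction terms equal $L_\sigma^1(I-\widehat\pi^1)\cR^2_{\sigma,[1]}\div\bm v$ etc., i.e.\ they depend only on $\div\bm v$ and therefore vanish when $\bm v=\curl u$, preserving commutativity while restoring the projection property. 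The operator $\pi^2$ is then obtained by formally applying $\div$ to this formula, with its projection property coming from the surjectivity of $\div:\Lambda_h^1\to\Lambda_h^2$ (Assumption {\bf(A1)} on patches). Without this two-step ``differentiate, then correct with terms depending only on the exterior derivative of the argument'' structure, your construction gives no reason why the diagram should commute, so the proof as proposed is incomplete; the locality and scaling arguments for the bounds are fine in outline but rest on the missing construction.
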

At the end of this subsection, the lowest order finite element de Rham complex without extra smoothness in 2D will be introduced~\cite{2006ArnoldFalkWinther,2010ArnoldFalkWinther}:
\begin{equation*}
	\R \xrightarrow{\subset} \hat{\Lambda}^0_{h}\xrightarrow[]{\operatorname{curl}} \hat{\Lambda}^1_{h} \xrightarrow[]{\operatorname{div}} \hat{\Lambda}^2_{h}\xrightarrow{}0.
\end{equation*}
Here $\hat{\Lambda}^0_{h} = \mathbf{P}_1 \subset H^1(\Omega)$ denotes the corresponding space of continuous piecewise linear functions and $\hat{\Lambda}^2_{h} = \mathbf{P}_0^- \subset L^2(\Omega)$ denotes the space of piecewise constants. The space $\hat{\Lambda}^1_{h} = \mathbf{RT} \subset H(\operatorname{div},\Omega; \mathbb R^2)$ is the lowest order Raviart--Thomas space (cf. \cite{1977RaviartThomas}). This case can be also encompassed in the framework proposed in this subsection.

\subsection{Three-dimensional Results}

For three dimensions, suppose that a contractible polyhedral domain $\Omega$ and a triangulation $\mathcal T$ are given. Denote by $\mathsf V$ the set of vertices, $\mathsf E$ the set of edges, $\mathsf F$ the set of faces, $\mathsf K$ the set of cells, and $\mathsf S = \mathsf V\cup \mathsf E \cup \mathsf F \cup \mathsf K$ the set of all simplices. The letters $\bm x$, $\bm e$, $\bm f$, $\bm K$ are used to represent any vertex, edge, face, and tetrahedron of the mesh $\mathcal T$, respectively.

Similar to the two-dimensional case, the following expressions hold,
$$u = \sum_{\sigma} L_{\sigma}^0 u + \sum_{\bm x} u(\bm x) \varphi_{\bm x}, \quad \bm \xi = \sum_{\sigma} L_{\sigma}^1 \bm \xi + \sum_{\bm e} (\bm \xi \cdot \bm t, 1)_{\bm e} \varphi_{\bm e},$$
$$ \bm v = \sum_{\sigma} L_{\sigma}^2 \bm v + \sum_{\bm f} (\bm v\cdot \bm n,1)_{\bm f} \varphi_{\bm f},\quad p = \sum_{\sigma} L_{\sigma}^3 p + \sum_{\bm K}(p, 1)_K \varphi_{K},$$
for $u \in \Lambda_h^0, \bm \xi \in \Lambda_h^1, \bm v\in \Lambda_h^2,$ and $p \in \Lambda_h^3$,
where the collections of degrees of freedom $L_{\sigma}^l$, $l = 0,1,2,3$, are defined in a similar way as those for the two-dimensional case, $\bm t$ is the unit tangential vector for edge $\bm e \in \mathsf E$, and $\bm n$ is the unit normal vector for face $\bm f \in \mathsf F$.
Here are the assumptions for the three-dimensional main result of this paper.

\begin{itemize}
	\item[{\bf (B1)}]
The finite element sequence
\begin{equation}
	\R \xrightarrow{\subset} \Lambda^0_{h}\xrightarrow[]{\operatorname{grad}} \Lambda^1_{h} \xrightarrow[]{\operatorname{curl}} \Lambda^2_{h} \xrightarrow[]{\operatorname{div}} \Lambda^3_{h}\xrightarrow{}0
\end{equation} is an exact complex. The exactness also holds when restricted on the patch $\omega_{\sigma}^{h}$ and $\omega_{\sigma}^{[\ell]}$, namely, the sequence 

\begin{equation}
	\R \xrightarrow{\subset} \Lambda^0_{h}(\omega)\xrightarrow[]{\operatorname{grad}} \Lambda^1_{h}(\omega) \xrightarrow[]{\operatorname{curl}} \Lambda^2_{h}(\omega) \xrightarrow[]{\operatorname{div}} \Lambda^3_{h}(\omega) \xrightarrow{}0
\end{equation}
is exact for the patch $\omega =\omega_{\sigma}^{h}$ or $ \omega_{\sigma}^{[\ell]}$ for any $\ell \ge 0$ and $\sigma \in \mathsf S$.

	\item[{\bf (B2)}] It holds that $\grad \varphi_{\bm x} \in \Span \{ \varphi_{\bm e}, \bm e \in \mathsf E\}$, and $\curl \varphi_{\bm e} \in \Span  \{ \varphi_{\bm f}, \bm f \in \mathsf F\}$, and $\div \varphi_{\bm f} \in \Span\{\varphi_{\bm K}, \bm K \in \mathsf K\}.$ This indicates that
	      \begin{equation}\label{eq:assumption3d}f_{\sigma,i}^1(\grad \varphi_{\bm x}) = 0,\,\,f_{\sigma,i}^2(\curl \bm \varphi_{\bm e}) = 0, \,\, f_{\sigma,i}^3(\div \varphi_{\bm f}) = 0\,\text{ for all simplices }\sigma \in \mathsf S.
	      \end{equation}
	\item[{\bf (B3)}] It holds for the collection $L_{\sigma}^0$ of the degrees of freedom that $L_{\sigma}^0(c) = 0$ for $c \in \R$ and $\sigma \in \mathsf S$.
	\item[{\bf (B4)}] Given $u \in \Lambda_h^k$, the value of $L_{\sigma}^ku$ only depends on the value of $u$ on $\omega_{\sigma}$, and the support of $L_{\sigma}^k u$ and $\varphi_{\sigma}$ is a subset of $\omega_{\sigma}.$
\end{itemize}

\begin{theorem}
	\label{thm:main-3d}
	Under the above assumptions {\bf(B1)}--{\bf(B4)}, there exist operators $\pi^k$, $k = 0,1,2,3$, which are projection operators from $\Lambda^k$ to $\Lambda_h^k$, such that the following diagram commutes.
	\begin{equation}
		\label{eq:cd3}
		\xymatrix{
		\R  \ar[r] & H^1(\Omega) \ar[d]_-{\pi^0} \ar[r]^-{\grad} & H(\curl,\Omega;\mathbb R^3) \ar[d]_-{\pi^1} \ar[r]^-{\curl} & H(\div, \Omega; \mathbb R^3) \ar[d]_-{\pi^2} \ar[r]^-{\div} & L^2(\Omega) \ar[d]_-{\pi^3} \ar[r] & 0 \\
		\R \ar[r] & \Lambda_{h}^0 \ar[r]^-{\grad} &\Lambda_{h}^1 \ar[r]^-{\curl} & \Lambda_{h}^2 \ar[r]^-{\div} & \Lambda_{h}^3 \ar[r] & 0
		}
	\end{equation}
	Given $u \in \Lambda^k$, the value of $\pi^k u$ is locally determined. Here $\Lambda^0 = H^1(\Omega)$, $\Lambda^1 = H(\curl,\Omega;\mathbb R^3)$, $\Lambda^2 = H(\div,\Omega;\mathbb R^3)$, $\Lambda^3 = L^2(\Omega)$.

	If moreover, $\mathcal T$ is shape-regular, and the shape function space and degrees of freedom in each element are affine-interpolant equivalent to each other, then the projection operators are local bounded, i.e.,
	$$\|\pi^0 u\|_{H^1(\bm K)} \le C\|u\|_{H^1(\omega^{[1]}_{\bm K})},\,\, \|\pi^1 \bm \xi\|_{H(\curl,\bm K)} \le C\|\bm \xi\|_{H(\curl,\omega^{[2]}_{\bm K})},$$
	$$\|\pi^2 \bm v\|_{H(\div, \bm K)} \le C\|\bm v\|_{H(\div, \omega_{\bm K}^{[3]})},\,\, \|\pi^3 p\|_{L^2(\bm K)} \le C\|p\|_{L^2(\omega_{\bm K}^{[3]})},$$
	for any $u \in \Lambda^0, \bm \xi \in \Lambda^1, \bm v \in \Lambda^2$ and $p \in \Lambda^3$.
	Here the constant $C$ only depends on the shape regularity and the reference finite element. As a consequence, all the operators are globally bounded, i.e., $\pi^0$ is $H^1$ bounded, $\pi^1$ is $H(\curl)$ bounded, $\pi^2$ is $H(\div)$ bounded and $\pi^3$ is $L^2$ bounded.

\end{theorem}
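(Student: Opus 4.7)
The plan is to build $\pi^k$ by generalizing the Falk--Winther construction \cite{2014FakWinther,2015FalkWinther} to the smoother subcomplex allowed by (B1)--(B4). Each projection will have the shape
\[
\pi^k u = \sum_{\tau}(u, z_\tau^k)_\Omega\, \varphi_\tau + \sum_\sigma L_\sigma^k(\cR^k u),
\]
where the first sum runs over the simplex class carrying the ``single'' global degree of freedom (vertices for $k=0$, edges for $k=1$, faces for $k=2$, cells for $k=3$), the weights $z_\tau^k$ are $L^2$-duals of the basis functions $\varphi_\tau$ supported in controlled patches of $\tau$, and $\cR^k$ is a locally-defined post-processor that converts the Sobolev-space argument into an element of $\Lambda_h^k$ on each patch so that the local functional $L_\sigma^k$ can be applied.

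\emph{Step 1: Weight functions.} I first construct $z_{\bm x}^0$, supported in an extended patch of $\bm x$, satisfying $(u, z_{\bm x}^0)_\Omega = u(\bm x)$ for every $u \in \Lambda_h^0$; this uses the local exactness (B1) to solve an auxiliary problem on the patch. Then, cascading with (B1) and (B2), I construct weights $z_{\bm e}^1, z_{\bm f}^2, z_{\bm K}^3$ dual to $\varphi_{\bm e}, \varphi_{\bm f}, \varphi_{\bm K}$ and satisfying the \v{C}ech--de Rham compatibility relations
\[
\grad z_{\bm x}^0 = -\sum_{\bm e\ni \bm x}\alpha_{\bm x,\bm e}\, z_{\bm e}^1,\qquad \curl z_{\bm e}^1 = \sum_{\bm f \supset \bm e}\beta_{\bm e,\bm f}\, z_{\bm f}^2,\qquad \div z_{\bm f}^2 = \sum_{\bm K \supset \bm f}\gamma_{\bm f,\bm K}\, z_{\bm K}^3,
\]
with coefficients that match those produced by the expansions forced by (B2). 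The iterated extended patch hypothesis in (B1) is what keeps all supports controlled while these local problems are solved recursively.

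\emph{Step 2: Projection and commutativity.} Reproduction on $\Lambda_h^k$ follows by substituting the identity expansions displayed just before (B1)--(B4) and using the $L^2$-duality of $z_\tau^k$ with $\varphi_\tau$ together with the fact that $L_\sigma^k$ vanishes on $\varphi_\tau$ for $\tau \neq \sigma$ (ensured by assumption (B4) on support and by (B3) for the constant mode). Commutativity $d \pi^k = \pi^{k+1} d$ reduces, after integration by parts against the weights, to the \v{C}ech--de Rham identities from Step 1; assumption (B2) then guarantees that $d$ applied to the basis functions $\varphi_\tau$ stays inside the span of the next-level basis, so the $L_\sigma^k$ summand interacts correctly with $d$ and commutation holds algebraically rather than merely approximately.

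\emph{Step 3: Locality and boundedness.} Locality is built in: each $z_\tau^k$ and each $L_\sigma^k$ has fixed patch support, and only finitely many such patches meet any given cell $\bm K$, which accounts for the nested patch sizes $\omega_{\bm K}^{[1]},\dots,\omega_{\bm K}^{[3]}$ stated in the theorem. For boundedness, on the reference element all weights and local functionals are bounded linear maps on a finite-dimensional space; affine-interpolant equivalence and shape regularity transport these estimates to every element with an $h$-independent constant, giving the stated norm bounds and hence the global boundedness in $H^1, H(\curl), H(\div), L^2$.

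The principal obstacle is Step 1: the weights must simultaneously be $L^2$-dual to the basis $\{\varphi_\tau\}$, supported in prescribed small patches, and satisfy \emph{exact} \v{C}ech--de Rham differential identities so that Step 2 yields algebraic commutativity. The strength of (B1) -- requiring exactness on every iterated extended patch $\omega_\sigma^{[\ell]}$ rather than only on the star $\omega_\sigma$ -- is precisely what allows this coupled system of local problems to be solved recursively without losing locality. Once this is set up, the remaining steps are essentially formal consequences of the dual/compatibility relations.
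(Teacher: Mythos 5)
There is a genuine gap, and it sits exactly where you flag ``the principal obstacle.'' Your Step 1 asks for weights $z_\tau^k$ that are simultaneously (i) supported in small patches, (ii) exactly $L^2$-dual to the basis functions over the whole finite element space (e.g.\ $(u,z_{\bm x}^0)_\Omega=u(\bm x)$ for \emph{all} $u\in\Lambda_h^0$), and (iii) linked by exact \v{C}ech--de Rham differential identities. No such construction is given, and it is not what the Falk--Winther weights provide: in the paper (Proposition \ref{prop:dblcmplx}), $(u,z^0_{\bm x})_{\omega_{\bm x}}$ is the patch \emph{average} of $u$, not $u(\bm x)$, so the operators $\cM^k$ are not projections and reproduce none of the higher-order degrees of freedom; they only satisfy the commuting diagram into $\Span\{\varphi_\sigma\}$. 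Producing locally supported weights that are exactly dual to a high-order nodal basis \emph{and} satisfy exact differential compatibility is essentially the original hard problem of local commuting projections; appealing to exactness of $\Lambda_h^\bullet(\omega_\sigma^{[\ell]})$ does not yield it, and if it could be done as stated the remaining argument would indeed be nearly formal --- which is a sign the difficulty has been displaced rather than resolved.

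Even granting Step 1, your Step 2 does not address the actual crux of commutativity: the interaction of the exterior derivative with the correction term $\sum_\sigma L_\sigma^k(\cR^k u)$. The \v{C}ech--de Rham identities only control the first sum (the $\cM^k$-type part); assumption (B2) guarantees $d\varphi_\tau$ stays in the next-level span, but says nothing about $d\bigl(L_\sigma^k(\cR^k u)\bigr)$ recombining into $\pi^{k+1}(du)$. In the paper this is handled by a recursive two-stage construction: one first defines $\widehat\pi^{k+1}$ by formally differentiating the formula for $\pi^k$ (so $\widehat\pi^{k+1}d=d\pi^k$ by design), and then corrects $\widehat\pi^{k+1}$ to a projection using the harmonic patch operators $\cQ^{k+1}_{\sigma,[\ell]}$; the key identity $(I-\widehat\pi^{k+1})\cQ^{k+1}_{\sigma,[\ell]}\bm\xi=(I-\widehat\pi^{k+1})\cR^{k+2}_{\sigma,[\ell]}d\bm\xi$ (which uses the decomposition $\cQ=d\cR+\cR d$, assumption (B3), and the commutation already proved one level down) shows the correction depends only on $d\bm\xi$, so commutativity survives the correction, and the patch enlargements $\omega^{[1]},\omega^{[2]},\omega^{[3]}$ arise from this recursion. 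Your single-shot formula with an unspecified post-processor $\cR^k$ contains no analogue of this mechanism, nor a proof of the projection property for the mixed sum, so as written the argument does not close; the boundedness discussion in Step 3 is fine in spirit but rests on the unproven construction.
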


Similarly, consider the lowest order finite element de Rham complex without extra smoothness in 3D\cite{2006ArnoldFalkWinther,2010ArnoldFalkWinther}:
\begin{equation*}
	\R \xrightarrow{\subset} \hat{\Lambda}^0_{h}\xrightarrow[]{\operatorname{grad}} \hat{\Lambda}^1_{h} \xrightarrow[]{\operatorname{curl}} \hat{\Lambda}^2_{h}\xrightarrow[]{\operatorname{div}} \hat{\Lambda}^3_{h}\xrightarrow{}0.
\end{equation*}
Here $\hat{\Lambda}^0_{h}\subset H^1(\Omega)$ denotes the corresponding space of continuous piecewise linear functions and $\hat{\Lambda}^3_{h}\subset L^2(\Omega)$ denotes the space of piecewise constants. The space $\hat{\Lambda}^1_{h}\subset H(\operatorname{curl},\Omega; \mathbb R^3)$ is the lowest order N\'{e}d\'{e}l{e}c element and $\hat{\Lambda}^2_{h}\subset H(\operatorname{div},\Omega; \mathbb R^3)$ is the lowest order Raviart--Thomas space.

\subsection{Double Complexes and Weight Functions}
This subsection briefly introduces the weight functions $z_{\sigma}^{k}$ in $\mathbb {R}^{D}(D=2,3)$~\cite{2014FakWinther,2015FalkWinther}, for $k = 0,1,\ldots,D$, and $\sigma \in \mathsf S$. Hereafter, denote by $\vmathbb{1}_{\omega}$ the indicator function of the domain $\omega$. 

For three dimensions, given $\bm x \in \mathsf V$, define $z_{\bm x}^0 =  \frac{\vmathbb{1}_{\omega_{x}}}{|\omega_{\bm x}|},$ consider $\mathcal M^0 u := \sum_{\bm x} (u, z_{\bm x}^0)_{\omega_{\bm x}}  \varphi_{\bm x}$ and $M_{\bm x} u:= (u, z_{\bm x}^0)_{\omega_{\bm x}}$.
Note that $M_{\bm x} u $ is the integral mean of $u$ on the patch $\omega_{\bm x}$.

By Assumption (B2), it holds that $\grad \mathcal M^0 u \in \Span_{\bm e}\{\varphi_{\bm e}\}$, that is,
\begin{equation}
	\begin{split}
		\grad \mathcal M^0 u = \sum_{\bm e} (\grad \mathcal M^0 u, \bm t)_{e} \varphi_{\bm e} 
		= \sum_{e} ( M_{\bm y}u -  M_{\bm x}u) \varphi_{\bm e} 
		= \sum_{e} (u,  \frac{\vmathbb{1}_{\omega_{\bm y}}}{|\omega_{\bm y}|} - \frac{\vmathbb{1}_{\omega_{\bm x}}}{|\omega_{\bm x}|})_{\bm \omega_e^{h}} \varphi_{\bm e},
	\end{split}
\end{equation}
where $\bm e = [\bm x,\bm y]$. 
Since the integral mean of $(\frac{\vmathbb{1}_{\omega_{\bm y}}}{|\omega_{\bm y}|} - \frac{\vmathbb{1}_{\omega_{\bm x}}}{|\omega_{\bm x}|})$ vanishes on $\omega_{\bm e}^{h}$, there exists $\bm z_{\bm e}^1 \in \hat{\Lambda}_{h}^2(\omega_{\bm e}^{h})$ such that 
$
	-\div \bm z_e^1 = (\frac{\vmathbb{1}_{\omega_{\bm y}}}{|\omega_{\bm y}|} - \frac{\vmathbb{1}_{\omega_{\bm x}}}{|\omega_{\bm x}|})$
and 
	$\bm z_{\bm e}^1\cdot \bm n = 0
	\text{ on the boundary of }
	\omega_{\bm e}^{h}.$ 
Here $\bm n$ is the unit normal vector on the boundary. 
As a result, it follows that $$\grad \mathcal M^0 u = \sum_{\bm e} (u, -\div z_{\bm e}^1)_{\omega_{\bm e}^{h}} \varphi_{\bm e} = \sum_{\bm e} (\grad u, z_{\bm e}^1)_{\omega_{\bm e}^{h}} \varphi_{\bm e}. $$

This motivates to define $\mathcal M^1 \bm \xi = \sum_{\bm e}(\bm \xi, z_{\bm e}^1)_{\omega_{\bm e}^{h}} \varphi_{\bm e}$ for $\bm \xi \in L^2(\Omega; \mathbb R^2)$.  Then it holds that 
\begin{equation}\label{eq:M1vn}(\mathcal M^1 \bm \xi, \bm t)_{\bm e} = (\bm \xi, z_{\bm e}^1)_{\omega_{\bm e}^{h}},\end{equation}
for any edge $\bm e$ with the unit tangential vector $\bm t$.
Similarly, it follows from Assumption (B2) that
\begin{equation}
	\begin{split}
		\curl \mathcal M^1 \bm \xi 
		= &
		\sum_{\bm f} (\curl \mathcal M^1 \bm \xi, \bm n)_{\bm f} \varphi_{\bm f} 
		\\ = & 
		\sum_{\bm f} \sum_{\bm e \subset \bm f } (\mathcal M^1 \bm \xi, \bm t)_{\bm e} \varphi_{\bm f} 
		\\ = &
		\sum_{\bm f} (\bm \xi, \bm z_{[a,b]}^1 \vmathbb{1}_{\omega_{[a,b]}^{h}} + \bm z_{[b,c]}^1 \vmathbb{1}_{\omega_{[b,c]}^{h}} + \bm z_{[c,a]}^1 \vmathbb{1}_{\omega_{[c,a]}^{h}})_{\omega_{\bm f}^{h}}\varphi_{\bm f}.
	\end{split}
\end{equation}
Here $\bm f = [a,b,c]$, and $[a,b], [b,c], [c,a]$ are the three edges of $\bm f$, and the last line comes from \eqref{eq:M1vn}.
Since $\bm z_{[a,b]}^1 \vmathbb{1}_{\omega_{[a,b]}^{h}} + \bm z_{[b,c]}^1 \vmathbb{1}_{\omega_{[b,c]}^{h}} + \bm z_{[c,a]}^1 \vmathbb{1}_{\omega_{[c,a]}^{h}}\in \hat{\Lambda}_{h}^2(\omega_{\bm f}^{h})$ is  divergence free and the normal component vanish on the boundary of $\omega_{\bm f}^{h}$, there exists  $z_{\bm f}^2 \in \hat{\Lambda}_h^1(\omega_{\bm f}^{h})$ such that $$\curl z_{\bm f}^2 = \bm z_{[a,b]}^1 \vmathbb{1}_{\omega_{[a,b]}^{h}} + \bm z_{[b,c]}^1 \vmathbb{1}_{\omega_{[b,c]}^{h}} + \bm z_{[c,a]}^1 \vmathbb{1}_{\omega_{[c,a]}^{h}},$$
and $z_{\bm f}^2 \times \bm n$ vanishes on the boundary of the patch $\omega_{\bm f}^{h}$.
As a consequence,
$$\curl \mathcal M^1 \bm \xi = \sum_{f} (\curl \bm \xi, z_{\bm f}^2)_{\omega_{\bm f}^{h}} \varphi_{\bm f}.$$

This motivates to define $\cM^2 \bm v = \sum_{\bm f}(\bm v, z_{\bm f}^2)_{\omega_{\bm f}^h} \varphi_{\bm f}$ for $\bm v \in L^2(\Omega; \mathbb R^3)$. A similar argument yields that 
\begin{equation}
	\begin{split}
		\div \mathcal M^2 \bm v 
		= &
		\sum_{\bm K} (\div \mathcal M^2 \bm v, 1)_{\bm K} \varphi_{\bm K} 
		\\ = & 
		\sum_{\bm K} \sum_{\bm f \subset \bm K} (\mathcal M^2 \bm v, \bm n)_{\bm f} \varphi_{\bm K} 
		\\ = &
		\sum_{\bm K} (\bm v, \bm z_{[a,b,c]}^2 \vmathbb{1}_{\omega_{[a,b,c]}^{h}} + \bm z_{[b,c,d]}^2 \vmathbb{1}_{\omega_{[b,c,d]}^{h}} + \bm z_{[c,d,a]}^2 \vmathbb{1}_{\omega_{[c,d,a]}^{h}} + \bm z_{[d,a,b]}^2 \vmathbb{1}_{\omega_{[d,a,b]}^{h}})_{\omega_{\bm K}^{h}}\varphi_{\bm K},
	\end{split}
\end{equation}
where the element $\bm K = [a,b,c,d]$. Since $\bm z_{[a,b,c]}^2 \vmathbb{1}_{\omega_{[a,b,c]}^{h}} + \bm z_{[b,c,d]}^2 \vmathbb{1}_{\omega_{[b,c,d]}^{h}} + \bm z_{[c,d,a]}^2 \vmathbb{1}_{\omega_{[c,d,a]}^{h}} + \bm z_{[d,a,b]}^2 \vmathbb{1}_{\omega_{[d,a,b]}^{h}}$ is curl free, there exists $z_{\bm K}^3 \in \hat{\Lambda}_{h}^0(\omega_{\bm K}^{h})$ with $z_{\bm K}^3=0$ on the boundary of $\omega_{\bm K}^{h}$ such that 
$$-\grad z_{\bm K}^3 = \bm z_{[a,b,c]}^2 \vmathbb{1}_{\omega_{[a,b,c]}^{h}} + \bm z_{[b,c,d]}^2 \vmathbb{1}_{\omega_{[b,c,d]}^{h}} + \bm z_{[c,d,a]}^2 \vmathbb{1}_{\omega_{[c,d,a]}^{h}} + \bm z_{[d,a,b]}^2 \vmathbb{1}_{\omega_{[d,a,b]}^{h}},$$
and therefore $\div \mathcal M^2 \bm v  = \sum_{\bm K} (\div \bm v, z_{\bm K}^3)_{\omega_{\bm K}^{h}} \varphi_{\bm K}$.

The property of the weight functions $z_{\sigma}^k (\sigma \in \mathbf S, k = 0,1,2 \text{ in 2D }, k = 0,1,2,3 \text{ in 3D})$ is summarized here.

		\begin{proposition}[\cite{2014FakWinther,2015FalkWinther}]
		\label{prop:dblcmplx}

		For three dimensions, let $\mathcal M^0 u = \sum_{\bm x} (u, z^0_{\bm x})_{\omega_{\bm x}} \varphi_{\bm x}$, $\mathcal M^1 \bm \xi = \sum_{\bm e} (\bm \xi, \bm z_{\bm e}^1)_{\omega_{\bm e}^{h}} \varphi_{\bm e},$ $\mathcal M^2 \bm v = \sum_{f} (\bm v, \bm z_{\bm f}^2)_{\omega_{f}^{h}} \varphi_{\bm f}$ and $\mathcal M^3 p = \sum_{\bm K}(p, z_{\bm K}^3)_{\omega_{\bm K}^{h}} \varphi_{\bm K}$ be defined as above. Then the following diagram commutes.
		\begin{equation}
			\label{eq:cd3ske}
			\xymatrix{
			\R  \ar[r] & H^1(\Omega) \ar[d]_-{\mathcal M^0} \ar[r]^-{\grad} & H(\curl,\Omega;\mathbb R^3) \ar[d]_-{\mathcal M^1} \ar[r]^-{\curl} & H(\div,\Omega;\mathbb R^3) \ar[d]_-{\mathcal M^2} \ar[r]^-{\div} & L^2(\Omega) \ar[d]_-{\mathcal M^3} \ar[r] & 0 \\
			\R \ar[r] & \Span\{\varphi_{\bm x}\} \ar[r]^-{\grad} & \Span\{\varphi_{\bm e}\}  \ar[r]^-{\curl} & \Span\{\varphi_{\bm f}\} \ar[r]^-{\div} & \Span\{\varphi_{\bm K}\} \ar[r] & 0
			}
		\end{equation}
		For two dimensions, there exist weighted functions $z_{\bm x}^0 = \frac{1}{|\omega_{\bm x}|} \vmathbb{1}_{\omega_{\bm x}} \in L^2(\omega_{\bm x})$, $z_{\bm e}^1 \in L^2(\omega_{\bm e}^h)$, and $z_{\bm f}^2 \in L^2(\omega_{\bm f}^h)$, such that the following result holds. Let $\mathcal M^0 u = \sum_{\bm x} (u, z^0_{\bm x})_{\omega_{\bm x}} \varphi_{\bm x}$, $\mathcal M^1 \bm v = \sum_{\bm e} (\bm v, \bm z_{\bm e}^1)_{\omega_{\bm e}^{h}} \varphi_{\bm e},$ $\mathcal M^2 p = \sum_{f} (p, z_{\bm f}^2)_{\omega_{f}^{h}} \varphi_{\bm f}$ be defined similarly. Then the following diagram commutes.
\begin{equation}
	\label{eq:cd2ske}
	\xymatrix{
		\R  \ar[r] & H^1(\Omega) \ar[d]_-{\cM^0}\ar[r]^-{\curl} & H(\div, \Omega;\mathbb R^2) \ar[d]_-{\cM^1} \ar[r]^-{\div} & L^2(\Omega) \ar[d]_-{\cM^2} \ar[r] & 0 \\
		\R \ar[r] & \Span\{\varphi_{\bm x}\} \ar[r]^-{\curl} &\Span\{\varphi_{\bm e}\}\ar[r]^-{\div} & \Span\{\varphi_{\bm f}\}\ar[r] & 0
	}
\end{equation}

	\end{proposition}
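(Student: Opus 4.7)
The plan is to formalize the inductive construction already sketched in the paragraphs preceding the statement: build the weight functions $z_{\sigma}^k$ level by level, checking at each step that the $k$-th square of the diagram commutes before advancing to level $k{+}1$. I will describe the three-dimensional case explicitly; the two-dimensional case proceeds by exactly the same argument with one fewer step, so at the end I will only point out the substitutions $\grad,\curl,\div \leadsto \curl,\div$.

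At level $k=0$, set $z_{\bm x}^0 := \vmathbb{1}_{\omega_{\bm x}}/|\omega_{\bm x}|$, so $\cM^0 u = \sum_{\bm x} M_{\bm x}(u)\varphi_{\bm x}$. By {\bf (B2)}, $\grad\varphi_{\bm x}\in\Span\{\varphi_{\bm e}\}$, so $\grad\cM^0 u=\sum_{\bm e}c_{\bm e}(u)\varphi_{\bm e}$, where $c_{\bm e}(u)=\int_{\bm e}\grad\cM^0 u\cdot\bm t$ telescopes, for $\bm e=[\bm x,\bm y]$, to $M_{\bm y}(u)-M_{\bm x}(u)$. The bracketed function $\eta_{\bm e}:=\vmathbb{1}_{\omega_{\bm y}}/|\omega_{\bm y}|-\vmathbb{1}_{\omega_{\bm x}}/|\omega_{\bm x}|$ is supported in $\omega_{\bm e}^h$ and integrates to zero there. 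Since $\omega_{\bm e}^h$ is a contractible triangulated patch, the exactness of the lowest-order finite element de Rham complex on $\omega_{\bm e}^h$ with homogeneous normal trace on $\partial\omega_{\bm e}^h$ furnishes $\bm z_{\bm e}^1\in\hat\Lambda_h^2(\omega_{\bm e}^h)$ with $\bm z_{\bm e}^1\cdot\bm n=0$ on $\partial\omega_{\bm e}^h$ and $-\div\bm z_{\bm e}^1=\eta_{\bm e}$. Integration by parts rewrites $c_{\bm e}(u)=(u,-\div\bm z_{\bm e}^1)_{\omega_{\bm e}^h}=(\grad u,\bm z_{\bm e}^1)_{\omega_{\bm e}^h}$, which is the desired coefficient of $\cM^1\grad u$, giving commutativity of the first square.

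Iterating: having fixed $\bm z_{\bm e}^1$, compute $\curl\cM^1\bm\xi$ on a face $\bm f=[a,b,c]$. By {\bf (B2)} and the discrete Stokes identity already recorded in the excerpt, the coefficient of $\varphi_{\bm f}$ equals the $L^2$ pairing of $\bm\xi$ against $\bm\zeta_{\bm f}:=\bm z_{[a,b]}^1\vmathbb{1}_{\omega_{[a,b]}^h}+\bm z_{[b,c]}^1\vmathbb{1}_{\omega_{[b,c]}^h}+\bm z_{[c,a]}^1\vmathbb{1}_{\omega_{[c,a]}^h}$. A direct check using $-\div\bm z_{\bm e}^1=\eta_{\bm e}$ shows $\div\bm\zeta_{\bm f}=0$ on $\omega_{\bm f}^h$ (the signed $\eta_{\bm e}$ around a face cancel at every vertex) and $\bm\zeta_{\bm f}\cdot\bm n=0$ on $\partial\omega_{\bm f}^h$; contractibility of $\omega_{\bm f}^h$ together with local exactness of the lowest-order complex with vanishing tangential trace then delivers $\bm z_{\bm f}^2\in\hat\Lambda_h^1(\omega_{\bm f}^h)$ with $\curl\bm z_{\bm f}^2=\bm\zeta_{\bm f}$ and $\bm z_{\bm f}^2\times\bm n=0$ on $\partial\omega_{\bm f}^h$. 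Integration by parts closes the middle square. One more repetition of the same pattern, now using that around a tetrahedron $\bm K=[a,b,c,d]$ the assembled vector field is curl-free with vanishing tangential trace, produces a scalar potential $z_{\bm K}^3\in\hat\Lambda_h^0(\omega_{\bm K}^h)$ with $z_{\bm K}^3=0$ on $\partial\omega_{\bm K}^h$, and closes the last square.

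The only point that really carries content is the \emph{local exactness with homogeneous boundary conditions} of the lowest-order finite element de Rham complex on each contractible patch $\omega_\sigma^h$; this is the standard cohomology computation for first-order Whitney forms on a simplicially triangulated contractible set and is precisely the ingredient invoked in \cite{2014FakWinther,2015FalkWinther}. Everything else is bookkeeping: {\bf (B2)} is what lets each $d\cM^k u$ be expanded as a sum over $(k{+}1)$-simplices, and the integration by parts against the newly constructed $z_\sigma^{k+1}$ turns that expansion into $\cM^{k+1}du$. The two-dimensional case is identical after deleting the curl level and replacing the outer two differentials by $\curl$ and $\div$, and yields $z_{\bm e}^1$ and $z_{\bm f}^2$ with the analogous support, boundary, and defining-equation properties.
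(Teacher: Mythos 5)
Your construction is exactly the one the paper itself relies on: it builds $\bm z_{\bm e}^1$, $\bm z_{\bm f}^2$, $z_{\bm K}^3$ level by level from local exactness (with homogeneous traces) of the lowest-order Whitney complex on the contractible patches $\omega_{\sigma}^h$, uses Assumption (B2) to expand each $d\,\cM^k$ in the next-level basis, and closes each square by integration by parts, then transfers the argument to 2D — which is precisely the discussion preceding the proposition that the paper's proof (citing Falk--Winther) points to. So the proposal is correct and takes essentially the same approach, just written out in more detail.
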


	\begin{proof}
		The case of three dimensions has been thoroughly discussed, and the same argument can be applied to the two-dimensional case as well. For more information, the readers can refer to \cite{2014FakWinther,2015FalkWinther} for detailed explanations.
	\end{proof}

	Notice that in the simplest Lagrange--Raviart--Thomas finite element complex (or so-called the Whitney form), the interpolation operator can be recovered from the above proposition, see \Cref{sec:lrt} below. The main result of this paper can be regarded as a generalization of the above proposition to the more general cases where other degrees of freedom are involved.

\section{Proof of Main Results}
\label{sec:proof}
In this section, we present the proof of \Cref{thm:main-3d}. The proof of \Cref{thm:main-2d} follows a similar argument and for the sake of completeness, we provide a detailed proof in \Cref{sec:proof-2d}. Our approach extends the method used in \cite{2021ArnoldGuzman,2015FalkWinther}, but with additional technical details.

\subsection{Auxiliary Operators: An Abstract View}
\label{sec:auxproj}
The auxiliary operators used in this section are either exactly or closely related to the following harmonic projection, which is frequently used in the construction of interpolation operators, global or local. This definition is also useful in the following two sections, where it is employed to define the degrees of freedom. Consider the following diagram (where each space is a Hilbert space),

\begin{equation}
	\label{eq:cdharmproj}
	\xymatrix{
	&  & \widetilde{Y} \ar[r]_-{d'}\ar@<-1mm>[d]_-{\mathcal H} & \widetilde Z & \\
	0 \ar[r] & X \ar[r]_-{d} &Y \ar[r]_-{d'}\ar@<-1mm>[u]_-{\iota}  &Z\ar[r] \ar[u]_-{\iota} & 0,
	}
\end{equation}
where $d : X \to Y$ and $d': \widetilde Y \to \widetilde Z$ are two bounded operators such that the lower row is short exact (meaning that $d$ is injective, $d'$ is surjective). The inclusion $\iota$ means that $\widetilde{Y} \subset Y$ and $\widetilde Z \subset Z$, and the upper row implies $d'(Y) \subset Z$. The harmonic projection operator $\mathcal H: \widetilde Y \to Y$ is defined as the projection operator with respect to the following inner product
$$\langle\!\langle u, v \rangle\!\rangle_{Y} = (\cP_{d(X)} u, \cP_{d(X)} v)_Y + (d' u, d' v)_Y, \quad\forall u, v \in Y,$$
where $\mathcal P_{d(X)}$ is the $L^2$ projection operator from $Y$ to the image $d(X)$, and $(\cdot,\cdot)_Y$ is the inner product on $Y$.
More precisely, given $u \in \widetilde{Y}$, the element $\cH u \in Y$ is determined by the following variational problems,
\[
\begin{cases}
	(\cH u, d \varphi)_{Y} = (u, \iota (d\varphi))_{\widetilde Y}, \quad \forall \varphi \in X, \\
	( d' \cH u,  d' v)_{Z} = ( d'u, \iota (d' v))_{\widetilde Z}, \quad \forall v \in Y.
\end{cases}
\]

Here the inclusion operator will be omitted if it is clear from the context. It follows from the exactness of $X\to Y \to Z$ that the bilinear form $\langle\!\langle u, v \rangle\!\rangle_{Y}$ is indeed an inner product. As a result, the harmonic projection operator $\cH$ is well-defined, and $\cH u = u$ for $u \in Y$.
Moreover, the following decomposition holds:
\begin{equation}
	\cH u = d \cH^{-} u + \cH^{+} d'u,
\end{equation}
where $\cH^- u : \widetilde Y \to X$ is defined by
\begin{equation}
	(d\cH^{-} u, d \varphi)_{Y} = (u, \iota (d\varphi))_{\widetilde Y}, \quad \forall \varphi \in X,
\end{equation}
and $\cH^+ \eta : \widetilde Z \to Y$ is defined as the unique function $\mathcal H^+ \eta \in Y$ such that $\cH^+\eta \perp d(X)$ and
\begin{equation}
	\mathcal (d'\cH^{+} \eta, d'v)_{Z} = (\eta, \iota( d'v))_{\widetilde Z}, \quad \forall \varphi \in Y.
\end{equation}
It follows from the short exactness that both $\cH^-$ and $\cH^+$ are well-posed, see \eqref{eq:cdharmprojsplitting} for an illustration. 

\begin{equation}
	\label{eq:cdharmprojsplitting}
	\xymatrix{
	&  & \widetilde{Y}\ar@<-1mm>@{.>}[dl]_-{\mathcal H^-} \ar[r]^-{d'}  &\widetilde Z \ar@<-1mm>@{.>}[dl]^-{\mathcal H^+}  & \\
	0 \ar[r] & X \ar[r]_-{d} &Y \ar[r]_-{d'}\ar[u]_-{\iota}  &Z \ar[r] & 0,
	}
\end{equation}

Next, consider the following exact (cochain) sequence, (here $X^k \subset \widetilde X^k$ for all $k \ge 0$, and the inclusion relationship is omitted in the diagram) which can be regarded as stacking multiple diagrams like \eqref{eq:cdharmprojsplitting} together,

\begin{equation}
	\label{eq:cd:splitproj-long}
	\xymatrix{
	& \widetilde{X}^0 \ar[r]^-{d^0}\ar@<-1mm>@{.>}[dl]|-{\mathcal R^0} \ar@<-1mm>[d]|-{\mathcal Q^0} & \widetilde{X}^1\ar@<-1mm>@{.>}[dl]|-{\mathcal R^1}\ar[r]^-{d^1}\ar@<-1mm>[d]|-{\mathcal Q^1} &\widetilde X^2 \ar@<-1mm>@{.>}[dl]|-{\mathcal R^2} \ar[r]^-{d^2} \ar@<-1mm>[d]|-{\mathcal Q^2}&  \widetilde X^3\ar@<-1mm>@{.>}[dl]|-{\mathcal R^3}\ar[r]^-{d^3} \ar@<-1mm>[d]|-{\mathcal Q^3}& \cdots \ar@<-1mm>@{.>}[dl]\\
	X^{-1} \ar[r]_-{d^{-1}} & X^0 \ar[r]_-{d^0} & X^1 \ar[r]_-{d^1}  & X^2 \ar[r]_-{d^2}& X^3 \ar[r]_-{d^3}& \cdots,
	}
\end{equation}
where the projection operator $\cR^{k}: \widetilde X^{k} \to X^{k-1}$ is defined by
\begin{equation}\label{eq:defabs-Rk}\cR^{k} u^k \perp d^{k-2}(X^{k-2}), \,\,\text{and}\,\, (d^{k-1}\cR^k u^k, d^{k-1} v^{k-1})_{X_{k}} = (u^k, d^{k-1}v^{k-1})_{\widetilde X^{k}}, \forall v^{k-1} \in X^{k-1}\end{equation}
and the projection operator $\cQ^k : \widetilde X_k \to X_k$ is defined as
\begin{equation}\label{eq:defabs-Qk}\cQ^k u^k = d^{k-1} \cR^{k} u^k + \cR^{k+1} d^{k} u^k, \,\, \forall u^k \in \widetilde X^k.\end{equation}
It follows that $\cQ^k u^k = u^k$ for $u^k \in X_k$.

\subsection{The Construction of Projection Operators in Three Dimensions}

\subsubsection{A family of auxiliary operators}
The auxiliary operators can be specified 
	with $\widetilde X^0 = H^1(\omega)$, $\widetilde X^1 = H(\curl,\omega;\mathbb R^3)$, $\widetilde X^2 = H(\div,\omega;\mathbb R^3)$, $\widetilde X^3 = L^2(\omega)$, $X^{-1} = \mathbb R$, $X^{k} = \Lambda_h^k(\omega)$, $d^{-1} = \subset$, $d^0 = \grad$, $d^1 = \curl$, $d^2 = \div$. 
	This results in the following diagram,
\begin{equation}
	\label{eq:longdiagram}
	\xymatrix{
	& H^1(\omega)\ar@<-1mm>@{.>}[dl]|-{\cR^0_{\omega}} \ar[r]^-{\grad} \ar@<-1mm>[d]^-{\mathcal Q^0_{\omega}} & H(\curl, \omega; \mathbb R^3) \ar@<-1mm>@{.>}[dl]|-{\cR^1_{\omega}} \ar[r]^-{\curl} \ar@<-1mm>[d]^-{\mathcal Q^1_{\omega}} &H(\div, \omega; \mathbb R^3) \ar@<-1mm>@{.>}[dl]|-{\cR^2_{\omega}} \ar[r]^-{\div} \ar@<-1mm>[d]^-{\mathcal Q^2_{\omega}} & L^2(\omega) \ar@<-1mm>@{.>}[dl]|-{\cR^3_{\omega}} & \\
	\mathbb R \ar[r] & \Lambda_h^0(\omega) \ar[r]_-{\grad} & \Lambda_h^1(\omega) \ar[r]_-{\curl}  &\Lambda_h^2(\omega) \ar[r]_-{\div} & \Lambda_h^3(\omega) \ar[r] & 0 .
	}
\end{equation}

For the sake of completeness, all the auxiliary operators will be concretely constructed as follows. First, construct the operators $\mathcal R_{\omega}^k$, $k = 0,1,2,3$:
\begin{itemize}
\item[-]
given $u \in H^1(\omega)$, define $\cR^0_{\omega} u = M_{\omega} u \in \mathbb R $  with $M_{\omega} u = \fint_{\omega} u$, the integral mean of $u$ over $\omega$;
\item[-] for $\bm \xi \in H(\curl,\omega; \mathbb R^3)$,
define $\cR^1_{\omega} \bm \xi \in \Lambda_h^0(\omega)$ such that $\cR^1_{\omega} \bm \xi \perp \mathbb R$ and $(\grad \cR^1_{\omega} \bm \xi, \grad v)_{\omega} = (\bm \xi, \grad v)_{\omega}$, $\forall v \in \Lambda_h^0(\omega)$;
\item[-]
for $\bm v \in H(\div, \omega; \mathbb R^3)$, define $\cR^2_{\omega} \bm v \in \Lambda_h^1(\omega)$ such that $\mathcal R_{\omega}^2 \bm v \perp \grad(\Lambda_h^0(\omega))$ and $(\curl \cR^2_{\omega} \bm v, \curl \bm \eta)_{\omega} = (\bm v, \curl \bm \eta)_{\omega}$, $\forall v \in \Lambda_h^1(\omega)$;
\item[-]
for $p \in L^2(\omega)$, define $\cR^3_{\omega} p \in \Lambda_h^3(\omega)$ such that $\mathcal R_{\omega}^3 p \perp \curl(\Lambda_h^1(\omega))$ and $(\div \cR_{\omega}^3 p, \div \bm w)_{\omega} = (p, \div \bm w)_{\omega}$, $\forall \bm w \in \Lambda_h^2(\omega).$
\end{itemize}

Once $\cR_{\omega}^{k}, k = 0,1,2,3,$ are determined, the harmonic projection operators are then determined by:
\begin{itemize}
\item[-] for  $u \in H^1(\omega)$, set $\cQ^0_{\omega} u = \cR^0_{\omega} u + \cR^{1}_{\omega} \nabla u$; 
\item[-] for $\bm \xi \in H(\curl,\omega;\mathbb R^3)$, set $\cQ^1_{\omega} \bm \xi = \nabla \cR^1_{\omega} \bm \xi + \cR^2_{\omega} \curl \bm \xi$; 
\item[-] for  $\bm v \in H(\div,\omega;\mathbb R^3)$, set $\cQ^2_{\omega} \bm v = \curl \cR^2_{\omega} \bm v + \cR^3_{\omega} \div \bm v.$
\end{itemize}
It then follows from the argument in \Cref{sec:auxproj} that under Assumption (B1), $\cQ_{\omega}^k$, $k = 0,1,2$ are projection operators, i.e., $\cQ_{\omega}^0 u =u,$ $\cQ_{\omega}^1 \bm \xi = \bm \xi$, $\cQ_{\omega}^2 \bm v= \bm v$ for $u \in \Lambda_0^h$, $\bm \xi \in \Lambda_1^h$ and $\bm v \in \Lambda_2^h$, respectively. 

For convenience, given a simplex $\sigma$ and $k \ge 0$, let $M_{\sigma} = M_{\omega_{\sigma}}$, $\cQ_{\sigma}^k := \cQ_{\omega_{\sigma}}^k$ and $\cR_{\sigma}^k  := \cR_{\omega_{\sigma}}^k$; given $\ell > 0$, let $\cQ_{\sigma,[\ell]}^k := \cQ_{\omega_{\sigma}^{[\ell]}}^k$, $\cR_{\sigma,[\ell]}^k := \cR_{\omega_{\sigma}^{[\ell]}}^k$.

\subsubsection{Construction of $\pi^0$}
The construction is based on the following diagram from \eqref{eq:longdiagram}:
\begin{equation}
	\label{eq:pi0harmonic-3d}
	\xymatrix{
	&  & H^1(\omega) \ar@<-1mm>@{.>}[dl]|-{\cR^0_{\omega}} \ar[r]^-{\grad} \ar@<-1mm>[d]^-{\mathcal Q^0_{\omega}} &H(\curl,\omega;\mathbb R^3) \ar@<-1mm>@{.>}[dl]^-{\mathcal R^1_{\omega}}  & \\
	0 \ar[r] & \mathbb R \ar[r]_-{\subset} & \Lambda_h^0(\omega) \ar[r]_-{\curl}  &\Lambda_h^1(\omega) 
	}
\end{equation}
Precisely, given $u \in H^1(\Omega)$, the projection $\pi^0 u \in \Lambda_h^0$ is defined as 
$$\pi^0 u = \sum_{\sigma} L_{\sigma}^0\cQ_{\sigma}^0u + \sum_{\bm x} (\cQ _{\bm x}^0 u)(\bm x) \varphi_{\bm x}$$
It follows from Assumption (B3) and the relationship $\cQ_{\omega}^0 u = \cR_{\omega}^0 u + \cR_{\omega}^1(\nabla u)$, that 

\begin{equation}
	\label{eq:pi0u-3d}
	\pi^0 u = \sum_{\sigma} L_{\sigma}^0 \cR^1_{\sigma} \nabla u + \sum_{\bm x} (\cR^1_{\bm x} \nabla u)(\bm x) \varphi_{\bm x} + \cM^0 u.
\end{equation}
Note that $\cR_{\omega}^0 u $ is a constant, and $\mathcal M^0 u = \sum_{\bm x} M_{\bm x} u \varphi_{\bm x} = \sum_{\bm x} ( u, z_{\bm x}^0)_{\omega_{\bm x}} \varphi_{\bm x}$ is defined in \Cref{prop:dblcmplx}.

We now check that $\pi^0$ is a projection operator. Given $u \in \Lambda_h^0$, it holds that $(\mathcal Q^0_{\sigma} u )|_{\omega_{\sigma}} = u_{\omega_{\sigma}}$. Therefore, by Assumption (B4), for $u \in \Lambda_h^0$ it holds that 
$$\pi^0 u = \sum_{\sigma}L^0_{\sigma} u + \sum_{x} u(\bm x)\varphi_{\bm x} = u.$$

It is worth mentioning that 
the value of $\pi^0 u$ on $\omega_{\sigma}$ is determined by the value of $u$ on $\omega^{[1]}_{\sigma}$.

\subsubsection{Construction of $\pi^1$}
Taking $\nabla$ in \eqref{eq:pi0u-3d} yields that 
\begin{equation}
	\label{eq:nablapi0u-3d} \nabla \pi^0 u = \sum_{\sigma} \nabla L_{\sigma}^0 \cR^1_{\sigma} \nabla u + \sum_{\bm x} (\cR^1_{\bm x} \nabla u)(\bm x) \nabla \varphi_{\bm x} + \nabla\cM^0  u.
\end{equation}

It follows from \Cref{prop:dblcmplx} that $\nabla \mathcal M^0 u = \mathcal M^1 \nabla u$ for all $u \in H^1(\Omega)$. This motivates to define an operator $\widehat{\pi}^1 : H(\curl, \Omega; \mathbb R^3) \to \Lambda_h^1$ by 
\begin{equation}
	\label{eq:hatpi1-3d}
	\widehat{\pi}^1 \bm \xi = \sum_{\sigma} \nabla L_{\sigma}^0 \cR^1_{\sigma} \bm \xi + \sum_{\bm x} (\cR^1_{\bm x} \bm \xi)(\bm x) \nabla \varphi_{\bm x}  + \avg^1 \bm \xi,
\end{equation}
for $\bm \xi \in H(\curl, \Omega; \mathbb R^3)$. Note that the value of $\widehat{\pi}^1 \bm \xi$ on $\omega_{\sigma}$ is determined by the value of $\bm \xi$ on $\omega_{\sigma}^{[1]}$.

Then, it follows from \eqref{eq:nablapi0u-3d}  and \eqref{eq:hatpi1-3d} that $ \widehat{\pi}^1 \curl u = \curl \pi^0 u$ for any $u\in H^1(\Omega)$. In particular, it holds that $ \widehat{\pi}^1 \curl u|_{\omega_{\sigma}} = \curl \pi^0 u|_{\omega_{\sigma}}$ depends on the value of $u$ on $\omega_{\sigma}^{[1]}$. As a result, when considering the value on $\omega_{\sigma}$, the operator $\widehat{\pi}^1$ can be regarded as an operator from $H(\curl, \omega_{\sigma}^{[1]};\mathbb R^3)$ to $\Lambda_h^1(\omega_{\sigma}^{[1]})$.
However, the operator $\widehat \pi^1$ is not a projection operator, which needs the following modification. Consider the following diagram,
\begin{equation*}
	\xymatrix{
	&  & H(\curl, \omega; \mathbb R^3) \ar@<-1mm>@{.>}[dl]|-{\cR^1_{\omega}} \ar[r]^-{\curl} \ar@<-1mm>[d]^-{\mathcal Q^1_{\omega}} & H(\div,\omega;\mathbb R^3)\ar@<-1mm>@{.>}[dl]^-{\mathcal R^2_{\omega}}   \\
	\mathbb R \ar[r] & \Lambda_h^0(\omega) \ar[r]_-{\grad} & \Lambda_h^1(\omega) \ar[r]_-{\curl}  &\Lambda_h^2(\omega). 
	}
\end{equation*}

Define the following modified interpolation of $\widehat{\pi}^1 \bm \xi$, for any $\bm \xi \in H(\curl,\Omega;\mathbb R^3)$, 
\begin{equation}\label{eq:pi1-3d}
	\begin{split}
		\pi^1 \bm \xi = &\widehat{\pi}^1 \bm \xi + \sum_{\sigma} L _{\sigma}^1(I - \widehat{\pi}^1) \cQ_{\sigma,[1]}^1  \bm \xi + \sum_{\bm e} ( (I - \widehat{\pi}^1) \cQ_{\bm e,[1]}^1 \bm \xi,\bm t)_{\bm e}\varphi_{\bm e}, 
	\end{split}
\end{equation}
where $\cQ^1_{\sigma, [\ell]}$ is the harmonic interpolation operator $\cQ^1$ with respect to the patch $\omega_{\sigma}^{[\ell]}$. 
Note that the value $\widehat{\pi}^1\bm \xi$ on $\omega_{\sigma}$ only depends on the value of $\bm \xi$ on $\omega_{\sigma}^{[1]}.$ Therefore, it follows from Assumption (B4) that $\pi^1$ in \eqref{eq:pi1-3d} is well-defined. We first show that $\pi^1$ is a projection operator. For any $\bm \xi \in \Lambda_h^1$, $(\cQ_{\sigma,[1]} \bm \xi )|_{\omega_{\sigma}^{[1]}}= \bm \xi|_{\omega_{\sigma}^{[1]}}$ according to the definition of the harmonic projection, and hence
$$ \pi^1 \bm \xi = \widehat \pi^1 \bm \xi + \sum_{\sigma} L _{\sigma}^1(I - \widehat{\pi}^1) \bm \xi + \sum_{\bm e} ( (I - \widehat{\pi}^1)\bm \xi,\bm t)_{\bm e}\varphi_{\bm e} = \widehat \pi^1 \bm \xi+ (I - \widehat \pi^1) \bm \xi = \bm \xi.$$

It follows from the definition 
$\mathcal Q_{\omega}^1 \bm \xi = \nabla \mathcal R_{\omega}^1\bm \xi + \mathcal R_{\omega}^2 \curl \bm \xi,$
that on patch $\omega_{\sigma}$
\begin{equation*}
	\begin{split}
		(I - \widehat \pi^1) \cQ_{\sigma, [1]}^1 \bm \xi  = &  (I - \widehat \pi^1 ) \nabla \cR^1_{\sigma, [1]} \bm \xi + (I - \widehat \pi^1) \cR_{\sigma, [1]}^2\curl \bm \xi \\
		= & \nabla (I - \pi^0)\cR^1_{\sigma, [1]} \bm \xi + (I - \widehat \pi^1) \cR_{\sigma, [1]}^2\curl \bm \xi \\
		= & (I - \widehat \pi^1) \cR_{\sigma, [1]}^2\curl \bm \xi.
	\end{split}
\end{equation*}

Therefore, the projection $\pi^1\bm \xi$ can be simplified as 
		$$\pi^1\bm \xi =  \widehat{\pi}^1 \bm \xi + \sum_{\sigma} L _{\sigma}^1(I - \widehat{\pi}^1) \cR_{\sigma,[1]}^2  \curl \bm \xi + \sum_{\bm e} ( (I - \widehat{\pi}^1) \cR_{\bm e,[1]}^2 \curl \bm \xi, \bm t)_{\bm e}\varphi_{\bm e}.$$
This and the fact that $\widehat{\pi}^1 \nabla u = \nabla \pi^0 u$ lead to the crucial commuting property: $\pi^1 \nabla  u = \nabla \pi^0 u$ for $u \in H^1(\Omega)$. The value of $\pi^1 \bm\xi$ on $\omega_{\sigma}$ is determined by the value of $\bm\xi$ on $\omega^{[2]}_{\sigma}$.

\subsubsection{Construction of $\pi^2$}
Now consider the construction of $\pi^2$. Taking $\curl$ in \eqref{eq:hatpi1-3d} yields that 
\begin{equation}\label{eq:curlhatpi1-3d}
\curl \hat{\pi}^1 \bm \xi = \curl \avg^1 \bm \xi = \avg^2 \curl \bm \xi,
\end{equation}
where the latter equation comes from \Cref{prop:dblcmplx}. As a result, 
\begin{equation}\label{eq:curlpi1-3d} \curl \pi^1\bm \xi = \avg^2 \curl \bm \xi + \sum_{\sigma} \curl L _{\sigma}^1(I - \widehat{\pi}^1) \cR_{\sigma,[1]}^2  \curl \bm \xi + \sum_{\bm e} ( (I - \widehat{\pi}^1) \cR_{\bm e,[1]}^2 \curl \bm \xi, \bm t)_{\bm e}\curl \varphi_{\bm e}.\end{equation}
Motivated by this, for $\bm v \in H(\div,\Omega; \mathbb R^3)$, define
\begin{equation}
	\label{eq:hatpi2v-3d}
\widehat{\pi}^2\bm v= \avg^2 \bm v + \sum_{\sigma} \curl L_{\sigma}^1 (I - \widehat{\pi}^1)\cR^{2}_{\sigma,[1]} \bm v + \sum_{\bm e} ( (I - \widehat{\pi}^1)\cR^{2}_{\bm e,[1]} \bm v ,\bm t)_{\bm e} \curl \varphi_{\bm e}.
\end{equation}
Then it follows from {\Cref{prop:dblcmplx}, \eqref{eq:curlhatpi1-3d} and \eqref{eq:pi1-3d} that } $\widehat{\pi}^2 \curl \bm \xi = \curl \pi^1 \bm \xi$, and that the value of $\widehat \pi^2 \bm v$ on $\omega_{\sigma}$ only depends on the value of $\bm v$ on $\omega_{\sigma}^{[2]}$. Therefore, the operator $\widehat{\pi}^2$ can be regarded as from $H(\div,\omega_{\sigma}^{[2]};\mathbb R^3)$ to $\Lambda_h^2(\omega_{\sigma}^{[2]})$, when only its value on $\omega_{\sigma}$ is considered. The modification of $\widehat{\pi}^2$ to a projection operator is similar as that in the construction of $\pi^1$, which is based on the following diagram,

\begin{equation*}
	\xymatrix{
	&  & H(\div, \omega; \mathbb R^3) \ar@<-1mm>@{.>}[dl]|-{\cR^2_{\omega}} \ar[r]^-{\div} \ar@<-1mm>[d]^-{\mathcal Q^2_{\omega}} & L^2(\omega)\ar@<-1mm>@{.>}[dl]|-{\mathcal R^3_{\omega}}  & \\
	\Lambda_h^0(\omega) \ar[r] & \Lambda_h^1(\omega) \ar[r]^-{\curl} & \Lambda_h^2(\omega) \ar[r]^-{\div}  &\Lambda_h^3(\omega)  & .
	}
\end{equation*}

It follows from the definition 
$\mathcal Q_{\omega}^2 \bm v = \curl \mathcal R_{\omega}^2 \bm v + \mathcal R_{\omega}^3 \div \bm v,$
that on patch $\omega_{\sigma}$
\begin{equation*}
	\begin{split}
		(I - \widehat \pi^2) \cQ_{\sigma, [2]} \bm v  = &  (I - \widehat \pi^2 ) \curl \cR^2_{\sigma, [2]} \bm v + (I - \widehat \pi^2) \cR_{\sigma, [2]}^3\div \bm v \\
		= & \curl (I - \pi^1)\cR^2_{\sigma, [2]} \bm v + (I - \widehat \pi^2) \cR_{\sigma, [2]}^3\div \bm v \\
		= & (I - \widehat \pi^2) \cR_{\sigma, [2]}^3\div \bm v.
	\end{split}
\end{equation*}

Therefore, for $\bm v \in H(\div, \Omega; \mathbb R^3)$, define
\begin{equation}\label{eq:pi2-3d}
	\begin{split}
		\pi^2 \bm v = &~ \widehat{\pi}^2 \bm v + \sum_{\sigma} L _{\sigma}^2(I - \widehat{\pi}^2) \cQ^{2}_{\sigma, [2]} \bm v + \sum_{\bm f} ( (I - \widehat{\pi}^2) \cQ^{2}_{\sigma, [2]}\bm v,\bm n)_{\bm f}\varphi_{\bm f} \\ = & ~\widehat{\pi}^2 \bm v + \sum_{\sigma} L _{\sigma}^2(I - \widehat{\pi}^2) \cR^{3}_{\sigma, [2]} \div \bm v + \sum_{\bm f} ( (I - \widehat{\pi}^2) \cR^{3}_{\sigma, [2]}\div \bm v,\bm n)_{\bm f}\varphi_{\bm f}.
	\end{split}
\end{equation}
This shows that $\pi^2 \curl \bm \xi = \curl \pi^1 \bm \xi$ for all $\bm \xi \in H(\curl,\Omega;\mathbb R^3)$.
It follows from $\cQ_{\sigma,[2]}^2$ is {a projection operator on $\omega_{\sigma}^{[2]}$ that $\pi^2$ is also a projection operator, namely, $\pi^2 \bm v = \bm v$ for $\bm v \in \Lambda_h^2$. Moreover, the value of $\pi^2 \bm v$ on $\omega_{\sigma}$ is determined by the value of $\bm v$ on $\omega^{[3]}_{\sigma}$.

\subsubsection{Construction of $\pi^3$}
Finally, given $p \in L^2(\Omega)$, define
\begin{equation}\label{eq:pi3p-3d} \pi^3 p  = \avg^3 p + \sum_{\sigma} \div  L_{\sigma} (I - \widehat{\pi}^{2}) \cR^{3}_{\sigma, [2]} p+  \sum_{\bm f} ( (I - \widehat{\pi}^2) \cR^{3}_{\sigma, [2]} p,\bm n)_{\bm f}\operatorname{div}\varphi_{\bm f}.
\end{equation}
Then it follows from \Cref{prop:dblcmplx}, \eqref{eq:pi2-3d} and the definition of $\pi^3$ that $\pi^3 \div \bm v = \div \pi^2 \bm v$, and the commuting property immediately comes from the surjectivity of $\div$ from $H(\div, \Omega; \mathbb R^3)$ to $L^2(\Omega)$. In addition, $\pi^3$ is also a projection operator since $\div : \Lambda_h^2\to \Lambda_h^3$ is surjective. In fact, for $p \in \Lambda_h^3$, there exists $\bm v \in \Lambda_h^2$ such that $\div \bm v = p$ and therefore $\pi^3 p = \pi^3 \div \bm v = \div \pi^2 \bm v = \div \bm v = p.$ Note that, the value of $\pi^3 p$ on $\omega_{\sigma}$ is determined by the value of $\bm p$ on $\omega^{[3]}_{\sigma}$.

\subsection{Local Estimates in Three Dimensions}
In this subsection, the local estimates on Theorem~\ref{thm:main-3d} are proved.
Hereafter, the notation $a \lesssim b$ means $a \le Cb$ for some positive constant $C$,  where the hidden constant might depend on the shape regularity and the polynomial degree of the finite element complex.

The following lemma gives the estimates of the auxiliary operators constructed in \Cref{sec:auxproj}.
\begin{lemma}\label{lem:bounds-3d}
	For three dimensions, the following local estimates hold
	\begin{equation}
		\label{eq:bounds-3d-Qk}
		\|\cQ^0_{\omega} u\|_{H^1(\omega)} \lesssim \|u\|_{H^1(\omega)}, \|\cQ^1_{\omega} \bm \xi \|_{H(\curl, \omega)} \lesssim \| \bm \xi  \|_{H(\curl, \omega)}, \|\cQ^2_{\omega} \bm v \|_{H(\div, \omega)} \lesssim \| \bm v  \|_{H(\div, \omega)},
	\end{equation}
	and
	\begin{equation}
		\label{eq:bounds-3d-Rk}
		\|\cR^1_{\omega} \bm \xi\|_{H^1(\omega)} \lesssim \| \bm \xi \|_{H(\curl,\omega)}, \|\cR^2_{\omega} \bm v\|_{H(\curl, \omega)} \lesssim \|\bm v \|_{H(\div, \omega)}, \|\cR^3_{\omega} p\|_{H(\div,\omega)} \lesssim \|p\|_{L^2(\omega)}
	\end{equation}
	for $u \in H^1(\omega)$, $\bm \xi \in H(\curl,\omega ; \mathbb R^3)$, $\bm v \in H(\div,\omega ; \mathbb R^3)$ and $p \in L^2(\omega)$. Here the constant depends on only the shape of $\omega$ and the finite element method under consideration. If $\omega = \omega_{\sigma}^{[\ell]}$ for some $\ell \ge 0$, then the constant is uniformly bounded and depends on the shape regularity, $\ell$, and the finite element under consideration.
\end{lemma}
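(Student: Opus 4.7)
The plan is to handle the $\mathcal R^k_\omega$ operators first and then read off the $\mathcal Q^k_\omega$ bounds from the identities $\mathcal Q^0_\omega u = \mathcal R^0_\omega u + \mathcal R^1_\omega \nabla u$, $\mathcal Q^1_\omega \bm\xi = \nabla \mathcal R^1_\omega \bm\xi + \mathcal R^2_\omega \curl \bm\xi$, and $\mathcal Q^2_\omega \bm v = \curl \mathcal R^2_\omega \bm v + \mathcal R^3_\omega \div \bm v$. Since $\mathcal R^0_\omega u$ is just the integral mean of $u$, Cauchy--Schwarz gives $\|\mathcal R^0_\omega u\|_{L^2(\omega)} \lesssim \|u\|_{L^2(\omega)}$ immediately.

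For $\mathcal R^1_\omega$, test the defining variational problem with $v = \mathcal R^1_\omega \bm\xi$; this is admissible because $\mathcal R^1_\omega \bm\xi \perp \mathbb R$, and yields $\|\nabla \mathcal R^1_\omega \bm\xi\|_{L^2(\omega)}^2 \le \|\bm\xi\|_{L^2(\omega)}\|\nabla \mathcal R^1_\omega \bm\xi\|_{L^2(\omega)}$. Combined with a Poincaré inequality on $\omega$ (applicable because $\mathcal R^1_\omega \bm\xi \perp \mathbb R$) this gives the full $H^1$ bound. Exactly the same energy trick applied to $\mathcal R^2_\omega$ and $\mathcal R^3_\omega$ produces $\|\curl \mathcal R^2_\omega \bm v\|_{L^2(\omega)} \lesssim \|\bm v\|_{L^2(\omega)}$ and $\|\div \mathcal R^3_\omega p\|_{L^2(\omega)} \lesssim \|p\|_{L^2(\omega)}$, respectively. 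To pass from these differential-part estimates to the full $H(\curl)$ and $H(\div)$ bounds, I would invoke discrete Poincaré inequalities of the form
\begin{equation*}
\|\bm\eta\|_{L^2(\omega)} \lesssim \|\curl \bm\eta\|_{L^2(\omega)} \quad \text{for } \bm\eta \in \Lambda_h^1(\omega) \text{ with } \bm\eta \perp \grad \Lambda_h^0(\omega),
\end{equation*}
and the analogous one at the $H(\div)$ level for elements orthogonal to $\curl \Lambda_h^2(\omega)$. Both are consequences of the exactness assumption (B1) applied on $\omega$ together with an open-mapping argument on the discrete subcomplex.

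Once the $\mathcal R^k_\omega$ estimates are in place, the $\mathcal Q^k_\omega$ bounds are purely algebraic: for instance $\|\mathcal Q^1_\omega \bm\xi\|_{H(\curl,\omega)} \le \|\nabla \mathcal R^1_\omega \bm\xi\|_{L^2(\omega)} + \|\mathcal R^2_\omega \curl \bm\xi\|_{H(\curl,\omega)} \lesssim \|\bm\xi\|_{L^2(\omega)} + \|\curl \bm\xi\|_{L^2(\omega)}$, using that $\curl \nabla \mathcal R^1_\omega \bm\xi = 0$. The estimates for $\mathcal Q^0_\omega$ and $\mathcal Q^2_\omega$ are obtained in exactly the same way.

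The main obstacle is making the constants in the discrete Poincaré inequalities uniform in $h$ when $\omega = \omega_\sigma^{[\ell]}$. I would resolve this by a standard scaling argument: on any fixed reference patch the exactness of the discrete complex and its finite dimensionality yield a constant; by shape regularity each patch $\omega_\sigma^{[\ell]}$ is affinely equivalent, up to a bounded family of configurations depending only on the shape regularity and on $\ell$, to such a reference patch. The hypothesis that the shape function space and degrees of freedom are affine-interpolant equivalent ensures that the discrete subcomplex transforms correctly under this rescaling, so the Poincaré constant can be bounded independently of $h$, giving the stated uniform estimates.
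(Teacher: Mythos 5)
Your proof is correct and follows essentially the same route as the paper: energy estimates for the $\mathcal R^k_{\omega}$ combined with norm equivalence (your discrete Poincar\'e inequalities, which hold precisely because exactness from (B1) makes $\curl$, resp.\ $\div$, injective on the orthogonal complement of the previous image in the finite-dimensional patch spaces), followed by the same scaling-and-compactness argument for uniformity over patches; the only cosmetic difference is that the paper bounds $\mathcal Q^k_{\omega}$ directly as a projection in the harmonic inner product and then reads off the $\mathcal R^k_{\omega}$ bound, whereas you go in the opposite order through the decomposition $\mathcal Q^k = d\,\mathcal R^k + \mathcal R^{k+1} d$. One small slip to fix: the second discrete Poincar\'e inequality should be stated for $\bm w \in \Lambda_h^2(\omega)$ orthogonal to $\curl \Lambda_h^1(\omega)$, not to ``$\curl \Lambda_h^2(\omega)$''.
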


\begin{proof}
	Take $\cQ_{\omega}^1$ and $\cR_{\omega}^1$ as the examples, and the other estimates can be proved similarly.
	Since $\Lambda_h^k(\omega)$ is of finite dimension and $\mathcal Q_{\omega}^1$ is a projection under the inner product $\LL \cdot, \cdot \RR_{\Lambda_h^1(\omega)}$, it follows from the norm equivalence theorem that 
	$$ \|\cQ_{\omega}^1 \bm \xi \|_{H(\curl,\omega)}^2 \le C \LL \cQ_{\omega}^1  \bm \xi,  \cQ_{\omega}^1  \bm \xi \RR_{\Lambda_h^1(\omega)} \le C \LL  \bm \xi, \bm \xi \RR_{\Lambda_h^1(\omega)}.
	$$
	Here the constant $C$ depends on $\Lambda_h^1(\omega)$.
	Since $\mathcal P_{\grad \Lambda_h^0 (\omega)}$ is an $L^2$ projection, it holds that 
	$$ 
	\LL  \bm \xi, \bm \xi \RR_{\Lambda_h^1(\omega)}  = (\mathcal P_{\grad \Lambda_h^0 (\omega)} \bm \xi, \mathcal P_{\grad \Lambda_h^0 (\omega)} \bm \xi )_{\omega} + (\curl \bm \xi, \curl \bm \xi)_{\omega} \le \|\bm \xi\|_{H(\curl,\omega)}^2.
	$$
	As a result, it follows that $\|\cQ_{\omega}^1 \bm \xi \|_{H(\curl,\omega)} \le C \|\bm \xi\|_{H(\curl,\omega)}$, which completes the proof of \eqref{eq:bounds-3d-Qk}.

	Since $\cR_{\omega}^1 \bm \xi  \perp \mathbb R$, it then follows from the norm equivalence theorem on $\Lambda_h^0(\omega)$ that 
	$$\|\cR_{\omega}^1 \bm \xi\|_{H^1(\omega)} \le C \| \grad \cR_{\omega}^1 \bm \xi \|_{L^2(\omega)}  \le C\|\cQ_{\omega}^1 \bm \xi\|_{L^2(\omega)},
$$ 
	which indicates the estimation of \eqref{eq:bounds-3d-Rk}. Here, the constant $C$ is also dependent on $\Lambda_h^0(\omega)$.
	
	Finally, the standard scaling and compactness argument (cf. \cite{2015FalkWinther}) completes the proof.

\end{proof}

Under the assumptions of Theorem~\ref{thm:main-2d}, all $L_{\sigma}^k$ defined on the finite element spaces $\Lambda_h^k, k = 0,1,2,3,$ are local bounded, namely 
\begin{equation}\label{eq:dofbound-3d}
	\begin{split}
	\|L_{\sigma}^0 u \|_{H^1(\omega_{\sigma})} \lesssim \| u \|_{H^1(\omega_{\sigma})},\,\,\|L_{\sigma}^1 \bm \xi \|_{H(\curl,\omega_{\sigma})} \lesssim \| \bm \xi \|_{H(\curl,\omega_{\sigma})}, \\ 
	\|L_{\sigma}^2 \bm v \|_{H(\div,\omega_{\sigma})} \lesssim \| \bm v \|_{H(\div, \omega_{\sigma})},\,\,\|L_{\sigma}^3 p \|_{L^2(\omega_{\sigma})} \lesssim \| p \|_{L^2(\omega_{\sigma})},
	\end{split}
\end{equation}
and 
\begin{equation}\label{eq:dofbound2-3d}
\begin{split}
\| u(\bm x)\varphi_{\bm x}\|_{H^1(\omega_{\sigma})} \lesssim \| u \|_{H^1(\omega_{\sigma})}, \,\, 
\| (\bm \xi \cdot \bm t, 1)_{\bm e} \varphi_{\bm e}\|_{H(\curl,\omega_{\sigma})} \lesssim \| \bm \xi \|_{H(\curl,\omega_{\sigma})}, \\
\| (\bm v \cdot \bm n, 1)_{\bm f} \varphi_{\bm f}\|_{H(\div,\omega_{\sigma})} \lesssim \| \bm v \|_{H(\div, \omega_{\sigma})},\,\,
\| (p,1)_{\bm K} \varphi_{\bm K}\|_{L^2(\omega_{\sigma})} \lesssim \| p \|_{L^2(\omega_{\sigma})},
\end{split}
\end{equation}
for finite element functions $u \in \Lambda_h^0, \bm \xi \in \Lambda_h^1, \bm v \in \Lambda_h^2$ and $p \in \Lambda_h^3$. Here the universal constant comes from the shape regularity and the dimension of the corresponding local finite element spaces. 

The following estimates are based on \Cref{lem:bounds-3d}, \eqref{eq:dofbound-3d} and \eqref{eq:dofbound2-3d}, and the following lemmas bounding the operators $\mathcal M^k$ for $k = 0,1,2,3$.

\begin{lemma}
\label{lem:boundsM-3d}
If $\mathcal T$ is shape-regular, then 
\begin{equation}
\begin{split}
	\| \mathcal M^0 u\|_{H^1(\bm K)} \lesssim \| u \|_{H^1(\omega_{\bm K}^{[1]})}, \,\,
	\| \mathcal M^1 \bm \xi \|_{H(\curl,\bm K)} \lesssim \| \bm \xi \|_{H(\curl,\omega_{\bm K}^{[1]})},\\
	\| \mathcal M^2 \bm v \|_{H(\div,\bm K)} \lesssim \|\bm v\|_{H(\div,\omega_{\bm K}^{[1]})},\,\,
	\| \mathcal M^3 p\|_{L^2(\bm K)} \lesssim \| p\|_{L^2(\omega_{\bm K}^{[1]})}.
\end{split}
\end{equation}
Here the constants depend only on $\omega$ and the finite element space.
\end{lemma}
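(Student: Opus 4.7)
The plan is to split the four bounds into an $L^2$ part (applied to each $\mathcal M^k$ on its natural codomain) and a derivative part, and to get the derivative bounds essentially for free from the commuting diagram of \Cref{prop:dblcmplx}.

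First I would establish the $L^2$ estimate $\|\mathcal M^k w\|_{L^2(\bm K)} \lesssim \|w\|_{L^2(\omega_{\bm K}^{[1]})}$ for $k=0,1,2,3$. Writing $\mathcal M^k w = \sum_{\sigma}(w,z_\sigma^k)_{\omega_\sigma^h}\varphi_\sigma$, Cauchy--Schwarz gives
\begin{equation*}
\|\mathcal M^k w\|_{L^2(\bm K)} \le \sum_{\sigma : \omega_\sigma \cap \bm K \neq \emptyset} \|w\|_{L^2(\omega_\sigma^h)} \, \|z_\sigma^k\|_{L^2(\omega_\sigma^h)} \, \|\varphi_\sigma\|_{L^2(\bm K)}.
\end{equation*}
Shape regularity ensures the number of simplices $\sigma$ contributing is bounded uniformly, and that $\omega_\sigma^h \subset \omega_{\bm K}^{[1]}$ for every such $\sigma$. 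It therefore suffices to show the product $\|z_\sigma^k\|_{L^2(\omega_\sigma^h)}\|\varphi_\sigma\|_{L^2(\bm K)}$ is uniformly bounded, independent of $h$. This is a scaling statement: on the reference patch, $\|\varphi_\sigma\|_{L^2} \sim h^{d/2-k}$ (by the normalisation built into the degrees of freedom, $\varphi_\sigma$ is the Whitney-type dual of the $k$-simplex $\sigma$) whereas $\|z_\sigma^k\|_{L^2} \sim h^{k-d/2}$.

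The scaling of the weights is the main technical point. For $k=0$ one has $z_{\bm x}^0 = \vmathbb 1_{\omega_{\bm x}}/|\omega_{\bm x}|$, so $\|z_{\bm x}^0\|_{L^2} = |\omega_{\bm x}|^{-1/2}$ is immediate. For $k \ge 1$ the weight $z_\sigma^k$ is defined implicitly as a solution of a co-boundary problem on the patch $\omega_\sigma^h$ (e.g.\ $-\div z_{\bm e}^1 = \vmathbb 1_{\omega_{\bm y}}/|\omega_{\bm y}| - \vmathbb 1_{\omega_{\bm x}}/|\omega_{\bm x}|$ with vanishing normal trace on $\partial \omega_{\bm e}^h$) posed in the lowest-order Whitney subcomplex. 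I would invoke the construction from \cite{2014FakWinther,2015FalkWinther}, where this problem is shown to admit a solution with $\|z_\sigma^k\|_{L^2(\omega_\sigma^h)} \lesssim h \cdot \|\text{data}\|_{L^2(\omega_\sigma^h)}$; a Pi\-ola/scaling argument on the reference patch then delivers the advertised $h^{k-d/2}$ scaling. Combined with $\|\varphi_\sigma\|_{L^2} \sim h^{d/2-k}$, this yields the uniform bound, and thereby the desired $L^2$ estimate on $\mathcal M^k$.

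Second, the derivative bounds follow at once from \Cref{prop:dblcmplx}. Indeed, $\grad \mathcal M^0 u = \mathcal M^1 \grad u$ gives $\|\grad \mathcal M^0 u\|_{L^2(\bm K)} \lesssim \|\grad u\|_{L^2(\omega_{\bm K}^{[1]})}$ from the $L^2$ bound on $\mathcal M^1$ applied to $\grad u$; likewise $\curl \mathcal M^1 \bm\xi = \mathcal M^2 \curl \bm\xi$ and $\div \mathcal M^2 \bm v = \mathcal M^3 \div \bm v$ reduce the $H(\curl)$ and $H(\div)$ bounds on $\mathcal M^1$ and $\mathcal M^2$ to the $L^2$ bounds on $\mathcal M^2$ and $\mathcal M^3$ that have just been established. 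Adding the $L^2$ and derivative parts gives each of the four estimates claimed in the lemma. The hidden constants depend only on the shape regularity (which controls patch overlap) and on the reference-element data underlying the weight construction, as required.
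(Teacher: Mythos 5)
Your proposal is correct and is essentially the argument behind the estimate: the paper itself gives no proof here, deferring entirely to the Falk--Winther references, and those references establish exactly the combination you describe, namely an $L^2$ bound on each $\mathcal M^k$ via the reciprocal scalings $\|z_\sigma^k\|_{L^2(\omega_\sigma^h)}\sim h^{k-d/2}$ and $\|\varphi_\sigma\|_{L^2}\sim h^{d/2-k}$ together with the commuting identities of \Cref{prop:dblcmplx} to control the $\grad$, $\curl$ and $\div$ terms. Note only that, like the paper, you still borrow the key bound on the implicitly defined weights $z_\sigma^k$ (the gain of one factor of $h$ per co-boundary solve on $\omega_\sigma^h$) from \cite{2014FakWinther,2015FalkWinther}, and that the inclusion $\omega_\sigma^h\subseteq\omega_{\bm K}^{[1]}$ is a purely combinatorial consequence of the patch definitions, with shape regularity entering only through the bounded overlap and the uniformity of the scaling constants.
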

\begin{proof}
See \cite{2015FalkWinther, 2014FakWinther}.
\end{proof}

Now first consider the estimation of $\pi^0$.
\begin{lemma} 
For $u \in H^1(\omega_{\bm K}^{[1]})$, it holds that $\|\pi^0 u\|_{H^1(\bm K)} \lesssim \|u\|_{H^1(\omega_{\bm K}^{[1]})}.$
\end{lemma}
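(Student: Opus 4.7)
The plan is to bound $\pi^0 u$ by triangle inequality on the explicit formula \eqref{eq:pi0u-3d}, namely
\[
\pi^0 u = \sum_{\sigma} L_{\sigma}^0 \cR^1_{\sigma} \nabla u \;+\; \sum_{\bm x} (\cR^1_{\bm x} \nabla u)(\bm x)\, \varphi_{\bm x} \;+\; \cM^0 u,
\]
and to estimate the three contributions on $\bm K$ separately. The third piece is immediate: $\|\cM^0 u\|_{H^1(\bm K)} \lesssim \|u\|_{H^1(\omega_{\bm K}^{[1]})}$ by \Cref{lem:boundsM-3d}.

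For the first sum, I would first invoke locality (Assumption (B4)): the support of $L_\sigma^0(\cdot)$ lies in $\omega_\sigma$, so only the (finitely many) $\sigma \subseteq \overline{\bm K}$ can contribute to $\bm K$, and for each such $\sigma$ the patch $\omega_\sigma$ is contained in $\omega_{\bm K}$ (hence in $\omega_{\bm K}^{[1]}$). For a fixed such $\sigma$, the dof bound \eqref{eq:dofbound-3d} gives
\[
\|L_\sigma^0 \cR^1_\sigma \nabla u\|_{H^1(\bm K)} \le \|L_\sigma^0 \cR^1_\sigma \nabla u\|_{H^1(\omega_\sigma)} \lesssim \|\cR^1_\sigma \nabla u\|_{H^1(\omega_\sigma)}.
\]
Next, \Cref{lem:bounds-3d} gives $\|\cR^1_\sigma \bm\xi\|_{H^1(\omega_\sigma)} \lesssim \|\bm\xi\|_{H(\curl,\omega_\sigma)}$, and the key observation is that for $\bm\xi = \nabla u$ one has $\curl \bm\xi = 0$, so $\|\nabla u\|_{H(\curl,\omega_\sigma)} = \|\nabla u\|_{L^2(\omega_\sigma)} \le \|u\|_{H^1(\omega_\sigma)} \le \|u\|_{H^1(\omega_{\bm K}^{[1]})}$. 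Summing over the finitely many admissible $\sigma$ (a number controlled by shape regularity) produces the desired bound on the first sum.

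The second sum is treated the same way: only vertices $\bm x$ of $\bm K$ contribute (since $\varphi_{\bm x}$ is supported in $\omega_{\bm x}$), and for each such vertex we use \eqref{eq:dofbound2-3d} applied to the finite element function $\cR^1_{\bm x}\nabla u \in \Lambda_h^0(\omega_{\bm x})$:
\[
\|(\cR^1_{\bm x}\nabla u)(\bm x)\,\varphi_{\bm x}\|_{H^1(\bm K)} \lesssim \|\cR^1_{\bm x}\nabla u\|_{H^1(\omega_{\bm x})} \lesssim \|\nabla u\|_{H(\curl,\omega_{\bm x})} = \|\nabla u\|_{L^2(\omega_{\bm x})} \le \|u\|_{H^1(\omega_{\bm K}^{[1]})},
\]
again using \Cref{lem:bounds-3d} and $\curl\nabla u = 0$.

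I do not anticipate a real obstacle here — the estimate is essentially a bookkeeping exercise combining the previously proved bounds. The only points requiring care are the locality accounting (checking $\omega_\sigma \subset \omega_{\bm K}^{[1]}$ for every contributing $\sigma$ and that their number is bounded by shape regularity) and the systematic use of the identity $\curl \nabla u = 0$, which is what converts the $H(\curl)$ bound produced by \Cref{lem:bounds-3d} into an $H^1$ bound on $u$.
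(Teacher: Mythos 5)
Your argument is correct and follows essentially the same route as the paper: the paper applies the triangle inequality to the unexpanded form $\pi^0 u = \sum_{\sigma} L_{\sigma}^0\cQ_{\sigma}^0u + \sum_{\bm x} (\cQ _{\bm x}^0 u)(\bm x) \varphi_{\bm x}$ and invokes the $\cQ^0$-bound in \eqref{eq:bounds-3d-Qk}, whereas you expand via \eqref{eq:pi0u-3d} and combine \Cref{lem:boundsM-3d}, \eqref{eq:bounds-3d-Rk} and $\curl\nabla u=0$, which amounts to the same bookkeeping. One tiny slip: for a subsimplex $\sigma$ of $\bm K$ the patch $\omega_{\sigma}$ is in general \emph{not} contained in $\omega_{\bm K}$ (which is just $\bm K$ itself), only in $\omega_{\bm K}^{h}\subset\omega_{\bm K}^{[1]}$ --- but since the containment you actually use, $\omega_{\sigma}\subset\omega_{\bm K}^{[1]}$, is correct, the estimate stands.
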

\begin{proof}
	It follows from the definition in \eqref{eq:pi0u-3d}, \eqref{eq:dofbound-3d}, \eqref{eq:bounds-3d-Qk}, and the facts that $\omega_{\sigma} \subset \omega_{\bm K}^{[1]}$ for any subsimplex $\sigma$ of $\bm K$, and the number of subsimplices and vertices are uniformly bounded (depending on the shape regularity) that
\begin{equation}
	\begin{split}
		\|\pi^0u\|_{H^1(\bm K)} & \le \sum_{\sigma \in \bm K} \|L_{\sigma}^0 \cQ_{\sigma}^0u \|_{H^1(\bm K)} + \sum_{\bm x \in \bm K }\|(\cQ _{\bm x}^0 u)(\bm x) \varphi_{\bm x} \|_{H^1(\bm K)} \\
		& \lesssim  \sum_{\sigma \in \bm K} \|\cQ_{\sigma}^0u\|_{H^1(\omega_{\sigma})} + \sum_{\bm x \in \bm K} \|\cQ_{x}^0u\|_{H^1(\omega_{\bm x})} \\
		& \lesssim \|u\|_{H^1(\omega_{\bm K}^{[1]})}.
	\end{split}
\end{equation}
This completes the proof.

\end{proof}

Next, consider the estimation of ${\pi}^1$.

\begin{lemma} For $\bm \xi \in H(\curl, \omega_{\bm K}^{[2]}; \mathbb R^3)$, it holds that
	$\|\pi^1\bm \xi\|_{H(\curl, \bm K)} \lesssim \|\bm \xi\|_{H(\curl,\omega_{\bm K}^{[2]})}.$
\end{lemma}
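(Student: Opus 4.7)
The plan is to apply the triangle inequality to the explicit formula \eqref{eq:pi1-3d} for $\pi^1$, splitting the contribution on $\bm K$ into the leading term $\widehat{\pi}^1 \bm \xi$ and the correction terms indexed by the subsimplices $\sigma$ and edges $\bm e$ of $\bm K$, then bound each piece separately by chaining the auxiliary estimates from \Cref{lem:bounds-3d}, \Cref{lem:boundsM-3d}, \eqref{eq:dofbound-3d}, and \eqref{eq:dofbound2-3d}. Because $\bm K$ has a uniformly bounded number of subsimplices (depending only on the dimension), once each local piece is controlled by the $H(\curl)$ norm on a suitable enlarged patch the sum collapses to a single estimate.

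First I would bound the leading term $\widehat{\pi}^1 \bm \xi$ in $H(\curl, \bm K)$ via its definition \eqref{eq:hatpi1-3d}. The summand $\mathcal M^1 \bm \xi$ is handled directly by \Cref{lem:boundsM-3d}, giving $\|\mathcal M^1 \bm \xi\|_{H(\curl,\bm K)} \lesssim \|\bm \xi\|_{H(\curl, \omega_{\bm K}^{[1]})}$. Each of the remaining summands $\nabla L_\sigma^0 \cR_\sigma^1 \bm \xi$ and $(\cR_{\bm x}^1 \bm \xi)(\bm x) \nabla \varphi_{\bm x}$ is a pure gradient, so its curl vanishes and its $H(\curl)$ norm collapses to the $L^2$ norm of a gradient. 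Using the local stability \eqref{eq:dofbound-3d}/\eqref{eq:dofbound2-3d} to absorb $L_\sigma^0$ (resp.\ the nodal evaluation) in $H^1(\omega_\sigma)$, followed by the $\cR^1_\sigma$ bound $\|\cR_\sigma^1 \bm \xi\|_{H^1(\omega_\sigma)} \lesssim \|\bm \xi\|_{H(\curl,\omega_\sigma)}$ from \Cref{lem:bounds-3d}, yields $\|\widehat{\pi}^1 \bm \xi\|_{H(\curl, \bm K)} \lesssim \|\bm \xi\|_{H(\curl, \omega_{\bm K}^{[1]})}$.

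For the two families of correction terms in \eqref{eq:pi1-3d} I would use the same chain on each $\sigma$ (or $\bm e$). Peel off $L_\sigma^1$ (resp.\ the edge functional $\varphi_{\bm e}$) via \eqref{eq:dofbound-3d}/\eqref{eq:dofbound2-3d}, then peel off $(I - \widehat{\pi}^1)$ by combining the identity with the leading-term bound from the previous paragraph applied on the patch $\omega_\sigma^{[1]}$, giving a factor bounded by $\|\cR_{\sigma,[1]}^2 \curl \bm \xi\|_{H(\curl, \omega_\sigma^{[1]})}$. Finally, \Cref{lem:bounds-3d} dominates this by $\|\curl \bm \xi\|_{H(\div, \omega_\sigma^{[1]})}$, and since $\div \curl = 0$ this equals $\|\curl \bm \xi\|_{L^2(\omega_\sigma^{[1]})} \leq \|\bm \xi\|_{H(\curl, \omega_\sigma^{[1]})}$.

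To conclude, I would invoke the patch inclusions: for a subsimplex $\sigma \subseteq \overline{\bm K}$ one has $\omega_\sigma \subseteq \omega_{\bm K}^{[1]}$, and consequently $\omega_\sigma^{[1]} \subseteq \omega_{\bm K}^{[2]}$. Therefore all local norms above can be replaced by the single norm $\|\bm \xi\|_{H(\curl, \omega_{\bm K}^{[2]})}$ and the sum over the bounded family of subsimplices and edges of $\bm K$ costs only a multiplicative constant depending on the shape regularity. The main technical obstacle is the careful bookkeeping of these nested patches and verifying that the bound on $\widehat{\pi}^1$ can be re-used on each $\omega_\sigma^{[1]}$ with the enlarged patch still fitting inside $\omega_{\bm K}^{[2]}$; once this is in place the proof is a direct chain of the already-established inequalities.
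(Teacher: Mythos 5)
Your proposal is correct and takes essentially the same approach as the paper: bound $\widehat{\pi}^1$ termwise via \Cref{lem:boundsM-3d}, \eqref{eq:dofbound-3d}--\eqref{eq:dofbound2-3d} and \eqref{eq:bounds-3d-Rk}, then estimate the correction terms of \eqref{eq:pi1-3d} by peeling off $L_{\sigma}^1$ (resp.\ the edge functional) and $(I-\widehat{\pi}^1)$ and invoking the auxiliary stability bounds, finishing with the patch inclusions into $\omega_{\bm K}^{[2]}$. The only cosmetic difference is that you treat the $\sigma$-corrections through the simplified form $\cR^2_{\sigma,[1]}\curl\bm\xi$ together with $\div\curl=0$, whereas the paper uses the $\cQ^1_{\sigma,[1]}$ stability from \eqref{eq:bounds-3d-Qk} for that term (and the $\cR^2$ form only for the edge term); the two chains are equivalent given \Cref{lem:bounds-3d}.
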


\begin{proof}
	First, consider the boundness of $\hat{\pi}^1$ defined in \eqref{eq:hatpi1-3d}. Clearly it follows from \Cref{lem:boundsM-3d} that the last term in the construction \eqref{eq:hatpi1-3d} can be estimated as 
$$ \|\avg^1 \bm \xi\|_{H(\curl,\bm K)} \lesssim \|\bm \xi\|_{H(\curl,\omega_{\bm K}^{[1]})}.$$

By \eqref{eq:dofbound-3d} and \eqref{eq:bounds-3d-Rk}, the first term in \eqref{eq:hatpi1-3d} can be estimated as 
$$ \| \nabla L_{\sigma}^0 \mathcal R_{\sigma}^1 \bm \xi\|_{H(\curl, \bm K)} \le \| L_{\sigma}^0 \mathcal R_{\sigma}^1 \bm \xi\|_{H^1(\bm K)} \lesssim \|\mathcal R_{\sigma}^1 \bm \xi\|_{H^1(\bm \omega_{\sigma})} \lesssim \|\bm \xi\|_{H(\curl,\omega_{\sigma})}$$ for any $\sigma \in \bm K$,
and similarly the second term can be analyzed as
$$\| (\mathcal R_{\bm x}^1 \bm \xi)(\bm x)\nabla \varphi_{\bm x}\|_{H(\curl, \bm K)} \lesssim \|\bm \xi\|_{H(\curl,\omega_{\bm x})}.$$
Therefore, it holds that
\begin{equation}
	\|\widehat \pi^1 \bm \xi \|_{H(\curl,\bm K)} \lesssim \| \bm \xi\|_{H(\curl,\omega_{\bm K}^{[1]})},
\end{equation}
and hence
\begin{equation}
	\|(I - \widehat \pi^1) \bm \xi \|_{H(\curl,\bm K)} \lesssim \| \bm \xi\|_{H(\curl,\omega_{\bm K}^{[1]})}.
\end{equation}

It is now ready to bound $\pi^1 \bm \xi$ of \eqref{eq:pi1-3d}. Since for all subsimplices $\sigma$ of $\bm K$ it holds that $\omega_{\bm K}^{[1]} \subset \omega_{\sigma}^{[1]}$, it follows that the second term of \eqref{eq:pi1-3d} is estimated as
\begin{equation}
	\begin{split}
		\|L _{\sigma}^1(I - \widehat{\pi}^1) \cQ_{\sigma,[1]}^1  \bm \xi\|_{H(\curl, \bm K)} & \lesssim \|(I - \widehat{\pi}^1) \cQ_{\sigma,[1]}^1  \bm \xi\|_{H(\curl, \omega_{\sigma})} \\
		& \lesssim \|\cQ_{\sigma,[1]}^1 \bm \xi\|_{H(\curl, \omega_{\sigma}^{[1]})} \\
		& \lesssim \|\bm \xi\|_{H(\curl,\omega_{\sigma}^{[1]})},
	\end{split}
\end{equation}
and similarly, the third term is bounded as
\begin{equation}
	\| ( (I - \widehat{\pi}^1) \cR_{\bm e,[1]}^2 \curl \bm \xi,\bm t)_{\bm e}\varphi_{\bm e} \|_{H(\curl, \bm K)} \lesssim \|\bm \xi \|_{H(\curl,\omega_{e}^{[1]})}
\end{equation}
As a result,
\begin{equation}
	\|\pi^1\bm \xi\|_{H(\curl, \bm K)} \lesssim \|\bm \xi\|_{H(\curl,\omega_{\bm K}^{[2]})}.
\end{equation}
This completes the proof.
\end{proof}
Next, consider the operator $\pi^2$ determined in \eqref{eq:pi2-3d}.

\begin{lemma} For $\bm v \in H(\div, \omega_{\bm K}^{[3]}; \mathbb R^3)$, it holds that 
	$\|\pi^2\bm v\|_{H(\div, \bm K)} \lesssim \|\bm v\|_{H(\div,\omega_{\bm K}^{[3]})}.$
\end{lemma}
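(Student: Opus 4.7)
I would mirror the structure of the preceding lemma that bounds $\pi^1$, treating the three summands of $\pi^2 \bm v$ in \eqref{eq:pi2-3d} separately: the ``bulk'' term $\widehat{\pi}^2 \bm v$, the volumetric correction $\sum_\sigma L_\sigma^2 (I - \widehat{\pi}^2)\cR^3_{\sigma,[2]}\div\bm v$, and the face-moment correction $\sum_{\bm f} ((I - \widehat\pi^2)\cR^3_{\sigma,[2]}\div\bm v,\bm n)_{\bm f}\varphi_{\bm f}$. Throughout, I would use the uniform local bounds of Lemma~\ref{lem:bounds-3d} and Lemma~\ref{lem:boundsM-3d}, the local degree-of-freedom bounds \eqref{eq:dofbound-3d}--\eqref{eq:dofbound2-3d}, the previously obtained $H(\curl)$-bound on $\widehat\pi^1$, and the inclusions $\omega_\sigma^{[\ell]} \subset \omega_{\bm K}^{[\ell+1]}$ for subsimplices $\sigma$ of $\bm K$.

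\textbf{Step 1: Estimate $\widehat{\pi}^2 \bm v$ on $\bm K$.} By Lemma~\ref{lem:boundsM-3d}, $\|\avg^2 \bm v\|_{H(\div,\bm K)} \lesssim \|\bm v\|_{H(\div,\omega_{\bm K}^{[1]})}$. For each of the remaining summands of \eqref{eq:hatpi2v-3d} only subsimplices $\sigma,\bm e$ of $\bm K$ contribute (by locality of $L_\sigma^1$ and $\curl\varphi_{\bm e}$). I bound $\|\cR^2_{\sigma,[1]}\bm v\|_{H(\curl,\omega_\sigma^{[1]})} \lesssim \|\bm v\|_{H(\div,\omega_\sigma^{[1]})}$ via \eqref{eq:bounds-3d-Rk}, then apply the $H(\curl)$-continuity of $I-\widehat\pi^1$ from the previous lemma, then use \eqref{eq:dofbound-3d} to bound $L_\sigma^1$, and finally pass through $\curl$ into $L^2(\bm K)$. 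Collecting everything and using $\omega_\sigma^{[1]}\subset \omega_{\bm K}^{[2]}$ yields
\[
\|\widehat{\pi}^2 \bm v\|_{H(\div,\bm K)} \lesssim \|\bm v\|_{H(\div,\omega_{\bm K}^{[2]})}.
\]
An identical argument localized to any patch $\omega_\tau^{[2]}$ gives the uniform bound $\|\widehat\pi^2 w\|_{H(\div,\omega_\tau)} \lesssim \|w\|_{H(\div,\omega_\tau^{[2]})}$ for functions $w$ defined on $\omega_\tau^{[2]}$.

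\textbf{Step 2: Estimate the corrections.} For each subsimplex $\sigma$ of $\bm K$, I apply \eqref{eq:bounds-3d-Rk} to get $\|\cR^3_{\sigma,[2]}\div\bm v\|_{H(\div,\omega_\sigma^{[2]})} \lesssim \|\div\bm v\|_{L^2(\omega_\sigma^{[2]})}$. Combining with the local bound on $\widehat\pi^2$ from the end of Step~1, I obtain $\|(I-\widehat\pi^2)\cR^3_{\sigma,[2]}\div\bm v\|_{H(\div,\omega_\sigma)} \lesssim \|\div\bm v\|_{L^2(\omega_\sigma^{[2]})}$. Then \eqref{eq:dofbound-3d} (applied to $L_\sigma^2$) and \eqref{eq:dofbound2-3d} (applied to the face moment $\varphi_{\bm f}$) control the two correction terms by the same quantity. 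Summing over the bounded number of contributing simplices and using $\omega_\sigma^{[2]} \subset \omega_{\bm K}^{[3]}$ gives
\[
\Big\|\sum_\sigma L_\sigma^2(I-\widehat\pi^2)\cR^3_{\sigma,[2]}\div\bm v\Big\|_{H(\div,\bm K)} + \Big\|\sum_{\bm f}((I-\widehat\pi^2)\cR^3_{\sigma,[2]}\div\bm v,\bm n)_{\bm f}\varphi_{\bm f}\Big\|_{H(\div,\bm K)} \lesssim \|\div\bm v\|_{L^2(\omega_{\bm K}^{[3]})}.
\]
Adding the Step~1 bound yields $\|\pi^2\bm v\|_{H(\div,\bm K)} \lesssim \|\bm v\|_{H(\div,\omega_{\bm K}^{[3]})}$, as required.

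\textbf{Main obstacle.} The only real subtlety is bookkeeping the patches through the nested operators. Since $\widehat\pi^2$ itself has locality $[2]$, feeding it the function $\cR^3_{\sigma,[2]}\div\bm v$ (defined on $\omega_\sigma^{[2]}$) is consistent, but requires the localized version of Step~1 with a constant uniform in the patch size and depending only on shape regularity; this is where I would be most careful. Once that uniform local bound is in hand, the remainder is a straightforward bookkeeping exercise analogous to the $\pi^1$ lemma above.
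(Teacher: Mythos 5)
Your proposal is correct and follows essentially the same route as the paper: bound $\widehat{\pi}^2\bm v$ exactly as you do in Step~1, then control the two correction terms through the local bounds \eqref{eq:dofbound-3d}--\eqref{eq:dofbound2-3d}, the localized continuity of $I-\widehat{\pi}^2$, and the patch inclusions $\omega_{\sigma}^{[2]}\subset\omega_{\bm K}^{[3]}$. The only (immaterial) difference is that you estimate the simplified second expression in \eqref{eq:pi2-3d} via $\cR^{3}_{\sigma,[2]}\div\bm v$ and \eqref{eq:bounds-3d-Rk}, whereas the paper works with the first expression $\cQ^{2}_{\sigma,[2]}\bm v$ and \eqref{eq:bounds-3d-Qk}; since the two expressions coincide, both yield the stated bound.
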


\begin{proof}
First, given $\bm v \in H(\div, \Omega; \mathbb R^3)$, it holds that the first term of \eqref{eq:hatpi2v-3d} is bounded as
\begin{equation}
	\|\avg^2 \bm v\|_{H(\div, \bm K)} \lesssim \|\bm v\|_{H(\div, \omega_{\bm K}^{[1]})},
\end{equation}
and that the second term of \eqref{eq:hatpi2v-3d} is analyzed as
\begin{equation}
	\begin{split}
		\| \curl L_{\sigma}^1 (I - \widehat{\pi}^1)\cR^{2}_{\sigma,[1]} \bm v\|_{H(\div, \bm K)} & \lesssim \|L_{\sigma}^1 (I - \widehat{\pi}^1) \cR^{2}_{\sigma,[1]}  \bm v\|_{H(\curl, \bm K)} \\
		& \lesssim \|(I - \widehat{\pi}^1) \cR^{2}_{\sigma,[1]}  \bm v \|_{H(\curl, \omega_{\bm \sigma})}\\
		& \lesssim \|\cR^{2}_{\sigma,[1]} \bm v\|_{H(\curl, \omega_{\sigma}^{[1]})} \\
		& \lesssim \|\bm v\|_{H(\div, \omega_{\sigma}^{[1]})}.
	\end{split}
\end{equation}
Similarly, the last term of $\hat{\pi}^2 \bm v$ can be estimated as
$$\|((I - \widehat{\pi}^1)\cR^{2}_{\bm e,[1]}\bm v \cdot \bm t,1)_{\bm e} \curl \varphi_{\bm e}\|_{H(\div, \bm K)} \lesssim \|\bm v\|_{H(\div, \omega_{\bm e}^{[1]})}.$$
Therefore, a summation of the above three estimates yields
\begin{equation}
	\|\widehat{\pi}^2 \bm v \|_{H(\div, \bm K)} \lesssim \|\bm v\|_{H(\div, \omega_{\bm K}^{[2]})}.
\end{equation}
Now, consider the estimates of $\pi^2\bm v$ defined in \eqref{eq:pi2-3d}. Since the second term of \eqref{eq:pi2-3d} is bounded by 
\begin{equation}
	\begin{split}
		\| L_{\sigma}^2(I - \widehat{\pi}^2) \cQ^2_{\sigma,[2]}  \bm v\|_{H(\div, \bm K)} & \lesssim \|(I - \widehat{\pi}^2)  \cQ^2_{\sigma,[2]}   \bm v\|_{H(\div, \omega_{\sigma})} \\
		& \lesssim \| \cQ^2_{\sigma,[2]}   \bm v\|_{H(\div, \omega_{\sigma}^{[2]})} \\
		& \lesssim \|\bm v\|_{H(\div, \omega_{\sigma}^{[2]})}, 
	\end{split}
\end{equation}
and third term of \eqref{eq:pi2-3d} is estimated as
\begin{equation}
	\|( (I - \widehat{\pi}^2)\cQ^2_{\sigma,[2]}  \bm v,\bm n)_{\bm f}\bm \varphi_{\bm f}\|_{H(\div,\bm K)} \lesssim \|\bm v\|_{H(\div, \omega_{\bm f}^{[2]})}.
\end{equation}
It follows that
\begin{equation}
	\| \pi^2 \bm v \|_{H(\div, \bm K)} \lesssim \| \bm v \|_{H(\div,\omega_{\bm K}^{[3]})}.
\end{equation}
\end{proof}

Finally, consider the estimation of $\pi^3$ defined in \eqref{eq:pi3p-3d}.
\begin{lemma} For $p \in L^2(\omega_{\bm K}^{[3]})$, it holds that
	$\|\pi^3p\|_{L^2(\bm K)} \lesssim \|p\|_{L^2(\omega_{\bm K}^{[3]})}.$
\end{lemma}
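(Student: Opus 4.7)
The plan is to decompose $\pi^3 p$ as in \eqref{eq:pi3p-3d} into three pieces and bound each separately, mirroring the template used in the previous three lemmas. Namely, write
\[
\pi^3 p = \avg^3 p + \sum_{\sigma} \div L_{\sigma}^{2}(I-\widehat{\pi}^2)\cR^{3}_{\sigma,[2]} p + \sum_{\bm f} \bigl((I-\widehat{\pi}^2)\cR^{3}_{\bm f,[2]} p,\bm n\bigr)_{\bm f}\div\varphi_{\bm f},
\]
where the sums run over subsimplices and faces of $\bm K$. For each term I would take the $L^2(\bm K)$ norm, trivially estimate $\|\div(\cdot)\|_{L^2(\bm K)} \le \|\cdot\|_{H(\div,\bm K)}$ when a divergence appears, and then pass to local norms on $\omega_{\sigma}$ and ultimately to $\omega_{\bm K}^{[3]}$.

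The first term is handled immediately by \Cref{lem:boundsM-3d}, giving $\|\avg^3 p\|_{L^2(\bm K)}\lesssim \|p\|_{L^2(\omega_{\bm K}^{[1]})}$. For the second term, the argument is a chain of three estimates already in place: local boundedness of $L_{\sigma}^{2}$ on $\omega_{\sigma}$ from \eqref{eq:dofbound-3d}; $H(\div)$-boundedness of $\widehat{\pi}^2$ in the form $\|\widehat{\pi}^2 \bm v\|_{H(\div,\omega_{\sigma})}\lesssim \|\bm v\|_{H(\div,\omega_{\sigma}^{[2]})}$, which is exactly the intermediate bound proved in the previous lemma (applied on the patch $\omega_{\sigma}$ instead of $\bm K$); and $\|\cR^{3}_{\sigma,[2]} p\|_{H(\div,\omega_{\sigma}^{[2]})}\lesssim \|p\|_{L^2(\omega_{\sigma}^{[2]})}$ from \Cref{lem:bounds-3d}. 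Concatenating these yields a bound by $\|p\|_{L^2(\omega_{\sigma}^{[2]})}$, and summing over the (uniformly bounded number of) subsimplices $\sigma \subset \bm K$ gives a bound by $\|p\|_{L^2(\omega_{\bm K}^{[3]})}$ since $\omega_{\sigma}^{[2]} \subset \omega_{\bm K}^{[3]}$.

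For the third term I would use \eqref{eq:dofbound2-3d}, which bounds $\|(\bm v\cdot \bm n,1)_{\bm f}\varphi_{\bm f}\|_{H(\div,\omega_{\bm f})}$ by $\|\bm v\|_{H(\div,\omega_{\bm f})}$; in particular $\|((\cdot\,,\bm n)_{\bm f}\div\varphi_{\bm f})\|_{L^2(\bm K)}$ is dominated by the same $H(\div)$-norm. Applied with $\bm v = (I-\widehat{\pi}^2)\cR^{3}_{\bm f,[2]} p$, together with the same two-step bound on $(I-\widehat{\pi}^2)\cR^{3}_{\bm f,[2]}$ used above, this gives control by $\|p\|_{L^2(\omega_{\bm f}^{[2]})}$, and summing over faces of $\bm K$ produces the desired $\|p\|_{L^2(\omega_{\bm K}^{[3]})}$ bound.

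I do not anticipate a genuine obstacle here: every ingredient has been set up in \Cref{lem:bounds-3d}, \Cref{lem:boundsM-3d}, \eqref{eq:dofbound-3d}, \eqref{eq:dofbound2-3d}, and the previous lemma on $\widehat{\pi}^2$. The only mild care required is bookkeeping of patch sizes, to confirm that each of $\omega_{\bm K}^{[1]}$, $\omega_{\sigma}^{[2]}$, and $\omega_{\bm f}^{[2]}$ is contained in $\omega_{\bm K}^{[3]}$ so that all contributions can be collected into a single right-hand side; this follows directly from $\sigma, \bm f \subset \overline{\bm K}$ and the definition of the extended patches.
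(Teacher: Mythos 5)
Your proof is correct, but it takes a genuinely different route from the paper. The paper does not return to the explicit formula \eqref{eq:pi3p-3d} at all: it exploits the surjectivity of $\div$, choosing $\bm v \in H(\div,\omega_{\bm K}^{[3]};\mathbb R^3)$ with $\div \bm v = p$, and then uses the commuting identity $\pi^3 p = \pi^3\div\bm v = \div\pi^2\bm v$ together with the already-proven local bound on $\pi^2$, so the whole lemma is two lines long — at the price of implicitly needing a right inverse of $\div$ that is bounded from $L^2(\omega_{\bm K}^{[3]})$ to $H(\div,\omega_{\bm K}^{[3]})$ (a point the paper glosses; its final inequality chain even ends with $\|\bm v\|_{L^2}$ where $\|p\|_{L^2}$ is clearly intended). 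Your term-by-term estimate of \eqref{eq:pi3p-3d}, following the same template as the $\pi^1$ and $\pi^2$ lemmas, avoids any appeal to a bounded potential $\bm v$ and is therefore more self-contained, at the cost of being longer. The only point you should make explicit is the reuse of the intermediate bound on $\widehat{\pi}^2$: the previous lemma proves $\|\widehat{\pi}^2\bm v\|_{H(\div,\bm K)} \lesssim \|\bm v\|_{H(\div,\omega_{\bm K}^{[2]})}$ element by element, so to get $\|(I-\widehat{\pi}^2)\bm w\|_{H(\div,\omega_{\sigma})} \lesssim \|\bm w\|_{H(\div,\omega_{\sigma}^{[2]})}$ you should say that you sum the element-wise bounds over the uniformly bounded number of elements $\bm K' \subset \omega_{\sigma}$ and use $\omega_{\bm K'}^{[2]} \subset \omega_{\sigma}^{[2]}$; with that remark, and your (correct) patch bookkeeping $\omega_{\sigma}^{[2]}, \omega_{\bm f}^{[2]}, \omega_{\bm K}^{[1]} \subset \omega_{\bm K}^{[3]}$, the argument is complete.
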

\begin{proof}
The estimate of $\pi^3p$ can be derived from that of $\pi^2\bm v$, and the surjectivity of the div operator. Since for $p \in L^2(\omega_{\bm K}^{[3]})$ there exists $\bm v \in H(\div, \omega_{\bm K}^{[3]}; \mathbb R^3)$ such that $\div \bm v = p$. Hence, it follows that
\begin{equation}
	\begin{split}
		\|\pi^3 p\|_{L^2(\bm K)} = \| \pi^3 \div \bm v \|_{L^2(\bm K)} = \| \div \pi^2 \bm v \|_{L^2(\bm K)}
		\lesssim \| \bm v \|_{H(\div,\omega_{\bm K}^{[3]})} \lesssim \| \bm v \|_{L^2(\omega_{\bm K}^{[3]})}.
	\end{split}
\end{equation}
\end{proof}
\section{Applications in Two Dimensions}
\label{sec:2D}
Without otherwise specified, the exactness of the finite element complexes discussed in this section can be referred to \cite{2018Arnold}, and \cite{2018ChristiansenHuHu}. Therefore, Assumptions (A1) and (A4) hold for the applications in this section. Some modifications are performed when introducing the degrees of freedom of the elements. Let $\lambda_0,\lambda_1,\lambda_2$ be the barycenter coordinates of $\bm f$, and $\lambda_0,\lambda_1$ be the barycenter coordinates of $\bm e$.

In what follows, the non-bold notation $P_k(\bm f)$ stands for the space of polynomials of degree $\le k$ over $\bm f$, and the non-bold  notation $RT_k(\bm f) = [P_k(\bm f)]^2 + [x_1,x_2]^T P_k(\bm f)$ stands for the Raviart--Thomas element shape function space over $\bm f$. For convenience, both ends of the complex are omitted. For the standard finite element complexes, $\mathbb R \to $ and $\to 0$ are omitted, and for the bubble complexes, $0 \to $ and $\to 0$ are omitted. For clarity, the degrees of freedom $u(\bm x)$, $(\bm v\cdot \bm n,1)_{\bm e}$, $(p,1)_{\bm f}$ will be put first when introducing the degrees of freedom.

\subsection{The Lagrange--Raviart--Thomas Complex}
\label{sec:lrt}
This subsection considers the interpolation operators of the Lagrange--Raviart--Thomas complex \cite{2006ArnoldFalkWinther} for $k \ge 1$:
\begin{equation}
	\tag{{\sf LRT}}
	\mathbf P_{k} \xrightarrow{\curl} \mathbf{RT}_{k-1} \xrightarrow{\div} \mathbf P_{k-1}^{-}.
\end{equation}
Here
$$\mathbf P_k := \{ u \in C^0(\Omega) :  u|_{\bm f} \in P_k(\bm f), \, \forall \bm f \in \mathcal T\},$$
$$\mathbf{RT}_{k-1} := \{ \bm v \in H(\div, \Omega; \mathbb R^2) : \bm v|_{\bm f}\in RT_{k-1}(\bm f), \, \forall \bm f \in \mathcal T \},$$and
$$\mathbf P_{k-1}^- := \{ p \in L^2(\Omega) : p|_{\bm f} \in P_{k-1}(\bm f),\, \forall \bm f \in \mathcal T\}.$$

For the lowest order case with $k = 1$, there are no other degrees of freedom but $u(\bm x), (\bm v\cdot \bm n,1)_{e},  (p,1)_{\bm f}$, for $u \in \mathbf P_k$, $\bm v \in \mathbf{RT}_{k-1}$, $p \in \mathbf{P}_{k-1}^{-}$, respectively. Hence, Assumptions (A1)-(A4) are reduced to the exactness of the finite element complex, namely, Assumption (A1). 

For the higher-order case with $k > 1$, the corresponding degrees of freedom are given below.

The degrees of freedom of the $H^1$ finite element space $\mathbf P_k$ are defined by:
for $u \in P_k(\bm f)$, define the degrees of freedom as, for any $\bm f$,
\begin{enumerate}
	\item[-] the function value $u(\bm x)$ at each vertex $\bm x$ of $\bm f$, and no other degrees of freedom $f_{\bm x}^0$ are imposed;
	\item[-] on each edge $\bm e$ of $\bm f$, set
		$ f_{\bm e,i}^0(u) = (\frac{\partial u}{\partial \bm t},\frac{\partial b_{\bm e}^i}{\partial \bm t} )_{\bm e},$
		where $b_{\bm e}^i, i = 1,\cdots, k-1$, form a basis of the edge bubble function space $B_{\bm e, k-2} := \{ p \in P_k(e): p({\bm x}) = p({\bm y}) = 0\} = \lambda_0\lambda_1 P_{k-2}(\bm e),$ where $\bm x$ and $\bm y$ are the two endpoints of $\bm e $;
	\item[-] in $\bm f$, set
		$f_{\bm f, i}^0(u) = (\curl u,\curl b_{\bm f}^i),$
		where $b_{\bm f}^i, i = 1,2,\cdots, \frac{1}{2}(k-2)(k-1)$, form a basis of the face bubble function space $B_{\bm f,k-3} = \{ p \in P_k(f): p|_{\bm e} = 0, \forall \bm e \subset \partial \bm f \}= (\lambda_0\lambda_1\lambda_2) P_{k-3}(\bm f).$
\end{enumerate}

The degrees of freedom of the $H(\div)$ finite element space $\mathbf{RT}_{k-1}$ are defined by: for $\bm v \in RT_{k-1}(\bm f)$, define the degrees of freedom as follows, for any $\bm f$,
\begin{itemize}
	\item[-] the moment of the normal component $(\bm v\cdot \bm n,1)_{\bm e}$ for each edge $\bm e$ of $\bm f$;
	\item[-] on each edge $\bm e$ of $\bm f$, set
		$f_{\bm e, i}^1(\bm v) = (\bm v \cdot \bm n, \frac{\partial b_{\bm e}^i}{\partial \bm t})_{\bm e},$
		where $\frac{\partial b_{\bm e}^i}{\partial \bm t} , i = 1,2,\cdots, k-1$, form a basis of $P_{k-1}(\bm e)/\R$;
	\item[-] in $\bm f$, set
		$f_{\bm f, i}^1(\bm v) = (\bm v, \curl b_{\bm f}^i)_{\bm e},$
		where $\curl b_{\bm f}^i, i = 1,\cdots, \frac{1}{2}(k-2)(k-1) $, form a basis of $\curl B_{\bm f,k-3}$
		and
		$f_{\bm f, i}^1(\bm v) = (\div \bm v, q_i)_{\bm f}$
		where $q_i, i =  \frac{1}{2}(k-1)(k-2) + 1,\cdots, \frac{1}{2}(k-1)(k-2) + \frac{1}{2}k(k+1)-1$, form a basis of $P_{k-1}(\bm f) / \R$.
\end{itemize}

The degrees of freedom of the $L^2$ element space $\mathbf{P}_{k-1}^-$ are defined by: for $p \in P_{k-1}(\bm f)$, define the degrees of freedom as follows, for any $\bm f$,
\begin{itemize}
	\item[-] the moment on of $p$ in each face $\bm f$, namely, $(p,1)_{\bm f}$;
	\item[-] let $f_{\bm f,i}^2(p) = (p,q_i)_{\bm f}$, where $q_i, i =1,2,\cdots,\frac{1}{2}k(k+1)-1$, form a basis of $P_{k-1}(\bm f) / \R$.
\end{itemize}

We now verify Assumptions (A1)-(A4). Clearly, all $f_{\sigma}^0$ satisfy that $L_{\sigma}^0 (\R) = 0$. 
For any $\xi_{\bm e}^i \in P_{k-1}(\bm e)/\R$, there exists $b \in B_{\bm e,k-2}$ such that $\frac{\partial b}{\partial \bm t} = \xi_{\bm e}^i$. Therefore, $f_{\bm e, i}^1(\curl \varphi_{\bm x}) = (\curl \varphi_{\bm x} \cdot \bm n, \frac{\partial b}{\partial \bm t}) = 0,$ since $\varphi_{\bm x}$ vanishes for the degrees of freedom $f_{\bm e}^0$. Also, for $i \leq \frac{1}{2}(k-2)(k-1)$, it holds that $f_{\bm f,i}^1(\curl \varphi_{\bm x}) = (\curl \varphi_{\bm x}, \curl b_{\bm f, i}^1) = 0$. While for other $i$, $f_{\bm f,i}^1(\curl \varphi_{\bm x}) = 0$ since $\div\curl \varphi_{\bm x} = 0$. The verification of $f_{\bm f,i}^2(\div \varphi_{\bm e}) = 0$ is straightforward.

\subsection{The Lagrange--Brezzi--Douglas--Marini Complex}
\label{sec:lbdm}
Consider the following finite element complex for $k \ge 2$,
\begin{equation}
	\label{eq:lbdm}
	\tag{{\sf LBDM}}
	\mathbf P_{k} \xrightarrow{\curl} \mathbf{BDM}_{k-1} \xrightarrow{\div}\mathbf P_{k-2}^{-},
\end{equation}
where $\mathbf{BDM}_{k-1} := \{ \bm v \in H(\div,\Omega;\mathbb R^2) : \bm v |_{\bm f} \in [P_{k-1}(\bm f)]^2,\,\forall \bm f \in \mathcal T \}.$ 

For the $H(\div)$ finite element space $\mathbf{BDM}_{k-1}$, the degrees of freedom are defined by: for $\bm v \in [P_k(\bm f)]^2$, define the degrees of freedom as, for any $\bm f$,
\begin{itemize}
	\item[-] the moment of the normal component on each edge $\bm e$ of $\bm f$, namely, $(\bm v\cdot \bm n,1)_{\bm e}$;
	\item[-] on each edge $\bm e$ of $\bm f$, set
		$f_{\bm e, i}^1(\bm v) = (\bm v \cdot \bm n, \frac{\partial b_{\bm e}^i}{\partial \bm t})_{\bm e}$,
		where $\frac{\partial b_{\bm e}^i}{\partial \bm t} , i = 1,2,\cdots, k-1$, form a basis of $P_{k-1}(\bm e)/\R$.
	\item[-] inside $\bm f$, set
		$f_{\bm f, i}^1(\bm v) = ( \bm v, \curl b_{\bm f}^i)_{\bm f},$
		where $\curl b_{\bm f}^i, i = 1,\cdots, \frac{1}{2}(k-2)(k-1) $, form a basis of $\curl B_{\bm f,k-3}$,
		and
		$f_{\bm f,i}^1(\bm v) = (\div \bm v, q_i)_{\bm f}$,
		where $q_i, i =  \frac{1}{2}(k-2)(k-1) +  1 ,\cdots,\frac{1}{2}(k-2)(k-1) +  \frac{1}{2}(k-1)k-1$, form a basis of $P_{k-2}(\bm f) / \R$.
\end{itemize}

It is easy to see that this complex satisfies Assumptions (A1)-(A4).

\subsection{The Hermite--Stenberg Complex}
\label{sec:hs}

In this subsection, we explore the Hermite-Stenberg complex, a finite element complex in two dimensions that enhances local regularity at vertices. Within our framework, the additional degrees of freedom at vertices can be treated as the same as those on edges and faces. The Hermite-Stenberg complex was initially introduced in \cite{2018ChristiansenHuHu} for $k\geq 3$,

\begin{equation}
	\label{eq:hs}
	\tag{{\sf HS}}
	\mathbf {Hm}_{k} \xrightarrow{\curl} \mathbf{St}_{k-1} \xrightarrow{\div} \mathbf P_{k-2}^{-},
\end{equation}
	where 
	$$\mathbf{Hm}_{k} := \{ u \in C^0(\Omega) : u|_{\bm f} \in P_k(\bm f), \forall \bm f \in \mathcal T, u \text{ is } C^1 \text{ at each vertex of } \mathcal T\},$$
	and  
	$$\mathbf{St}_{k-1} := \{ \bm v \in H(\div,\Omega;\mathbb R^2) : \bm v|_{\bm f} \in [P_{k-1}(\bm f)]^2, \forall \bm f \in \mathcal T, \bm v \text{ is } C^0  \text{ at each vertex of } \mathcal T\}.$$

Next, we demonstrate how this finite element complex can be incorporated into our proposed framework.

For the $H^1$ finite element space $\mathbf{Hm}_k$, the degrees of freedom are defined by: for $u \in P_k(\bm f)$, the degrees of freedom are defined as, for any $\bm f$, 
\begin{itemize}
	\item[-] the function value at each vertex $\bm{x}$ of $\bm f$;
	\item[-] at each vertex $\bm x$ of $\bm f$, define $f_{\bm x,1}^0(u) = \frac{\partial u}{\partial x}(\bm x)$ and  $f_{\bm x,2}^0(u) = \frac{\partial u}{\partial y}(\bm x)$;
	\item[-] on each edge $\bm e$ of $\bm f$, define $f_{\bm e,i}^0(u) = (\frac{\partial u}{\partial \bm t},\frac{\partial b_{\bm e}^i}{\partial \bm t} )_{\bm e}$
		where $b_{\bm e}^i, i = 1,\cdots, k-3$, form a basis of edge bubble $B^{1}_{\bm e,k-4} := \{ p \in P_k(\bm e): u(\bm x) = u'(\bm x) = u(\bm y) = u'(\bm y) = 0\} = (\lambda_0\lambda_1)^2 P_{k-4}(\bm e),$ where $\bm x$ and $\bm y$ are the two endpoints of $\bm e$;
	\item[-] inside $\bm f$, define
		$f_{\bm f, i}^0(u) = (\operatorname{curl}u,\operatorname{curl}b_{\bm f}^i)_{\bm{f}}$
		where $b_{\bm f}^i, i = 1,\cdots, \frac{1}{2}(k-1)(k-2) $, form a basis of $B_{\bm f, k-3}  := (\lambda_0\lambda_1\lambda_2) P_{k-3}(\bm f).$
\end{itemize}

For the $H(\div)$ Stenberg element space $\mathbf{St}_{k-1}$, the degrees of freedom are defined by: for $\bm v = [v_x, v_y]^T \in [P_{k-1}(\bm f)]^2$, define the degrees of freedom for any $\bm f$,
\begin{itemize}
	\item[-] the moment of the normal component on each edge $\bm v $ of $\bm f$, $(\bm v \cdot \bm n ,1)_{\bm e}$;
	\item[-] at each vertex $\bm x$ of $\bm f$, define $f_{\bm x, 0}^1(\bm v) =  v_{x}$ and $f_{\bm x,1}^1(\bm v) = v_y$, where $v_x$ and $v_y$ are two components of $\bm v$;
	\item[-] on each edge $\bm e$ of $\bm f$, set
		$f_{\bm e, i}^1(\bm v) = (\bm v \cdot \bm n, \frac{\partial b_{\bm e}^i}{\partial \bm t} )_{\bm e},$
		where $\frac{\partial b_{\bm e}^i}{\partial \bm t} , i = 1,2,\cdots, k-3$, form a basis of the space $B_{\bm e, k-3}/ \mathbb R$;
	\item[-] inside $\bm f$, set
		$f_{\bm f, i}^1(\bm v) = ( \bm v, \operatorname{curl}b_{\bm f}^i)_{\bm f},$
		where $\curl b_{\bm f}^i, i = 1,\cdots, \frac{1}{2}(k-2)(k-1) $, form a basis of $\curl B_{\bm f,k-3}$
		and
		$f_{\bm f,i}^1(\bm v) = (\div \bm v, q_i)_{\bm f}$
		where $q_i,i = \frac{1}{2}(k-2)(k-1) + 1 ,\cdots, \frac{1}{2}(k-2)(k-1) + \frac{1}{2}(k-1)k-1$, form a basis of $P_{k-2}(\bm f) / \R$.
\end{itemize}
For this case, Assumptions (A1)-(A4) can be similarly verified.

\subsection{A bubble complex viewpoint}
\label{sec:bubble-complex}

The rest of this section discusses how to verify Assumption (A2) conveniently.
A key point is the so-called bubble complex related to edges and faces, which will be introduced below. Define the face $H(\div)$ bubble
$$B^{BDM}_{\bm f,k-1} = \{\bm v \in [P_{k-1}(\bm f)]^2 : \bm v \cdot \bm n = 0 \text{ on } \partial \bm f\}.$$

Then there exists the following exact face bubble complex for both  the LBDM complex in \Cref{sec:lbdm} and the HS complex in \Cref{sec:hs}:
\begin{equation}
	B_{\bm f,k-3} \xrightarrow{\curl} B^{BDM}_{\bm f,k-1} \xrightarrow{\div} P_{k-2}(\bm f)/\mathbb R,
\end{equation}
which is a standard result in the finite element exterior calculus \cite{2018Arnold}. There also exists the following edge bubble complex:
$$B_{\bm e,k-2} \xrightarrow{\partial/\partial \bm t} P_{k-1}(\bm e)/\mathbb R$$
for the LBDM complex and
$$B^{1}_{\bm e,k-4} \xrightarrow{\partial/\partial \bm t} B_{\bm e, k-3}/\mathbb R$$ for the HS complex.

The bubble complexes on edges and faces can be regarded as a geometric decomposition of the global finite element spaces \cite{2018ChristiansenHuHu}. In this paper, the bubble complexes give a systematic way to define the degrees of freedom, via the harmonic inner product introduced in \Cref{sec:auxproj}. For example, for the $H^1$ finite element, define 
$\LL u, v\RR_{0, \bm f} = (\curl u, \curl v)_{\bm f}$; for the $H(\div)$ element, define $$\LL \bm \xi, \bm \eta \RR_{1,\bm f} = (\mathcal P_{ \curl B_{\bm f, k -3}} \bm \xi, \mathcal P_{ \curl B_{\bm f, k -3}} \bm \eta)_{\bm f} + (\div \bm \xi, \div \bm \eta)_{\bm f}, \forall \bm \xi, \bm \eta \in [P_{k-1}(\bm f)]^2.$$ 
Then the degrees of freedom $f_{\bm f, i}^1(\bm v)$ can be defined as $\LL \bm v, b_i\RR_{1,\bm f}$, where $b_i$, $i = 1,2,\cdots, \frac{1}{2}(k-2)(k+1)$, form a basis of $B_{\bm f, k - 1}^{BDM}$. 
Now see how the harmonic inner products and the exactness of the bubble complexes on edges and faces can be used to examine Assumptions (A1)-(A4). We verify Assumption (A2) for the HS complex, but the degrees of freedom $f_{\bm f, i}^1(\bm v)$ are given by the harmonic inner products  $\LL \bm v, b_i\RR_{1,\bm f}$ defined above, where $b_i$, $i = 1,2,\cdots, \frac{1}{2}(k-2)(k+1)$, form a basis of $B_{\bm f, k - 1}^{BDM}$.
\begin{itemize}
\item[-] For $f_{\bm e, i}^1(\curl \varphi_{\bm x}) = (\frac{\partial}{\partial \bm t} \varphi_{\bm x}, \frac{\partial b_{\bm e}^i}{\partial \bm t})_{\bm e}$ for some $\frac{\partial b_{\bm e}^i}{\partial \bm t} \in B_{\bm e, k-3} / \mathbb R$, it follows from the exactness of the edge bubble complex for the HS complex that $b_{\bm e}^i$ can be assumed in $B_{\bm e,k-4}^1$. By the definition of the degrees of freedom of the $H^1$ element in \Cref{sec:hs}, $f_{\bm e, i}^1(\curl \varphi_{\bm x}) = (\frac{\partial}{\partial \bm t} \varphi_{\bm x}, \frac{\partial}{\partial \bm t} b)_{\bm e} = 0.$ 

\item[-] For $f_{\bm f, i}^1(\curl \varphi_{\bm x}) = \LL \curl \varphi_{\bm x}, b_2 \RR_{1,\bm f}$ for some $b_2 \in B^{BDM}_{\bm f, k-1}$, then it follows from the exactness of the face bubble complex of the HS complex that there exist $b'' \in B_{\bm f, k-3}$ such that $\curl b'' = b_2$. By the definition of the degrees of freedom of the $H^1$ element in \Cref{sec:hs}, $f_{\bm f, i}^1(\curl \varphi_{\bm x}) = \LL \varphi_{\bm x}, b''\RR_{0, \bm f} =0 .$ 

\item[-] For $f_{\bm f, i}^2(\div \varphi_{\bm e}) = (\div \varphi_{\bm e}, q)_{\bm f}$ for some $q \in P_{k-2}(\bm f) / \mathbb R$, it follows from the exactness of the face bubble complex of the HS complex that there exists $\bm v \in B_{\bm f, k-1}^{BDM}$ such that 
$$\bm v \perp \curl B_{\bm f, k-3}$$
and that $$\div \bm v = q.$$
By the definition of the degrees of freedom of the $H(\div)$ element, it holds that 
$$f_{\bm f, i}^2(\div \varphi_{\bm e}) = (\div \varphi_{\bm e}, \div \bm v)_{\bm f} = \LL \varphi_{\bm e}, \bm v \RR_{1, \bm f} = 0.$$
\end{itemize}
The harmonic inner products and the bubble complex viewpoint play an important role in the remaining sections, especially on three dimensions, see Section~\ref{sec:3D}.

\subsection{The Argyris--Falk--Neilan Complex (Stokes Complex)}

To close the discussion in two dimensions, let us finally consider the following finite element Stokes complexes with enhanced global regularity. The bubble complex view point will play a crucial rule in simplifying the verification procedure. The degrees of freedom on edges of the $H^2$ finite element space and the $H^1$ finite element space will be split into two parts, which require different treatment. We will use the exactness of the bubble complex on edges, combining with some direct algebraic calculation with respect to the $\curl$ operator, to check Assumption (A2). The finite element Stokes complex was introduced by Falk and Neilan in \cite{2013FalkNeilan}, for $k \ge 5$,

\begin{equation}
	\tag{{\sf AFN}}
	\mathbf {Ar}_{k} \xrightarrow{\curl} \mathbf{FN}^{v}_{k-1} \xrightarrow{\div} \mathbf{FN}^p_{k-2},
\end{equation}
where	$$\mathbf{Ar}_{k} := \{ u \in C^1(\Omega) : u|_{\bm f} \in P_k(\bm f), \forall \bm f \in \mathcal T, u \text{ is } C^2 \text{ at each vertex of } \mathcal T\},$$
	$$\mathbf{FN}_{k-1}^v := \{ \bm v \in [C^0(\Omega)]^2 : \bm v|_{\bm f} \in [P_{k-1}(\bm f)]^2, \forall \bm f \in \mathcal T, \bm v \text{ is } C^1 \text{ at each vertex of } \mathcal T\},$$
	and 
	$$\mathbf{FN}_{k-2}^p := \{ p \in L^2(\Omega) : p|_{\bm f} \in P_{k-2}(\bm f), \forall \bm f \in \mathcal T, p \text{ is } C^0 \text{ at each vertex of } \mathcal T\}.$$

For the $H^1$ finite element space $\mathbf{Ar}_{k}$, the degrees of freedom are defined by: for $u \in P_k(\bm f)$, define the degrees of freedom as, for any $\bm f$,
\begin{itemize}
	\item[-] the function value at each vertex $\bm{x}$ of $\bm f$;
	\item[-] at each vertex $\bm x$ of $\bm f$, define $f_{\bm x,1}^0(u) = \frac{\partial u}{\partial x}(\bm x)$, $f_{\bm x,2}^0(u) = \frac{\partial u}{\partial y}(\bm x)$, $f_{\bm x,3}^0(u) = \frac{\partial^2 u}{\partial x^2}(\bm x)$, $f_{\bm x,4}^0(u) = \frac{\partial^2 u}{\partial y^2}(\bm x)$ and $f_{\bm x,5}^0(u) = \frac{\partial^2 u}{\partial x\partial y}(\bm x)$;
	\item[-] on each edge $\bm e$ of $\bm f$, define
		$f_{\bm e,i}^0(u) = (\frac{\partial u}{\partial \bm t},\frac{\partial b_{\bm e}^i}{\partial \bm t} )_{\bm e}$,
		where $b_{\bm e}^i, i = 1,\cdots, k-5$, form a basis of the edge bubble space
		$B^{2}_{\bm e,k-6}:= (\lambda_0\lambda_1)^3P_{k-6}(\bm e)$. Moreover, define
		$f_{\bm e,i}^0(u) = (\frac{\partial u}{\partial \bm n},b_{\bm e}^{\ast,i} )_{\bm e},$
		where $b_{\bm e}^{\ast,i}, i = (k-5)+ 1, \cdots, (k-5)+ (k-4)$, form a basis of the edge bubble space $B_{\bm e,k-5}^1 := (\lambda_0 \lambda_1)^2 P_{k-5}(\bm e)$;
	\item[-] in face $\bm f$, define $f_{\bm f, i}^0(u) = (\operatorname{curl}u,\operatorname{curl}b_{\bm f}^i)_{\bm{f}}$,
		where $b_{\bm f}^i, i =1,2,\cdots, \frac{1}{2}(k-5)(k-4)$, form a basis of $B^{1}_{\bm f, k-6} := (\lambda_0\lambda_1\lambda_2)^2 P_{k-6}(\bm f).$
\end{itemize}

For the $H(\div)$ finite element space $\mathbf{FN}^v_{k-1}$, the degrees of freedom are defined by: for $\bm v \in [P_k(\bm f)]^2$, define the degrees of freedom as, for any $\bm f$,
\begin{itemize}
	\item[-] the moment of the normal component $(\bm v\cdot \bm n, 1)_{\bm e}$ for each edge $\bm e$ of $\bm f$;
	\item[-] at each vertex $\bm x$ of $\bm f$, define $f_{\bm x,1}^1(u) = \frac{\partial v_{x}}{\partial x}(\bm x)$, $f_{\bm x,2}^1(u) = \frac{\partial v_{x}}{\partial y}(\bm x)$, $f_{\bm x,3}^1(u) = \frac{\partial v_{y}}{\partial x}(\bm x)$, $f_{\bm x,4}^1(u) = \frac{\partial v_{y}}{\partial y}(\bm x)$, $f_{\bm x,5}^1(u) = v_{x}(\bm x)$, $f_{\bm x,6}^1(u) = v_{y}(\bm x)$;
	\item[-] on each edge $\bm e$ of $\bm f$, define
		$f_{\bm e, i}^1(\bm v) = (\bm v \cdot \bm n, \frac{\partial b_{\bm e}^i}{\partial \bm t} )_{\bm e},$
		where $\frac{\partial b_{\bm e}^i}{\partial \bm t} , i = 1,2,\cdots, k-5$, form a basis of $B_{\bm e,k-5}^1 /\R$.
		Further, define
		$f_{\bm e, i}^1(\bm v) = (\bm v\cdot \bm t, b_{\bm e}^{*,i}),$ where $b_{\bm e}^{*,i}, i = (k-5) + 1, \cdots, (k-5) + (k-4)$, form a basis of the edge bubble space $B_{\bm e,k-5}^1$;

	\item[-] in face $\bm f$, set
		$f_{\bm f, i}^1(\bm v) = \LL \bm v, b_i \RR_{1,\bm f}$, where $b_i,i =1,2,\cdots, (k-3)(k-2)$, form a basis of $[(\lambda_0\lambda_1\lambda_2)P_{k-4}(\bm f)]^2$, and the inner product is defined as 
		$$\LL \bm v, \bm z \RR_{1, \bm f} := (\mathcal P_{\curl B_{\bm f, k-6}^1} \bm v, \mathcal P_{\curl B_{\bm f, k-6}^1} \bm z)_{\bm f} + (\div \bm v, \div \bm z)_{\bm f}.$$

\end{itemize}

For the $L^2$ finite element space $\mathbf{FN}_{k-2}^p$, the local degrees of freedom are defined by: for $p \in P_k(\bm f)$, define the degrees of freedom as, for any $\bm f$,

\begin{itemize}
	\item[-] the moment $(p, 1)_{\bm f}$ for each face $\bm f$;
	\item[-] at each vertex $\bm x$ of $\bm f$, define $f_{\bm x,1}^2(p) = p(\bm x)$;
	\item[-] inside face $\bm f$, define $f_{\bm f,i}^1(p) = (p, q_i)_{\bm f}$,
		where $q_i,i = 1,2,\cdots,\frac{1}{2}(k-1)k - 4$, form a basis of $P_{k-2}^{(0)}(\bm f)/\mathbb R,$ where $P_{k-2}^{(0)}(\bm f):=\{ q \in P_{k-2}(\bm f): q(\bm x) = q(\bm y) = q(\bm z) = 0\}$.
\end{itemize}

The corresponding edge bubble complex is
$$(\lambda_0\lambda_1)^3P_{k-6}(\bm e) \xrightarrow{\partial/\partial \bm t} (\lambda_0\lambda_1)^2P_{k-5}(\bm e)/\mathbb R,$$
and the face bubble complex is
\begin{equation}
	\label{eq:ar-facebubble} (\lambda_0\lambda_1\lambda_2)^2 P_{k-6}(\bm f) \xrightarrow{\curl}[(\lambda_0\lambda_1\lambda_2)P_{k-4}(\bm f)]^2 \xrightarrow[]{\div} P_{k-2}^{(0)}(\bm f) /\mathbb R.
\end{equation}

\begin{lemma}
	The above polynomial sequence \eqref{eq:ar-facebubble} is an exact complex.
\end{lemma}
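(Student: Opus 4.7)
The plan is to verify exactness at each of the three positions in turn, the crux being an algebraic divisibility argument on the triangle. That the sequence is a complex is immediate: $\div\circ\curl = 0$, and for $\bm v \in [(\lambda_0\lambda_1\lambda_2)P_{k-4}(\bm f)]^2$ each component is divisible by $\lambda_0\lambda_1\lambda_2$, so a direct calculation in barycentric coordinates shows $(\div \bm v)(\bm x) = 0$ at every vertex $\bm x$, while $\int_{\bm f}\div \bm v = \int_{\partial \bm f}\bm v \cdot \bm n = 0$ by the divergence theorem; hence $\div \bm v$ represents an element of $P_{k-2}^{(0)}(\bm f)/\R$. Exactness at the first position is equally immediate: if $u \in (\lambda_0\lambda_1\lambda_2)^2 P_{k-6}(\bm f)$ satisfies $\curl u = 0$, then $u$ is constant and vanishes on $\partial \bm f$, hence $u = 0$.

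The main step is exactness at the middle. Given $\bm v \in [(\lambda_0\lambda_1\lambda_2)P_{k-4}(\bm f)]^2$ with $\div \bm v = 0$, the simply-connectedness of $\bm f$ together with $\bm v$ being polynomial of degree at most $k-1$ provides a stream function $u \in P_k(\bm f)$, unique up to a constant, such that $\curl u = \bm v$. Since $\curl u = (\partial_y u, -\partial_x u)$ and \emph{both} components of $\bm v$ (not merely the normal component) vanish on $\partial \bm f$, we obtain $\nabla u \equiv 0$ on $\partial \bm f$. In particular $u$ is constant on the connected set $\partial \bm f$, and after fixing the additive constant we get $u|_{\partial \bm f} = 0$. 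The key algebraic observation is now that on each edge $\bm e_i = \{\lambda_i = 0\}$ the polynomial $u$ vanishes together with its gradient, so $\lambda_i^2 \mid u$ in $\R[x,y]$; since the three linear forms $\lambda_0,\lambda_1,\lambda_2$ are pairwise coprime for a non-degenerate triangle, we conclude $(\lambda_0\lambda_1\lambda_2)^2 \mid u$, and a degree count forces $u \in (\lambda_0\lambda_1\lambda_2)^2 P_{k-6}(\bm f)$, exhibiting $\bm v$ as an element of $\im(\curl)$.

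Exactness at the last position then follows by a dimension count. Using
$\dim(\lambda_0\lambda_1\lambda_2)^2 P_{k-6}(\bm f) = \tfrac{(k-5)(k-4)}{2}$,
$\dim[(\lambda_0\lambda_1\lambda_2)P_{k-4}(\bm f)]^2 = (k-3)(k-2)$, and
$\dim P_{k-2}^{(0)}(\bm f)/\R = \tfrac{(k-1)k}{2} - 4$, the identity
\begin{equation*}
(k-3)(k-2) - \tfrac{(k-5)(k-4)}{2} = \tfrac{(k-1)k}{2} - 4
\end{equation*}
together with the injectivity of $\curl$ and the exactness at the middle gives $\dim \im(\div) = \tfrac{(k-1)k}{2} - 4 = \dim P_{k-2}^{(0)}(\bm f)/\R$, which, combined with $\im(\div)\subseteq P_{k-2}^{(0)}(\bm f)/\R$ established at the outset, yields the surjectivity.

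The main obstacle—and also the essential use of the extra smoothness encoded in these bubble spaces—is the divisibility step in the middle argument: one must convert the \emph{vector} vanishing $\bm v|_{\partial \bm f} = 0$ (which is strictly stronger than the usual $\bm v \cdot \bm n|_{\partial \bm f} = 0$) into the simultaneous analytic vanishing of $u$ and $\nabla u$ on each edge, and then lift this through pairwise coprimeness of $\{\lambda_0,\lambda_1,\lambda_2\}$ to polynomial divisibility by $(\lambda_0\lambda_1\lambda_2)^2$. Once this algebraic passage is made the remaining steps are formal.
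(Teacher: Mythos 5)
Your proof is correct and follows essentially the same route as the paper's: verify the complex property via the product rule, vanishing at vertices, and the divergence theorem; obtain middle exactness through a polynomial stream function whose vanishing together with its gradient on $\partial \bm f$ forces divisibility by $(\lambda_0\lambda_1\lambda_2)^2$; and conclude surjectivity of $\div$ by the identical dimension count. You merely spell out the coprimality/divisibility step that the paper leaves terse (and omit the one-line check that $\curl$ maps $(\lambda_0\lambda_1\lambda_2)^2P_{k-6}(\bm f)$ into $[(\lambda_0\lambda_1\lambda_2)P_{k-4}(\bm f)]^2$, which the paper calls straightforward), so no substantive difference remains.
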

\begin{proof}
	It is straightforward to see that $\curl ((\lambda_0\lambda_1\lambda_2)^2P_{k-6}(\bm f)) \subseteq [(\lambda_0\lambda_1\lambda_2) P_{k-4}(\bm f)]^2.$ Since for any $\bm v_1 \in [P_{k-4}(\bm f)]^2$, $$\div((\lambda_0\lambda_1\lambda_2)\bm v_1) = \nabla(\lambda_0\lambda_1\lambda_2) \cdot \bm v_1 + (\lambda_0\lambda_1\lambda_2)\div \bm v_1,$$ and both $\nabla(\lambda_0\lambda_1\lambda_2)$ and $(\lambda_0\lambda_1\lambda_2)$ vanish at all the vertices of $\bm f$, it then implies that $\div (\lambda_0\lambda_1\lambda_2)[P_{k-4}(\bm f)]^2 \subset P_{k-2}^{(0)}(\bm f)$. Hence, it follows from $$\int_{\bm f} \div \bm v = \int_{\partial \bm f} \bm v \cdot \bm n = 0, \, \forall \bm v \in [(\lambda_0\lambda_1\lambda_2)P_{k-4}(\bm f)]^2$$ that $\div (\lambda_0\lambda_1\lambda_2)[P_{k-4}(\bm f)]^2 \subset P_{k-2}^{(0)}(\bm f) / \mathbb R.$ As a result, the sequence \eqref{eq:ar-facebubble} is a complex.

	Next, suppose that $\div \bm v = 0$ for some $\bm v \in [(\lambda_0\lambda_1\lambda_2)P_{k-4}(\bm f)]^2$, then there exists $\varphi \in (\lambda_0\lambda_1\lambda_2)^2 P_{k-6}(\bm f)$ such that $\curl \varphi = \bm v.$ It follows from the exactness of the polynomial complex that there exists $\varphi \in P_{k}(\bm f)$ such that $\curl \varphi = \bm v$. Moreover, it can be assumed that $\varphi(\bm x) = 0$ at some vertex $\bm x$ of $\bm f$. Since $\grad \varphi = 0$ on $\partial \bm f$, implying that $\varphi = 0$ on $\partial \bm f$. As a result, it holds that $\varphi \in (\lambda_0\lambda_1\lambda_2)^2 P_{k-6}(\bm f)$.
	
	It suffices to count the dimensions of the bubble spaces therein. From
	$$\dim P_{k-6} + \dim P_{k-2}^{(0)} - 1 = \frac{1}{2}(k-5)(k-4) + \frac{1}{2}(k-1)k - 3 - 1 = (k-2)(k-3) = 2 \dim P_{k-4},$$
	it follows that the complex \eqref{eq:ar-facebubble} is exact.
\end{proof}

\section{Applications in Three Dimensions}
\label{sec:3D}

Before considering applications, introduce further notation. For a face $\bm f$ of the element $\bm K$, let $\bm n$ be its unit normal vector, $\bm t_i, i = 1,2,$ be its two linearly independent unit tangential vectors, such that $\bm t_1 \perp \bm t_2$. For each edge $\bm e$ of the element, let $\bm t$ be its unit tangential vector, $\bm n_i, i = 1,2$, be its two linearly independent unit normal vectors, such that $\bm n_1 \perp \bm n_2$. Define the operator $E_{\bm f} : \mathbb R^3 \to \mathbb R^2$ such that $E_{\bm f}\bm v = [\bm v \cdot \bm t_1, \bm v\cdot \bm t_2]^T$ for $\bm v \in \mathbb R^3$. Let $\nabla_{\bm f}$ be the gradient operator with respect to $\bm t_1$ and $\bm t_2$, such that $\nabla_{\bm f} u = [\bm t_1 \cdot \nabla u, \bm t_2 \cdot \nabla u]^T,$ and $\div_{\bm f}$ be the divergence operator with respect to $\bm t_1$ and $\bm t_2$, such that $\div_{\bm f}\bm v = \frac{\partial v_1}{\partial \bm t_1} + \frac{\partial v_2}{\partial \bm t_2}$ with $\bm v = [v_1, v_2]^T$ defined on $\bm f$. Accordingly, define $\curl_{\bm f} u = [ - \bm t_2 \cdot \nabla u, \bm t_1 \cdot \nabla u]^T$ and $\rot_{\bm f} \bm v = - \frac{\partial v_2}{\partial \bm t_1} + \frac{\partial v_1}{\partial \bm t_2}.$

Next, a family of harmonic bilinear forms is introduced. For the $H^1$ finite element, define the following three bilinear forms (they are inner products if the space under consideration does not contain constants.)

\begin{itemize}
\item[ ] $\LL u,v \RR_{0, \bm K}  = (\grad u, \grad v)_{\bm K},        \forall u,v \in H^1(\bm K) ;$
\item[] $\LL u,v \RR_{0, \bm f}  = (\grad_{\bm f} u, \grad_{\bm f} v)_{\bm f},                 \forall u,v \in H^1(\bm f) ;$   
\item[] $\LL u,v \RR_{0, \bm e}  = (\frac{\partial u}{\partial \bm t}, \frac{\partial v}{\partial \bm t})_{\bm e}, \forall u,v \in H^1(\bm e),  $          
\end{itemize}
where $\bm t$ is the unit tangential vector of $\bm e$. 
The inner products $\LL \bm \xi, \bm \eta \RR_{1, \bm f}$, $\LL \bm \xi, \bm \eta\RR_{1, \bm K}$ and $\LL \bm \xi, \bm \eta\RR_{2, \bm K}$ will be defined specifically in each finite element complex discussed in this section, with respect to their corresponding bubble function spaces.

In three dimensions, there are four types of the standard finite element de Rham complexes by the N\'edel\'ec element, RT element, and BDM element, see \cite{2018Arnold}. However, they are not discussed in this paper, and the bounded projection operators can be constructed similarly to that in the previous section. Instead, only those finite element complexes with additional regularity are discussed. The exactness of the finite element complexes discussed below is shown in \cite{2018ChristiansenHuHu,2015Neilan}, and therefore Assumption (B1) holds.

For convenience, both ends of the complex are omitted. For the standard finite element complexes, $\mathbb R \to $ and $\to 0$ are omitted, and for the bubble complexes, $0 \to $ and $\to 0$ are omitted.

\subsection{The 3D Hermite--Stenberg Complex}

Consider the Hermite--Stenberg complex in three dimensions from \cite{2018ChristiansenHuHu}. Note that both the structure itself and the argument are quite similar to those in two dimensions, as there are no complicated local regularity requirements. However, a significant difference in three dimensions is the need to introduce more harmonic inner products, including two for the $H(\curl)$ finite element space on the faces and in the elements, respectively, and one for the $H(\div)$ finite element space in the elements. To help the readers get used into the argument in this section, we choose this example and present the verification in details. Although the Hermite--Stenberg complex may seem straightforward (and perhaps somewhat routine), it showcases the general principles in the construction of the degrees of freedom in three dimensions. For $k \ge 4$, consider the following finite element complexes,
\begin{equation}
	\tag{{\sf HS3}}
	\mathbf {Hm}_{k} \xrightarrow{\grad} \mathbf {St}^{\curl}_{k-1} \xrightarrow{\curl} \mathbf{BDM}_{k-2} \xrightarrow{\div} \mathbf P_{k-3}^{-},
\end{equation}
	where 	$$\mathbf{Hm}_{k} := \{ u \in C^0(\Omega) ~:~ u|_{\bm K} \in P_k(\bm K), \forall \bm K \in \mathcal T, u \text{ is } C^1 \text{ at each vertex of } \mathcal T\},$$
	$$\mathbf{St}_{k-1}^{\curl} := \{ \bm \xi \in H(\curl, \Omega; \mathbb R^3) ~:~ \bm \xi |_{\bm K} \in [P_{k-1}(\bm K)]^3, \forall \bm K \in \mathcal T, \bm \xi \text{ is } C^0 \text{ at each vertex of } \mathcal T\},$$
	$$\mathbf{BDM}_{k-2} :=  \{ \bm v \in H(\div, \Omega; \mathbb R^3) ~:~ \bm v |_{\bm K} \in [P_{k-2}(\bm K)]^3, \forall \bm K \in \mathcal T\},$$
	and 
	$$\mathbf P_{k-3}^- := \{ p \in L^2(\Omega) ~:~  p|_{\bm K} \in P_{k-3}(\bm K),\, \forall \bm K \in \mathcal T\}.$$
	For simplicity, we abuse the notation of the spaces which has been defined in two dimensions.

The (three-dimensional) Hermite element space $\mathbf{Hm}_k$ is taken as the $H^1$ finite element space. The corresponding degrees of freedom are defined by: for $u \in P_k(\bm K)$, define the degrees of freedom as follows, for any $\bm K$,
\begin{itemize}
	\item[-] the function value $u(\bm x)$ at each vertex $\bm x$ of $\bm K$;
	\item[-] at each vertex $\bm x$ of $\bm K$, set $f_{\bm x,i}^0(u)$ be the first order derivatives of $u$ at $\bm x$;
	\item[-] on each edge $\bm e$ of $\bm K$, set $f_{\bm e, i}^0(u) = \LL u , b_{hm, \bm e}^i \RR_{0,\bm e}$, where $b_{hm, \bm e}^i$, form a basis of the edge bubble space $(\lambda_0\lambda_1)^2P_{k-4}(\bm e)$;
	\item[-] on each face $\bm f$ of $\bm K$, set $f_{\bm f,i}^0(u) = \LL u, b_{hm, \bm f}^i \RR_{0,\bm f}$, where $b_{hm, \bm f}^i$, form a basis of the face bubble space $B_{\bm f, k-3} =  (\lambda_0\lambda_1\lambda_2)P_{k-3}(\bm f)$;
	\item[-] inside $\bm K$, set $f_{\bm K,i}^0(u) = \LL u,  b_{hm, \bm K}^i \RR_{0, \bm K}$, where $b_{hm, \bm K}^i$, form a basis of the element bubble space $B_{\bm K, k-4} = (\lambda_0\lambda_1\lambda_2\lambda_3)P_{k-4}(\bm K).$
\end{itemize}

To define the $H(\curl)$ finite element space, the following inner products are introduced. 
Define 
$$\LL \bm \xi,\bm \eta \RR_{1, \bm f} = (\proj_{\grad_{\bm f} B_{\bm f, k - 3} }\bm \xi, \proj_{\grad_{\bm f} B_{\bm f, k - 3}}\bm \eta)_{\bm f} + (\rot_{\bm f}  \bm \xi, \rot_{\bm f} \bm w)_{\bm f},\quad\forall \bm \xi, \bm \eta \in [P_{k-1}(\bm f)]^2, $$
and 
$$\LL \bm \xi, \bm \eta \RR_{1, \bm K} = (\proj_{ \grad B_{\bm K, k-4}} \bm \xi, \proj_{\grad B_{\bm K, k-4}} \bm \eta)_{\bm K} + (\curl \bm \xi, \curl \bm \eta)_{\bm K}, \quad\forall \bm \xi, \bm \eta \in [P_{k-1}(\bm K)]^3.$$

The $H(\curl)$ finite element space $\mathbf{St}^{\curl}_{k-1}$ is defined as follows: for $\bm \xi = [\xi_x, \xi_y, \xi_z]^T \in [P_{k-1}(\bm K)]^3$, define the following degrees of freedom, for any $\bm K$,
\begin{itemize}
	\item[-] the moment of the tangential component $(\bm \xi \cdot \bm t, 1)_{\bm e}$ for each edge $\bm e$ of $\bm K$;
	\item[-] at each vertex $\bm x$ of $\bm K$, define $f_{\bm x,1}^1(\bm \xi) = \xi_{x}, f_{\bm x,2}^1(\bm \xi) = \xi_y$ and $f_{\bm x,3}^1(\bm \xi) = \xi_z$;
	\item[-] for each edge $\bm e$ of $\bm K$, set
		$f_{\bm e,i}^1(\bm \xi) = (\bm \xi \cdot \bm t, b_{st,\bm e}^{\curl,i})_{\bm e},$ where $b_{st,\bm e}^{\curl,i}$, form a basis of $ (\lambda_0\lambda_1)P_{k-3}(\bm e)/\mathbb R$;
	\item[-] for each face $\bm f$ of $\bm K$, set
		$f_{\bm f, i}^1(\bm \xi) = \LL E_{\bm f}\bm \xi, b_{st, \bm f}^{\curl, i} \RR_{1,\bm f}$, where $b_{st, \bm f}^{\curl, i}$, form a basis of $[B_{\bm f,k-1}^{BDM}]^{\perp}$, the rotated BDM bubble function space on face $\bm f$; \footnote{While in the original statement, it uses a moment with the test function in some Raviart--Thomas space. However, this is just equivalent degrees of freedom for the BDM bubble, see \cite{2013BoffiBrezziFortin}.}

	\item[-] inside $\bm K$, set
		$f_{\bm K, i}^1(\bm \xi) =  \LL \bm \xi, b_{st,\bm K}^{\curl,i}\RR_{1,\bm K},$
		where $  b_{st,\bm K}^{\curl,i}$, form a basis of $B^N_{\bm K, k-1}$, where $B^N_{\bm K, k-1}$ is the bubble space of the N\'edel\'ec bubble space $$\{ \bm w \in [P_{k-1}(\bm K)]^3 ~:~ (E_{\bm f} u) = 0 \text{ on } \partial \bm K\}.$$

\end{itemize}

To define the $H(\div)$ finite element space, the following inner product is introduced. 
For $\bm v, \bm z \in [P_{k-2}(\bm K)]^3$, define 
$$ \LL \bm v, \bm z\RR_{2, \bm K} = (\mathcal P_{\curl B_{\bm K, k-1}^N} \bm v, \mathcal P_{\curl B_{\bm K, k-1}^N} \bm z)_{\bm K} + (\div \bm v, \div \bm z)_{\bm K}.$$ 

The $H(\div)$ element is just the BDM element, which is defined by: for $\bm v \in [P_{k-2}(\bm K)]^3$, define the degrees of freedom as follows, for any $\bm K$,
\begin{itemize}
	\item[-] the moment of the normal component, $(\bm v \cdot \bm n, 1)_{\bm f}$ for each face $\bm f$ of $\bm K$;
	\item[-] for each face $\bm f$ of $\bm K$, set $f_{\bm f, i}^2(\bm \xi) = (\bm v \cdot \bm n, b_{bdm,\bm f}^i)_{\bm f},$
		where $ b_{bdm,\bm f}^i$, form a basis of $P_{k-2}(\bm f)/\mathbb R$;
	\item[-] inside $\bm K$, set
		$f_{\bm K, i}^2(\bm v) = \LL \bm v, b_{bdm,\bm K}^i \RR_{2,\bm K}$, where $ b_{bdm,\bm K}^i$, form a basis of \begin{equation}\label{eq:bdm-bubble} B_{\bm K,k-2}^{BDM} := \{ \bm \eta \in [P_{k-2}(\bm K)]^3: (\bm \eta \cdot \bm n) = 0 \text{ on } \partial \bm K\}.\end{equation}
\end{itemize}

The degrees of freedom of the $L^2$ finite element space $\mathbf{P}_{k-3}^-$ are defined by : for $p \in P_{k-3}(\bm K)$, define the degrees of freedom as follows, for any $\bm K$,
\begin{itemize}
	\item[-] the moment of $p$ in each element $\bm K$, namely, $(p,1)_{\bm K}$;
	\item[-] let $f_{\bm K,i}^3(p) = (p,q_i)_{\bm f}$ where $q_i$, form a basis of $P_{k-3}(\bm K) / \R$.
\end{itemize}

The key to verifying Assumption (B2) is the exactness of the associated bubble complexes. Consider the following edge bubble complex for any edge $\bm e$,
\begin{equation}
	\label{eq:hs3-edgebubble}
	(\lambda_0\lambda_1)^2P_{k-4}(\bm e) \xrightarrow{\partial/\partial \bm t} (\lambda_0\lambda_1)P_{k-3}(\bm e)/ \mathbb R,
\end{equation}
the following face bubble complex for any face $\bm f$,
\begin{equation}
	\label{eq:hs3-facebubble}
	(\lambda_0\lambda_1\lambda_2)P_{k-3}(\bm f)\xrightarrow{\grad_{\bm f}} [B^{BDM}_{\bm f, k -1}]^{\perp} \xrightarrow{\rot_{\bm f}} P_{k-2}(\bm f)/\mathbb R,
\end{equation}
or the rotated version\footnote{In what follows, only this version will be provided.}
\begin{equation}
	\label{eq:hs3-facebubble-perp}
	(\lambda_0\lambda_1\lambda_2)P_{k-3}(\bm f)\xrightarrow{\curl_{\bm f}} B^{BDM}_{\bm f, k -1} \xrightarrow{\div_{\bm f}} P_{k-2}(\bm f)/\mathbb R,
\end{equation}
and the following element bubble complex for any $\bm K$,
\begin{equation}
	\label{eq:hs3-elembubble}
	(\lambda_0\lambda_1\lambda_2\lambda_3)P_{k-4}(\bm K)\xrightarrow{\grad} B_{\bm K,k-1}^{N} \xrightarrow{\curl} B_{\bm K,k-2}^{BDM} \xrightarrow{\div} P_{k-3}(\bm K)/\mathbb R.
\end{equation}
Note that the face bubble complex and the edge bubble complex have appeared in the two-dimensional HS complex.

Here we present how to check Assumptions (B1)-(B4).
It is easy to see that $f_{\sigma,i}^{0}(c) = 0 $ for any real number $c$. To check Assumption (B2), it suffices to use the harmonic inner products, and the exactness of the bubble complexes, as mentioned before. 

First, check that $f_{\sigma,i}^1(\nabla \varphi_{\bm x}) = 0$ for all $f_{\sigma,i}^1(\cdot)$. Denote by $\bm \xi = \nabla \varphi_{\bm x}$. 
\begin{itemize}
	
	\item[-] It follows from the definition of the degrees of freedom that  $f_{\bm x, i}^1(\nabla \varphi_{\bm x}) = 0$.
\item[-] For $f_{\bm e,i}^1(\bm \xi) = (\bm \xi\cdot \bm t, b_1)_{\bm e} $ with $b_1\in (\lambda_0\lambda_1)P_{k-3}(\bm e)/\mathbb R$, by the exactness of \eqref{eq:hs3-edgebubble}, there exists $b \in (\lambda_0\lambda_1)^2 P_{k-4}(\bm e)$ such that $\frac{\partial b}{\partial \bm t} = b_1$. As a result, it follows from the definition of $\varphi_{\bm x}$ that 
 $$f_{\bm e,i}^1(\bm \xi) = (\bm \xi \cdot \bm t, b_1)_{\bm e} = (\frac{\partial \varphi_{\bm x}}{\partial \bm t}, \frac{\partial }{\partial \bm t} b)_{\bm e} = \LL \varphi_{\bm x}, b\RR_{0,\bm e} = 0.$$
\item[-] For $f_{\bm f,i}^1(\bm \xi) = \LL E_{\bm f}\bm \xi, b_2 \RR_{1,\bm f}$ with $b_2 \in [B_{\bm f, k-1}^{BDM}]^{\perp}$, it follows from the exactness of \eqref{eq:hs3-facebubble} that there exists $b' \in (\lambda_0\lambda_1\lambda_2)P_{k-3}(\bm f)$ such that $\nabla_{\bm f} b' = \mathcal P_{\grad_{\bm f} B_{\bm f,k-3}} b_2$. It follows from the definition of the basis function $\varphi_{\bm x}$ that $E_{\bm f} \nabla \varphi_{\bm x} = \nabla_{\bm f} \varphi_{\bm x}$  that 
\[\begin{split}
	f_{\bm f, i}^1(\bm \xi) & = \LL  E_{\bm f} \bm\xi, b_2\RR_{1,\bm f} \\ & = (\mathcal P_{\grad_{\bm f} B_{\bm f,k-3}} E_{\bm f}\bm \xi,  \mathcal P_{\grad_{\bm f} B_{\bm f,k-3}} b_2)_{\bm f} + (\rot_{\bm f} \nabla_{f}\varphi_{\bm x}, \rot_{\bm f} b_2 )_{\bm f} \\
	& = (\nabla_{\bm f} \varphi_{\bm x},\nabla_{\bm f} b')_{\bm f}  = \LL \varphi_{\bm x}, b' \RR_{0,\bm f} = 0.
\end{split}\]
\item[-] For $f_{\bm K,i}^1(\bm \xi) = \LL \xi, b_3 \RR_{1, \bm K}$ with $b_3 \in B_{\bm K,k-1}^N$, it follows from the exactness of \eqref{eq:hs3-elembubble} that there exists $b^{''} \in (\lambda_0\lambda_1\lambda_2\lambda_3)P_{k-4}(\bm K)$ such that $\nabla b^{''} = \mathcal P_{\grad B_{\bm K,k-4}} b_3$. Again, by the definition of $\varphi_{\bm x}$,
\[\begin{split}f_{\bm K, i}^1(\bm \xi) & = \LL \bm\xi, b_2\RR_{1,\bm K} \\ & = (\mathcal P_{{\grad B_{\bm K,k-4}}} \bm \xi,  \mathcal P_{\grad B_{\bm K,k-4}} \nabla b'')_{\bm K} + (\curl \bm \xi, \curl b_3 )_{\bm K} \\
	& = (\nabla  \varphi_{\bm x},\nabla  b'')_{\bm K} 
 = \LL \varphi_{\bm x}, b'' \RR_{0,\bm K} = 0.
\end{split}\] 
\end{itemize}

Next, check that $f_{\sigma,i}^2(\curl \varphi_{\bm e}) = 0$ for all $f_{\sigma,i}^2(\cdot)$. Denote by $\bm v = \curl \varphi_{\bm e}$.
\begin{itemize}
\item[-] For $f_{\bm f,i }^2(\bm v) = (\bm v \cdot \bm n, b_1)_{\bm f}$ with $b_1 \in P_{k-2}(\bm f)/\mathbb R$, it follows from the exactness of \eqref{eq:hs3-facebubble} that there exists $b \in [B^{BDM}_{\bm f, k -1}]^{\perp}$ such that $b \perp  \grad_{\bm f} B_{\bm f, k-3}$ and $\rot_{\bm f} b = b_1.$ It then follows from the definition of $\LL\cdot, \cdot \RR_{1, \bm f}$ and the definition of the basis function $\varphi_{\bm e}$ that 
\[ f_{\bm f,i }^2(\bm v) = (\bm v \cdot \bm n, b_1)_{\bm f} = (\rot_{\bm f} E_{\bm f} \varphi_{\bm e}, \rot_{\bm f} b)_{\bm f} = \LL E_{\bm f} \varphi_{\bm e}, b\RR_{1,\bm f} = 0\]
\item[-] For $f_{\bm K,i}^2(\bm v) = \LL \bm v, b_2 \RR_{2,\bm K}$ with $b_2 \in B_{\bm K,k-2}^{BDM}$, it follows from the exactness of \eqref{eq:hs3-elembubble} that there exists $b' \in B_{\bm K, k-1}^N$ such that 
$b' \perp \grad  B_{\bm K,k-4}$ and 
$ \curl b' = P_{\curl B_{\bm K,k-1}^N} b_2.$ Again, by the definition of $\varphi_{\bm e}$, it holds that, 
\[\begin{split}f_{\bm K,i}^2(\bm v) & = \LL \bm v, b_2 \RR_{2,\bm K} \\ & = (\mathcal P_{ \curl B_{\bm K,k-1}^N} \curl \varphi_{\bm e}, \mathcal P_{\curl B_{\bm K,k-1}^N} b_2)_{\bm K} + (\div \curl \varphi_{\bm e}, \div b_2)_{\bm K}\\
&= (\curl \varphi_{\bm e}, \curl b')_{\bm K}= \LL \varphi_{\bm e}, b' \RR _{\bm 1, \bm K} = 0.
\end{split}\]
\end{itemize}

A similar argument shows that $f_{\bm K, i}^3(\div \varphi_{\bm f}) = 0$ for all $f_{\bm K,i}^3(\cdot)$. 

\subsection{The 3D Argyris Complex}

The argument of the 3D Argyris complex is similar to the two-dimensional case, with the degrees of freedom of each finite element space separated into two parts. However, unlike the 2D case, the 3D Argyris complex does not admit a higher global regularity. This property reduces the procedure when checking Assumption (B2).

Recall the Argyris complex from \cite{2018ChristiansenHuHu}, for $k\ge 5$,
\begin{equation}
	\tag{{\sf A3}}
	\mathbf {Ar}_{k} \xrightarrow{\grad} \mathbf {A}^{\curl}_{k-1} \xrightarrow{\curl}  \mathbf{St}_{k-2}^{\div} \xrightarrow{\div}  \mathbf P_{k-3}^{-},
\end{equation}
where
	\[\begin{split} \mathbf{Ar}_{k} := \{ u \in C^0(\Omega) ~:~ & u|_{\bm K} \in P_k(\bm K), \forall \bm K \in \mathcal T, \\ & u \text{ is } C^2 \text{ at each vertex of } \mathcal T, \text{ is } C^1 \text{ on each edge of } \mathcal T\}, \end{split}\]
	\[\begin{split} \mathbf {A}^{\curl}_{k-1} := \{ \bm \xi \in H(\curl, \Omega; \mathbb R^3) ~:~ &\bm \xi|_{\bm K} \in [P_{k-1}(\bm K)]^3, \forall \bm K \in \mathcal T, \\ &\bm \xi \text{ is } C^1 \text{ at each vertex of } \mathcal T, \text{ is } C^0 \text{ on each edge of } \mathcal T\},\end{split}\]
	and
	$$ \mathbf{St}_{k-2}^{\div} := \{ \bm v \in H(\div,\Omega;\mathbb R^3) : \bm v |_{\bm K} \in  [P_{k-2}(\bm K)]^3, \forall \bm K\in \mathcal T, \bm v \text{ is } C^0 \text{ at each vertex of } \mathcal T\}. $$

The $H^1$ finite element space is the Argyris element space $\mathbf{Ar}_k$, and the degrees of freedom are defined by: for $u \in P_k(\bm K)$, define the following degrees of freedom as, for any $\bm K$,
\begin{itemize}
	\item[-] the function value $u(\bm x)$ at each vertex $\bm x$ of $\bm K$;
	\item[-] at each vertex $\bm x$ of $\bm K$, define $f_{\bm x,i}^0(u)$ as the first and second order derivatives of $u$ at $\bm x$;
	\item[-] on each edge $\bm e$ of $\bm K$, set $f_{\bm e, i}^0(u) = \LL u , b_{ar,\bm e}^{i} \RR_{0,\bm e}$, where $b_{ar,\bm e}^{i}$, form a basis of the edge bubble space $(\lambda_0\lambda_1)^3P_{k-6}(\bm e)$,
		and set
		$f_{\bm e,i}^0(u) = (\frac{\partial u}{\partial \bm n_{j}}, b_{ar,\bm e}^{\ast, i})_{e}$ with $\bm n_{j}$ two linearly independent normal vectors with respect to the edge $\bm e$, where $b_{ar,\bm e}^{\ast, i}$, form a basis of the edge bubble space $(\lambda_0\lambda_1)^2P_{k-5}(\bm e)$;

	\item[-] on each face $\bm f$ of $\bm K$, set $\LL u, b_{ar, \bm f}^i \RR_{0, \bm f}$, where $b_{ar, \bm f}^i$, form a basis of the face bubble space $B_{\bm f, k-6}^1 =  (\lambda_0\lambda_1\lambda_2)^2P_{k-6}(\bm f)$;
	\item[-] inside $\bm K$, set $\LL u, b_{ar, \bm K}^i \RR_{0, \bm K}$, where $b_{ar, \bm K}^i$, form a basis of the element bubble space $B_{\bm K, k-4} = (\lambda_0\lambda_1\lambda_2\lambda_3)P_{k-4}(\bm K).$
\end{itemize}

To define the $H(\curl)$ finite element space, the following inner products are introduced:  
$$\LL \bm \xi,\bm \eta \RR_{1, \bm f} = (\proj_{\grad_{\bm f} B^1_{\bm f, k - 6} }\bm \xi, \proj_{\grad_{\bm f} B^1_{\bm f, k - 6}}\bm \eta)_{\bm f} + (\rot_{\bm f}  \bm \xi, \rot_{\bm f} \bm \eta)_{\bm f}, \quad\forall \bm \xi, \bm \eta \in [P_{k-1}(\bm f)]^2,$$
and 
$$\LL \bm \xi, \bm \eta \RR_{1, \bm K} = (\proj_{ \grad B_{\bm K, k-4}} \bm \xi, \proj_{\grad B_{\bm K, k-4}} \bm \eta)_{\bm K} + (\curl \bm \xi, \curl \bm \eta)_{\bm K},\quad\forall \bm \xi, \bm \eta \in [P_{k-1}(\bm K)]^3.$$

The $H(\curl)$ finite element space $\mathbf {A}^{\curl}_{k-1}$ is defined as follows: for $\bm \xi \in [P_{k-1}(\bm K)]^3$, the degrees of freedom are defined as, for any $\bm K$,
\begin{itemize}
	\item[-] the moment of the tangential component $(\bm \xi \cdot \bm t, 1)_{\bm e}$ for each edge $\bm e$ of $\bm K$;
	\item[-] at each vertex $\bm x$ of $\bm K$, define $f_{\bm x, i}^1(\bm \xi)$ as the function value and the first order derivatives of $\bm \xi$.
	\item[-] on each edge $\bm e$ of $\bm K$, set $f_{\bm e, i}^1(\bm \xi) = ( \bm \xi \cdot \bm t, b_{a,\bm e}^{\curl,i})_{\bm e}$, where $b_{a, \bm e}^{\curl,i}$, form a basis of the edge bubble space $(\lambda_0\lambda_1)^2P_{k-5}(\bm e) /\mathbb R$,
		and set $f_{\bm e, i}^1(\bm \xi) = (\bm \xi\cdot \bm n_{j}, b_{a,\bm e}^{\ast, \curl, i})_{\bm e}$, where $b_{a,\bm e}^{\ast, \curl, i}$, form a basis of the edge bubble space $(\lambda_0\lambda_1)^2P_{k-5}(\bm e),$ and $j = 1, 2$;
	\item[-] on each face $\bm f$ of $\bm K$, set $f_{\bm f,i}^1(\bm \xi) = \LL \bm E_{\bm f}\bm \xi, b_{a,\bm f}^{\curl, i} \RR_{1,\bm f}$, where $b_{a,\bm f}^{\curl, i}$, form a basis of the face bubble space $(\lambda_0\lambda_1\lambda_2)[P_{k-4}(\bm f)]^2$;
	\item[-] inside $\bm K$, set $f_{\bm K,i}^1(\bm \xi) = \LL \bm \xi, b_{a,\bm K}^{\curl, i} \RR_{1,\bm K}$, where $b_{a, \bm K}^{\curl, i}$, form a basis of $B^N_{\bm K, k-1}$, with $B^N_{\bm K, k-1}$ the N\'edel\'ec bubble space 
	\begin{equation}\label{eq:nedelecbubble}B^N_{\bm K, k-1} := \{ \bm \eta \in [P_{k-1}(\bm K)]^3 ~:~ (E_{\bm f} \bm \eta) = 0 \text{ on } \partial \bm K\}.\end{equation}
\end{itemize}

To define the $H(\div)$ finite element space, the following inner product is introduced:
for $\bm v, \bm z \in [P_{k-2}(\bm K)]^3$, define 
$$ \LL \bm v, \bm z\RR_{2, \bm K} = (\mathcal P_{\curl B_{\bm K, k-1}^N} \bm v, \mathcal P_{\curl B_{\bm K, k-1}^N} \bm z)_{\bm K} + (\div \bm v, \div \bm z)_{\bm K}.$$ 

The $H(\div)$ finite element space $\mathbf{St}_{k-2}^{\div} $ is defined as follows: for $\bm v \in [P_{k-2}(\bm f)]^3$, define the following degrees of freedom, for any $\bm K$,
\begin{itemize}
	\item[-] the moment of the normal component, $(\bm v\cdot \bm n, 1)_{\bm f}$ for each face of $\bm K$;
	\item[-] at each vertex $\bm x$ of $\bm K$, let $f_{\bm x, i}^2(\bm v)$ be the function value of $\bm v$ at each vertex $\bm x$ of $\bm K$;
	\item[-] on each face $\bm f$ of $\bm K$, set $f_{\bm f, i}^2(\bm v) = (\bm v\cdot \bm n, b_{st,\bm f}^{\div, i})_{\bm f}$, where $b_{st,\bm f}^{\div,i}$, form a basis of the space $P_{k-2}^{(0)}(\bm f) /\mathbb R$, recall that $P_{k-2}^{(0)}(\bm f) = \{u\in P_{k-2}(\bm f):u\text{ vanishes at vertices of }\bm f\}$;
	\item[-] inside $\bm K$, set $f_{\bm K, i}^2(\bm v) = \LL \bm v, b_{st, \bm K}^{\div,i} \RR_{2,\bm K}$, where $b_{st, \bm K}^{\div, i}$, form a basis of the element bubble space $ B_{\bm K,k-2}^{BDM}$.
\end{itemize}

The corresponding edge bubble complex is
$$(\lambda_0\lambda_1)^3P_{k-6}(\bm e) \xrightarrow{\partial/\partial \bm t}(\lambda_0\lambda_1)^2P_{k-5}(\bm e) / \mathbb R,$$
the face bubble complex is
$$ (\lambda_0\lambda_1\lambda_2)^2 P_{k-6}(\bm f) \xrightarrow{\curl_{\bm f}} [(\lambda_0\lambda_1\lambda_2)P_{k-4}(\bm f)]^2 \xrightarrow{\div_{\bm f}} P_{k-2}^{(0)}(\bm f) /\mathbb R,$$
and the cell bubble complex is
$$
	(\lambda_0\lambda_1\lambda_2\lambda_3)P_{k-4}(\bm K) \xrightarrow{\grad} B^N_{\bm K, k-1} \xrightarrow{\curl}B_{\bm K,k-2}^{BDM}\xrightarrow{\div} P_{k-3}(\bm f)/\mathbb R.
$$
Note that the face bubble complex and the edge bubble complex have appeared in the two-dimensional Argyris--Falk--Neilan complex.

To check Assumption (B2), for those degrees of freedom defined by the harmonic inner products, the argument is similar to that of the previous sections. It suffices to check those degrees of freedom which are {\bf not} defined by the harmonic inner product, which will be finished by an algebraic calculation argument: To prove that $(\grad \varphi_{\bm x} \cdot \bm n_{j}, b_{a, \bm e}^{\ast, \curl i})_{e} = 0$, it suffices to observe that $(\grad \varphi_{\bm x} \cdot \bm n_{j}, b_{a, \bm e}^{\ast, \curl i})_{e} = (\frac{\partial \varphi_{x}}{\partial \bm n_{j}}, b_{ar, \bm e}^{\ast, i})_{e}$, while the two bubble functions $b_{a,\bm e}^{\ast,\curl,i}$ and $b_{ar,\bm e}^{\ast, i}$ come from the same edge bubble space $(\lambda_0\lambda_1)^2P_{k-5}(\bm e)$.

\subsection{The 3D Neilan Complex}
\label{sec:neilan3d}

At last, we consider the Neilan complex in three dimensions~\cite{2015Neilan}, which possesses higher global regularity. This is the most technical part in this paper, since the local regularity of the $H^1(\curl)$ finite element space (i.e., $\mathbf{N}_{k-1}^{\curl}$, introduced below) are more complicated than those discussed above, in the sense that some degrees of freedom are more complex, cf. \cite{2015Neilan}. Furthermore, verifying Assumption (B2) for this finite element complex is more challenging. Besides using the exactness of the bubble complex (via the corresponding harmonic inner products) and the algebraic calculations individually, the subsequent verification also relies on a combination of both techniques. This approach appears to be helpful when considering the finite element complexes with even higher global regularity.
Recall the 3D Neilan complex for $k \ge 9$,
\begin{equation}
	\tag{$\mathsf{N3}$}
	\mathbf{Z}_k \xrightarrow{\grad} \mathbf{N}^{\curl}_{k-1} \xrightarrow{\curl} \mathbf{N}^{\div}_{k-2} \xrightarrow{\div} \mathbf{N}^0_{k-3},
\end{equation}
which is a finite element subcomplex of the following continuous Stokes complex in three dimensions:
\begin{equation}
	H^2(\Omega) \xrightarrow{\grad} H^1(\curl, \Omega; \mathbb R^3) \xrightarrow{\curl} H^1(\Omega;\mathbb R^3) \xrightarrow{\div} L^2(\Omega).
\end{equation}
Here, 
\[\begin{split}\mathbf Z_k := \{ u \in C^1(\Omega) ~:~ & u|_{\bm K} \in P_k(\bm K), \forall \bm K \in \mathcal T,\\ & u \text{ is } C^4 \text{ at each vertex of } \mathcal T, \text{ is } C^2 \text{ on each edge of } \mathcal T\},\end{split}\]

\[ \begin{split} \mathbf{N}^{\curl}_{k-1}   := \{ \bm \xi \in [C^0(\Omega)]^3 ~:~ & \bm \xi |_{\bm K} \in [P_{k-1}(\bm K)]^3, \forall \bm K \in \mathcal T, \curl \bm \xi \in [C^0(\Omega)]^3, \\ & \bm \xi \text{ is } C^3 \text{ at each vertex of } \mathcal T, \text{ is } C^1 \text{ on each edge of } \mathcal T, \\ 
	& \operatorname{curl}\bm \xi  \text{ is } C^1 \text{ on each edge of } \mathcal T
	\},
\end{split}\]

\[\begin{split}\mathbf{N}^{\div}_{k-2}   := \{ \bm v \in [C^0(\Omega)]^3 ~:~& \bm v |_{\bm K} \in [P_{k-2}(\bm K)]^3, \forall \bm K \in \mathcal T, \\ & \bm v \text{ is } C^2 \text{ at each vertex of } \mathcal T, \text{ is } C^1 \text{ on each edge of } \mathcal T\},\end{split}\]
and 
\[\begin{split}
	\mathbf{N}^0_{k-3} :=\{ p \in L^2(\Omega) ~:~ & p|_{\bm K} \in P_{k-3}(\bm K), \forall \bm K \in \mathcal T,\\ &p \text{ is } C^1 \text{ at each vertex of } \mathcal T, \text{ is } C^0 \text{ on each edge of } \mathcal T\}.\end{split}\]

The $H^1$ finite element space $\mathbf Z_k$ is defined as follows: for $u \in P_k(\bm K)$, define the following degrees of freedom, for any $\bm K$,
\begin{itemize}
	\item[-] the function value $u(\bm x)$ at each vertex $\bm x$ of $\bm K$;
	\item[-] at each vertex $\bm x$ of $\bm K$, set $f_{\bm x,i}^0(u)$ as the partial derivatives $D^{\alpha}u(\bm x)$ with $1 \le |\alpha| \le 4$;
	\item[-] on each edge $\bm e$ of $\bm K$,
		\begin{itemize}
			\item[$\diamond$] set $f_{\bm e, i }^0(u)$ as $\LL u, b_{z, \bm e}^{i} \RR_{0, \bm e}$, where $b_{z,\bm e}^{i}$, form a basis of $(\lambda_0\lambda_1)^5P_{k-10}(\bm e)$;
			\item[$\diamond$]  set $f_{\bm e, i }^0(u)$ as $\LL \frac{\partial u}{\partial \bm n_{j}}, b_{z, \bm e}^{\ast, i}\RR_{0,\bm e}$, where $b_{z, \bm e}^{\ast, i}$, form a basis of $(\lambda_0\lambda_1)^4P_{k-9}(\bm e)$, $j = 1,2$;
			\item[$\diamond$]  set $f_{\bm e, i}^0(u)$ as $(\frac{\partial^2 u}{\partial \bm n_{j} \partial \bm n_{j'}}, b_{z, \bm e}^{\ast\ast, i})_{\bm e}$, where $b_{z, \bm e}^{\ast\ast,i}$, form a basis of $(\lambda_0\lambda_1)^3P_{k-8}(\bm e)$, $j, j' = 1,2$;
		\end{itemize}
	\item[-] on each face $\bm f$ of $\bm K$, set $f_{\bm f, i}^0(u) = \LL u, b_{z,\bm f}^{i} \RR_{0,\bm f}$, where $b_{z, \bm f}^i$, form a basis of $B_{\bm f, k -9}^2 =  (\lambda_0\lambda_1\lambda_2)^3P_{k-9}(\bm f)$; set $f_{\bm f, i}^0(u) = (\frac{\partial u}{\partial \bm n}, b_{z, \bm f}^{\ast, i})_{\bm f}$, where $b_{z, \bm f}^{\ast, i}$, form a basis of $(\lambda_0\lambda_1\lambda_2)^2P_{k-7}(\bm f)$;
	\item[-] inside $\bm K$, set $f_{\bm f,i}^0(u) = \LL u, b_{z,\bm K}^i \RR_{0,\bm K}$, where $b_{z, \bm K}^i$, form a basis of $B_{\bm K, k - 8}^1 =  (\lambda_0 \lambda_1 \lambda_2 \lambda_3)^2P_{k-8}(\bm K)$.
\end{itemize}

To define the $H(\curl)$ finite element space, the following inner products are introduced:
$$\LL \bm \xi,\bm \eta \RR_{1, \bm f} = (\proj_{\grad_{\bm f} B_{\bm f, k -9}^2  }\bm \xi, \proj_{\grad_{\bm f} B_{\bm f, k -9}^2 }\bm \eta)_{\bm f} + (\rot_{\bm f}  \bm \xi, \rot_{\bm f} \bm \eta)_{\bm f}, \quad\forall \bm \xi, \bm \eta \in [P_{k-1}(\bm f)]^2, $$
and 
$$\LL \bm \xi, \bm \eta \RR_{1, \bm K} = (\proj_{ \grad B_{\bm K, k - 8}^1} \bm \xi, \proj_{\grad B_{\bm K, k - 8}^1} \bm \eta)_{\bm K} + (\curl \bm \xi, \curl \bm \eta)_{\bm K}, \quad\forall \bm \xi, \bm \eta \in [P_{k-1}(\bm K)]^3.$$

The $H(\curl)$ finite element $\mathbf{N}^{\curl}_{k-1}$ is defined as follows. For $\bm \xi \in [P_{k-1}(\bm K)]^3$, define the following degrees of freedom for any $\bm K$,
\begin{itemize}
	\item[-] the moment of the tangential component $(\bm \xi \cdot \bm t, 1)_{\bm e}$ for each edge $\bm e$ of $\bm K$;
	\item[-] at each vertex $\bm x$ of $\bm K$, set $f_{\bm x, i}^{1}(\bm \xi)$ as the zeroth, first, second, and third order derivatives of $\bm \xi$ at vertex $\bm x$;
	\item[-] on each edge $\bm e$ of $\bm K$,
		\begin{itemize}
			\item[(1.a)] set $f_{\bm e, i}^1(\bm \xi) = (\bm \xi \cdot \bm t, b_{n,\bm e}^{\curl, i})_{\bm e}$, where $b_{n, \bm e}^{\curl, i}$, form a basis of $(\lambda_0\lambda_1)^4P_{k-9}(\bm e) / \R$;

			\item[(1.b)] set $f_{\bm e, i}^1(\bm \xi) = \LL\bm \xi\cdot \bm n_{j},  b_{n,\bm e}^{\ast, \curl, i}\RR_{0,\bm e}$, where $b_{n, \bm e}^{\ast, \curl, i}$, form a basis of $(\lambda_0\lambda_1)^4P_{k-9}(\bm e)$, $j = 1,2$;
			\item[(1.c)] set $f_{\bm e, i}^1(\bm \xi) = (\frac{\partial \bm \xi \cdot \bm t}{\partial \bm n_{j}},  b_{n, \bm e}^{\ast 2, \curl, i} )_{\bm e}$, where $b_{n, \bm e}^{\ast 2, \curl, i}$, form a basis of $ (\lambda_0\lambda_1)^3P_{k-8}(\bm e)$, $j = 1,2$;
			\item[(1.d)] set $f_{\bm e, i}^1(\bm \xi) = (\frac{\partial \bm \xi \cdot \bm n_{j'}}{\partial \bm n_{j}}, b_{n, \bm e}^{\ast 2, \curl, i} )_{\bm e}$, where $b_{n, \bm e}^{\ast 2, \curl, i}$, form a basis of $ (\lambda_0\lambda_1)^3P_{k-8}(\bm e)$, $j, j'=  1,2$;
			\item[(1.e)] set $f_{\bm e, i}^1(\bm \xi)  = (\frac{\partial \curl \bm \xi}{\partial \bm n_1}, b_{n, \bm e}^{\ast 3, \curl, i})_{\bm e}$, where $b_{n, \bm e}^{\ast 3, \curl ,i}$, form a basis of $(\lambda_0\lambda_1)^2 [P_{k-7}(\bm e)]^3$;
			\item[(1.f)] set $f_{\bm e, i}^1(\bm \xi)  = ((I - \bm n_2\otimes \bm n_2)\frac{\partial \curl \bm \xi}{\partial \bm n_2}, b_{n, \bm e}^{\ast 3, \curl, i})_{\bm e}$, where $b_{n, \bm e}^{\ast 3, \curl ,i}$, form a basis of $(\lambda_0\lambda_1)^2 [P_{k-7}(\bm e)]^3$;
		\end{itemize}
	\item[-] on each face $\bm f$ of $\bm K$, 
		\begin{itemize}
			\item[(2.a)] set $f_{\bm f, i}^1(\bm \xi) = (\bm \xi\cdot \bm n, b_{n,\bm f}^{\curl, i})_{\bm f}$, where $b_{n, \bm f}^{\curl, i}$, form a basis of $(\lambda_0\lambda_1\lambda_2)^2P_{k-7}(\bm f)$;
			\item[(2.b)] set $f_{\bm f, i}^1(\bm \xi) = \LL E_{\bm f} \bm \xi, b_{n, \bm f}^{\ast, \curl, i}\RR_{1,\bm f}$, where $b_{n, \bm f}^{\ast, \curl, i}$, form a basis of $(\lambda_0\lambda_1\lambda_2)^2  [B^{BDM}_{\bm f, k-8}]^{\perp}$;

			\item[(2.c)] set $f_{\bm f, i}^1(\bm \xi) = (E_{\bm f}\curl \bm \xi, b_{n, \bm f}^{\ast\ast, \curl, i})_{\bm f}$, where $b_{n, \bm f}^{\ast\ast, \curl, i}$, form a basis of $(\lambda_0\lambda_1\lambda_2)^2 P_{k-8}(\bm f)$;
		\end{itemize}
	\item[-] inside $\bm K$, set $f_{\bm K,i}^1(\bm \xi) = \LL \bm \xi, b_{n, \bm K}^{\curl, i} \RR_{1, \bm K}$, where $b_{n, \bm K}^{\curl, i}$, form a basis of $B_{n, \bm K}^{\curl} = \{ \bm \eta \in [P_{k-1}(\bm K)]^3 : \bm \eta = \curl \bm \eta = 0 \text{ on } \partial \bm K\}$.
\end{itemize}

The unisolvency of the above degrees of freedom with respect to $[P_{k-1}(\bm K)]^3$ is shown in the appendix.

To define the $H(\div)$ finite element space, the following inner product is introduced. 
For $\bm v, \bm z \in [P_{k-2}(\bm K)]^3$, define 
$$ \LL \bm v, \bm z\RR_{2, \bm K} = (\mathcal P_{\curl B_{n,\bm K}^{\curl}} \bm v, \mathcal P_{\curl B_{n,\bm K}^{\curl}} \bm z)_{\bm K} + (\div \bm v, \div \bm z)_{\bm K}.$$

The $H(\div)$ finite element space $\mathbf{N}^{\div}_{k-2}$ is defined as follows: for $\bm v \in [P_{k-2}(\bm K)]^3$, define the following degrees of freedom, for any $\bm K$,
\begin{itemize}
	\item[-] the moment of the normal component $(\bm v\cdot \bm n,1)_{\bm f}$ for each face $\bm f$ of $\bm K$;
	\item[-] at each vertex $\bm x$ of $\bm K$, let $f_{\bm x, i}^2(\bm v)$ be the function value, first order and second order derivatives of $\bm v$ at $\bm x$;
	\item[-] on each edge $\bm e$ of $\bm K$, set $f_{\bm e, i}^2(\bm v) = ( \bm v ,b_{n,\bm e}^{\div, i})_{\bm e}$, where $b_{n, \bm e}^{\div, i}$, form a basis of $B_{n, \bm e}^{\div} = (\lambda_0\lambda_1)^3[P_{k-8}(\bm e)]^3$; 
		set $f_{\bm e, i}^2(\bm v) = (\frac{\partial \bm v}{\partial \bm n_j}, b_{n, \bm e}^{\ast, \div, i})_{\bm e}$, where $b_{n, \bm e}^{\ast, \div, i}$, form a basis of $B_{n, \bm e}^{\ast, \div} = (\lambda_0\lambda_1)^2[P_{k-7}(\bm e)]^3$;
	\item[-] on each face $\bm f$ of $\bm K$, set $f_{\bm f,i}^2(\bm v) = (\bm v\cdot \bm n, b_{n,\bm f}^{\ast, \div, i})_{\bm f}$, where $b_{n,\bm f}^{\ast, \div, i}$, form a basis of $(\lambda_0\lambda_1\lambda_2)^2P_{k-8}(\bm f)/\mathbb R$; set $f_{\bm f, i}^2(\bm v) = (E_{\bm f}\bm v, b_{n, \bm f}^{\div, i})_{\bm f}$, where $b_{n, \bm f}^{\div,i}$, form a basis of $(\lambda_0\lambda_1\lambda_2)^2P_{k-8}(\bm f)$;
	\item[-] inside $\bm K$, set $f_{\bm K, i}^2(\bm v) = \LL \bm v,  b_{n, \bm K}^{\div, i}\RR_{2,\bm K}$, where $b_{n, \bm K}^{\div, i}$, form a basis of $[(\lambda_0\lambda_1\lambda_2\lambda_3) P_{k-6}(\bm K)]^3.$
\end{itemize}

The $L^2$ finite element space $\mathbf{N}^0_{k-3}$ is defined as follows: for $p \in P_{k-3}(\bm K)$, define the following degrees of freedom, for any $\bm K$,
\begin{itemize}
	\item[-] the moment, $(p,1)_{\bm K}$;
	\item[-] at each vertex $\bm x$ of $\bm K$, set $f_{\bm x, i}^3(p)$ as the function value and first order derivative of $p$ at $\bm x$;
	\item[-] on each edge $\bm e$ of $\bm K$, set $f_{\bm e,i}^3(p) = (p, b_{n,\bm e}^{0,i})_{\bm e}$, where $b_{n, \bm e}^{0,i}$, form a basis of $(\lambda_0\lambda_1)^2P_{k-7}(\bm e)$;
	\item[-] inside the element $\bm K$, set $f_{\bm K, i}^3(p) = (p, b_{n, \bm K}^{0,i})_{\bm K}$, where $b_{n,\bm K}^{0,i}$, form a basis of $B_{n, \bm K}/\mathbb R$ with $$B_{n,\bm K} = \{ q \in P_{k-3}(\bm K): q|_{\bm e} = 0 \text{ on each } \bm e \text{ on } \bm K\}.$$
\end{itemize}

The corresponding edge bubble complex
is
$$(\lambda_0\lambda_1)^5 P_{k-10}(\bm e) \xrightarrow{\partial/\partial \bm t} (\lambda_0\lambda_1)^4P_{k-9}(\bm e) /\mathbb R,$$
the face bubble complex is
\begin{equation}
	\label{eq:neilan-facebubble}(\lambda_0\lambda_1\lambda_2)^3P_{k-9}(\bm f) \xrightarrow{\curl_{\bm f}} (\lambda_0\lambda_1\lambda_2)^2 B^{BDM}_{\bm f, k-7}  \xrightarrow{\div_{\bm f}} (\lambda_0\lambda_1\lambda_2)^2P_{k-8}(\bm f)/\mathbb R,
\end{equation}
and the element bubble complex is
\begin{equation}
	\label{eq:neilan-cellbubble}
	(\lambda_0 \lambda_1 \lambda_2 \lambda_3)^2P_{k-8}(\bm K)\xrightarrow{\grad} B_{n, \bm K}^{\curl} \xrightarrow{\curl} [(\lambda_0\lambda_1\lambda_2\lambda_3) P_{k-6}(\bm K)]^3 \xrightarrow{\div} B_{n,K}/\mathbb{R}.
\end{equation}

The cell bubble complex \eqref{eq:neilan-cellbubble} is much more complicated, whose exactness has been shown in \cite{2015Neilan}. Here only the exactness of the face bubble complex will be proved.

\begin{lemma}
	The polynomial sequence
	\eqref{eq:neilan-facebubble} is an exact complex.
\end{lemma}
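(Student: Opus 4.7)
The plan mirrors the analogous exactness lemma for the AFN face bubble complex: verify that the two arrows send their domains into the stated subspaces and vanish in composition, then use the exactness of the standard polynomial $2$D de Rham complex on $\bm f$ together with a boundary-vanishing analysis to lift any divergence-free element to a potential in the leftmost space, and finally close the argument with a dimension count.

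First I would check the containment of the two arrows. For $\varphi=(\lambda_0\lambda_1\lambda_2)^3\psi$ with $\psi\in P_{k-9}(\bm f)$, the product rule yields
\[
\curl_{\bm f}\varphi=(\lambda_0\lambda_1\lambda_2)^2\bigl[3\psi\,\curl_{\bm f}(\lambda_0\lambda_1\lambda_2)+(\lambda_0\lambda_1\lambda_2)\curl_{\bm f}\psi\bigr].
\]
On the edge $\bm e_i$ (where $\lambda_i=0$) the bracket reduces to $3\psi\lambda_j\lambda_k\,\curl_{\bm f}\lambda_i$, whose normal component vanishes because $\curl_{\bm f}\lambda_i$ is the $90^{\circ}$ rotation of the edge-normal vector $\nabla_{\bm f}\lambda_i$ and hence tangent to $\bm e_i$; thus the bracket lies in $B^{BDM}_{\bm f,k-7}$. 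For $\bm v=(\lambda_0\lambda_1\lambda_2)^2\bm w$ with $\bm w\in B^{BDM}_{\bm f,k-7}$, the product rule gives
\[
\div_{\bm f}\bm v=2(\lambda_0\lambda_1\lambda_2)\,\nabla_{\bm f}(\lambda_0\lambda_1\lambda_2)\cdot\bm w+(\lambda_0\lambda_1\lambda_2)^2\div_{\bm f}\bm w,
\]
and on $\bm e_i$ the inner product $\nabla_{\bm f}(\lambda_0\lambda_1\lambda_2)\cdot\bm w$ collapses to a scalar multiple of $\bm w\cdot\bm n=0$, so it is divisible by $\lambda_i$ for every $i$, hence by $\lambda_0\lambda_1\lambda_2$; the whole divergence therefore factors as $(\lambda_0\lambda_1\lambda_2)^2$ times an element of $P_{k-8}(\bm f)$. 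The divergence theorem $\int_{\bm f}\div_{\bm f}\bm v=\int_{\partial\bm f}\bm v\cdot\bm n=0$ supplies the mean-zero condition and places the image in $(\lambda_0\lambda_1\lambda_2)^2 P_{k-8}(\bm f)/\mathbb R$. The identity $\div_{\bm f}\circ\curl_{\bm f}=0$ is automatic.

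The main step is exactness at the middle term. Given $\bm v\in(\lambda_0\lambda_1\lambda_2)^2 B^{BDM}_{\bm f,k-7}\subset[P_{k-1}(\bm f)]^2$ with $\div_{\bm f}\bm v=0$, the exactness of the polynomial $2$D de Rham complex on $\bm f$ yields some $\varphi\in P_k(\bm f)$, unique up to a constant, with $\curl_{\bm f}\varphi=\bm v$. Since the factor $(\lambda_0\lambda_1\lambda_2)^2$ vanishes to second order on $\partial\bm f$, both $\bm v$ and $\nabla\bm v$ vanish there, and therefore $\nabla\varphi$ and $\nabla^2\varphi$ vanish on every edge. Normalising the additive constant so that $\varphi$ vanishes at one vertex, the vanishing of $\partial_{\bm t}\varphi$ along each edge together with the connectedness of $\partial\bm f$ force $\varphi\equiv 0$ on $\partial\bm f$; combined with the vanishing of $\nabla\varphi$ and $\nabla^2\varphi$, this yields order-$3$ vanishing of $\varphi$ on each edge, so $\lambda_i^3\mid\varphi$ for $i=0,1,2$, and hence $\varphi\in(\lambda_0\lambda_1\lambda_2)^3 P_{k-9}(\bm f)$.

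To finish, the dimensions
\[
\dim(\lambda_0\lambda_1\lambda_2)^3 P_{k-9}(\bm f)=\frac{(k-7)(k-8)}{2},\qquad \dim(\lambda_0\lambda_1\lambda_2)^2 B^{BDM}_{\bm f,k-7}=(k-6)(k-8),
\]
(using $\dim B^{BDM}_{\bm f,m}=m^2-1$) together with $\dim(\lambda_0\lambda_1\lambda_2)^2 P_{k-8}(\bm f)/\mathbb R=\frac{(k-6)(k-7)}{2}-1$ satisfy the rank--nullity identity $(k-7)^2-1=(k-6)(k-8)$, which together with the exactness at the middle established above gives surjectivity of $\div_{\bm f}$ on the right and completes the proof. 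I expect the main obstacle to be the boundary-regularity argument, specifically promoting the order-$2$ vanishing of $\bm v$ to the order-$3$ vanishing of $\varphi$ while carefully normalising the additive-constant ambiguity so that $\varphi$ itself vanishes along the whole boundary.
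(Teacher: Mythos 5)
Your proof is correct and takes essentially the same route as the paper's: check the complex property via the product rule together with edge-vanishing of the normal components, lift a divergence-free field to a potential in $(\lambda_0\lambda_1\lambda_2)^3P_{k-9}(\bm f)$, and conclude with the identical dimension count $\dim P_{k-9}(\bm f)+\dim P_{k-8}(\bm f)-1=\dim B^{BDM}_{\bm f,k-7}=(k-6)(k-8)$. The only difference is that you spell out the middle-exactness lifting (normalizing $\varphi$ and using the vanishing of $\varphi$, $\nabla\varphi$, $\nabla^2\varphi$ on each edge to get $\lambda_i^3\mid\varphi$), a step the paper dispatches with ``similarly, it can be shown.''
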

\begin{proof}
	Let $\lambda_0,\lambda_1,\lambda_2$ be the three barycenter coordinates of $\bm f$, and set $b = \lambda_0\lambda_1\lambda_2$. Given $u = b^3u_1$ for some $u_1 \in P_{k-9}(\bm f)$, it follows that
	$$ \curl_{\bm f} u = b^2(3u_1 \curl_{\bm f} b + b \curl_{\bm f} u_1).$$
	Since $\curl_{\bm f} b \cdot \bm n |_{\bm e} = \frac{\partial b}{\partial \bm t} |_{\bm e} = 0$, on each edge $\bm e$ of $\bm f$ with the unit normal vector $\bm n$ and the corresponding tangential vector $\bm t$, it then follows that $\curl_{\bm f} u \in b^2 B^{BDM}_{\bm f, k-7}.$
	Similarly, given $\bm \xi = b^2 \bm \xi_1$ for some $\bm \xi_1 \in B^{BDM}_{\bm f, k-7}$, then it holds that 
	$$\div (b^2 \bm \xi_1) = 2b \grad b\cdot \bm \xi_1 + b^2 \div \bm \xi_1.$$
	It then suffices to check that $\grad b\cdot \bm \xi_1$ vanishes on all edges. Since
	$\grad b \cdot \bm \xi_1 = \frac{\partial b}{\partial \bm t} (\bm \xi_1 \cdot \bm t) + \frac{\partial b}{\partial \bm n}(\bm \xi_1 \cdot \bm n) = 0$ on edge $\bm e$ of $\bm f$, it holds that $\div \bm \xi \in b^2P_{k-8}(\bm f)$. Further, it follows from $\int_{\bm f}\div\bm \xi = \int_{\bm f}\bm \xi \cdot \bm n = 0$ that $\div \bm \xi \in b^2P_{k-8}(\bm f) /\mathbb R.$ This proves that the sequence in \eqref{eq:neilan-facebubble} is a complex.

Similarly, it can be shown that if $\bm \xi \in b^2 B^{BDM}_{\bm f, k-7}$ satisfies $\div \bm \xi = 0$, then there exists $\varphi \in  b^3 P_{k-9}(\bm f)$ such that $\curl \varphi = \bm \xi.$

To show the exactness, it then suffices to count the dimensions. It follows from \cite{2013BoffiBrezziFortin} that
	$$\dim B^{BDM}_{\bm f, k - 7} = (k-6)(k-8),$$
	and that 
	$$\dim P_{k-9} + \dim P_{k-8} - 1 = \frac{1}{2}(k-8)(k-7) + \frac{1}{2}(k-7)(k-6) - 1 = (k-6)(k-8) = \dim B^{BDM}_{\bm f, k - 7},$$
	which imply the exactness of \eqref{eq:neilan-facebubble}.

\end{proof}

To check Assumption (B2), it suffices to check those degrees of freedom which are {\bf not} defined by the harmonic inner product.

Similar to that in the previous subsection, this is reduced to the algebraic calculation. For $\nabla \varphi_{\bm x}$, it includes the following cases:

\begin{itemize}
\item[-] (1.b): $f_{\bm e, i}^1(\nabla \varphi_{\bm x}) =  \LL \frac{\partial}{\partial \bm n_j}  \varphi_{\bm x},  b_{n,\bm e}^{\ast, \curl, i}\RR_{0,\bm e} = 0,$ for $ b_{n,\bm e}^{\ast, \curl, i} \in (\lambda_0\lambda_1)^4 P_{k-9}(\bm e)$;
\item[-] (1.d): $f_{\bm e, i}^1(\nabla \varphi_{\bm x}) = (\frac{\partial^2}{\partial \bm n_{j} \partial \bm n_{j'}}\varphi_{\bm x}, b_{n, \bm e}^{\ast 2, \curl, i} )_{\bm e} = 0$, for $b_{n, \bm e}^{\ast 2, \curl, i} \in (\lambda_0\lambda_1)^3[P_{k-8}(\bm e)]^3$;
\item[-] (1.e) and (1.f): they vanish directly from $\curl \nabla \varphi_{\bm x} = 0$;  
\item[-] (2.a): $f_{\bm e, i}^1(\nabla \varphi_{\bm x}) = (\frac{\partial}{ \partial \bm n} \varphi_{\bm x}, b_{n, \bm f}^{\curl, i})_{\bm f} = 0$ for $b_{n, \bm f}^{\curl, i}\in (\lambda_0\lambda_1\lambda_2)^2P_{k-7}(\bm f)$;
\item[-] (2.c): they vanish directly from $\curl \nabla \varphi_{\bm x} = 0$. 

\end{itemize}

Note that there exist cases that cannot be reduced to algebraic calculations, which are listed below:

For (1.c), an integration by parts yields that 
$(\frac{\partial^2}{\partial \bm t \partial \bm n_j} \varphi_{\bm x}, 1)_{e} = 0$.
Consider $f_{\bm e, i}^1(\nabla \varphi_{\bm x}) = (\frac{\partial^2}{\partial \bm t \partial \bm n_j} \varphi_{\bm x}, \bm b_{n, \bm e}^{\ast 2, \curl, i} )_{\bm e}$, for $\bm b_{n, \bm e}^{\ast 2, \curl, i} \in (\lambda_0\lambda_1)^3[P_{k-8}(\bm e)] / \mathbb R$. Since $\frac{\partial}{\partial \bm t}: (\lambda_0\lambda_1)^4P_{k-9}(\bm e) \to (\lambda_0\lambda_1)^3[P_{k-8}(\bm e)] / \mathbb R$ is a surjection, there exists $b_1$ such that $\frac{\partial}{\partial \bm t}b_1 = \bm b_{n, \bm e}^{\ast 2, \curl, i}$. As a result,  
$ f_{\bm e, i}^1(\nabla \varphi_{\bm x})= \LL \frac{\partial}{\partial \bm n_{j}}\varphi_{x}, b_1\RR_{0,\bm e} = 0 .$ 

For $\curl \varphi_{\bm e}$, it includes the following cases:

\begin{itemize}
\item[-] For the first part of $f_{\bm e,i}^2(\cdot)$, note that 
$$\curl \bm \xi = (\frac{\partial}{\partial \bm n_2}(\bm \xi \cdot \bm n_1) - \frac{\partial}{\partial \bm n_1}(\bm \xi \cdot \bm n_2))\bm t + (\frac{\partial}{\partial \bm t}(\bm \xi \cdot \bm n_2) - \frac{\partial}{\partial \bm n_2}(\bm \xi \cdot \bm t))\bm n_1 + (\frac{\partial}{\partial \bm t}(\bm \xi \cdot \bm n_1) - \frac{\partial}{\partial \bm n_1}(\bm \xi \cdot \bm t))\bm n_2.
$$
It then follows from (1.b), (1.c) and (1.d) and 
$\frac{\partial}{\partial \bm t}(\bm \varphi_{\bm e} \cdot \bm n_j)|_{\bm e} = 0$ that $f_{\bm e,i}^2(\curl \varphi_{\bm e}) = 0$.
\item[-] For the second part of $f_{\bm e,i}^2(\cdot)$, note that
$$\bm n_2 \cdot \frac{\partial(\curl \bm \xi)}{\partial \bm n_2} =  - \bm t \cdot \frac{\partial(\curl \bm \xi)}{\partial \bm t}  - \bm n_1 \cdot \frac{\partial(\curl \bm \xi)}{\partial \bm n_1}.$$
It follows from the above equation, (1.e) and (1.f), that it remains to check 
$$(\frac{\partial}{\partial \bm t}(\curl\varphi_{\bm e} \cdot \bm t), b)_{\bm e} = 0,\quad\forall b \in (\lambda_0\lambda_1)^2P_{k-7}(\bm e).$$
The first part of $f_{\bm e,i}^2(\cdot)$ and the fact $\partial/\partial \bm t : (\lambda_0\lambda_1)^3P_{k-8}(\bm e) \to (\lambda_0\lambda_1)^2P_{k-7}(\bm e) / \mathbb R$ imply that it remains to check
$$(\frac{\partial}{\partial \bm t}(\curl\varphi_{\bm e} \cdot \bm t), 1)_{\bm e} = 0,$$ which comes from the definition of $\varphi_{\bm e}.$

\end{itemize}

Finally, it remains to check that the degrees of freedom $f_{\bm e,i}^3(\div\varphi_{\bm f}) = 0$. Note that this is implied by the second part of $f_{\bm e,i}^2(\varphi_{\bm f}) = 0$ and $\varphi_{\bm f}|_{\bm e} =0$.

\section{Concluding Remarks}
In this paper, we construct local bounded commuting interpolation operators from the continuous de Rham complex to the (nonstandard) finite element de Rham complex. We develop the theory for general meshes in two and three dimensions, and provide various examples. Similar frameworks can be established in higher dimensions,though rare finite element subcomplexes with additional regularity can be found in the literature. Moreover, the applications discussed in this paper can be extended to finite element subcomplexes on macro-elements.
\bibliographystyle{plain}
\bibliography{reference}
\newpage
\appendix
\renewcommand{\theequation}{{\Alph{section}.\arabic{equation}}}

The appendix contains two sections. \Cref{sec:proof-2d} proves \Cref{thm:main-2d}, and \Cref{sec:neilanh1curl} shows the technical details in \Cref{sec:neilan3d}.

\section{Proof of \Cref{thm:main-2d}}
\label{sec:proof-2d}

This section constructs $\pi^i, i = 0,1,2,$ in two dimensions, and the local estimates are similar to that in three dimensions, see \Cref{sec:proof}.
Given any patch $\omega$, the auxiliary operators can be summarized in the following diagram, which is a concrete example of \eqref{eq:cd:splitproj-long}:
\begin{equation*}
	\xymatrix{
	& H^1(\omega)\ar@<-1mm>@{.>}[dl]|-{\cR^0_{\omega}} \ar[r]^-{\curl} \ar@<-1mm>[d]^-{\mathcal Q^0_{\omega}} & H(\div, \omega; \mathbb R^2) \ar@<-1mm>@{.>}[dl]|-{\cR^1_{\omega}} \ar[r]^-{\div} \ar@<-1mm>[d]^-{\mathcal Q^1_{\omega}} &L^2(\omega) \ar@<-1mm>@{.>}[dl]|-{\mathcal R^2_{\omega}}  & \\
	\mathbb R \ar[r] & \Lambda_h^0(\omega) \ar[r]^-{\curl} & \Lambda_h^1(\omega) \ar[r]^-{\div}  &\Lambda_h^2(\omega) \ar[r] & 0.
	}
\end{equation*}

\subsection{Construction of $\pi^0$}

\begin{equation}
	\label{eq:pi0harmonic}
	\xymatrix{
	&  & H^1(\omega) \ar@<-1mm>@{.>}[dl]|-{\cR^0_{\omega}} \ar[r]^-{\curl} \ar@<-1mm>[d]^-{\mathcal Q^0_{\omega}} &H(\div,\omega;\mathbb R^2) \ar@<-1mm>@{.>}[dl]|-{\mathcal R^1_{\omega}}  & \\
	0 \ar[r] & \mathbb R \ar[r]^-{\subseteq} & \Lambda_h^0(\omega) \ar[r]^-{\curl}  &\Lambda_h^1(\omega) \ar[r] & 0.
	}
\end{equation}

To construct the projection operator $\pi^0$, for a scalar function $u \in H^1(\omega)$,
for any local patch $\omega$, consider the following local projection
\begin{equation}
	\label{eq:cQ0u-2d}
\cQ_{\omega}^0 u = \cM_{\omega} u + \cR_{\omega}^1 \curl u.
\end{equation}
Here $\cM_{\omega} u = \cR^0_{\omega} u : = \fint_{\omega} u \in \mathbb R$ is the (local) integral mean of $u$ over $\omega$, and $\cR_{\omega}^1 \xi \perp \mathbb R$ is defined for all $\bm\xi \in H(\div,\omega; \mathbb R^2)$ such that
\begin{equation}
	\label{eq:R1-2d}
	(\operatorname{curl}\cR_{\omega}^1 \bm \xi,\operatorname{curl}v)_{\omega} = (\bm \xi, \curl v),\quad \forall v\in \Lambda_{h}^0(\omega).
\end{equation}
A direct consequence is $\Hproj_{\omega}^0 u = u$ for $u \in \Lambda_h^0(\omega)$. 

Then, given $u \in H^1(\Omega)$, the projection $\pi^0 u \in \Lambda_h^0$ is defined as
\begin{equation}\label{eq:defn:pi0-2d} \pi^0 u = \sum_{\sigma} L_{\sigma}^0\cQ_{\sigma}^0u + \sum_{\bm x} (\cQ _{\bm x}^0 u)(\bm x) \varphi_{\bm x}.\end{equation}

For $u \in \Lambda_h^0$, it holds that $(\Hproj^0_{\sigma} u)|_{\omega_{\sigma}} = u|_{\omega_{\sigma}}$. Therefore,  by \eqref{eq:dofid-u}, it holds that
$$\pi^0 u = \sum_{\sigma}L^0_{\sigma} u + \sum_{x} u(\bm x)\varphi_{\bm x} = u.$$ This is to say $\pi^0$ is a projection operator.
It is worth mentioning that the value of $\pi^0 u$ on $\omega_{\sigma}$ is determined by the value of $u$ on $\omega^{[1]}_{\sigma}$.

\subsection{Construction of $\pi^1$}

By Assumption (A3) that $L_{\sigma}^0$ vanishes for any constant in $\mathbb R$, it now follows from \eqref{eq:cQ0u-2d} and the definition of $\mathcal M^0 u$ that $L_{\sigma}^0 \mathcal Q_{\sigma}^0 u = L_{\sigma}^0 \cR_{\sigma}^1 \curl u.$ Taking $\curl$ in \eqref{eq:defn:pi0-2d} yields that,
\begin{equation}
	\label{eq:curlpi0u-2d}
	\begin{split}
		\curl \pi^0 u = & \sum_{\sigma}\curl  L_{\sigma}^0{\cQ}_{\sigma}^0u + \sum_{\bm x} \cQ_{\bm x}^0 u (\bm x)\curl \varphi_{\bm x} \\
		= &  \sum_{\sigma}\curl  L_{\sigma}^0\cR_{\bm x}^1 \curl u + \sum_{\bm x}  (\cR_{\sigma}^1 \curl u)(\bm x)\curl \varphi_{\bm x} + \operatorname{curl}\avg^0 u,
	\end{split}
\end{equation}
where $\avg^0u  = \sum_{x} \avg_{\bm x} u \varphi(\bm x)$ is defined in \Cref{prop:dblcmplx}. It follows from \Cref{prop:dblcmplx} that
$\curl \avg^0 u= \avg^1 \curl u$ for all $u \in H^1(\Omega)$, where $\avg^1$ is also defined in \Cref{prop:dblcmplx}.

This motivates to define an operator $\widehat{\pi}^1: L^2(\Omega;\R^2)\rightarrow \Lambda_{h}^1$ by
\begin{equation}
	\label{eq:hatpi1-2d}
	\widehat{\pi}^1 \bm v = \sum_{\sigma}  \curl L_{\sigma}^0 \mathcal R_{\sigma}^1 \bm v + \sum_{\bm x} (\mathcal R_{\bm x}^1 \bm v)(\bm x)\curl \varphi_{\bm x} + \avg^1 \bm v.
\end{equation}
Note that the value of $\widehat{\pi}^1 u $ on $\omega_{\sigma}$ is determined by the value of $u$ on $\omega_{\sigma}^{[1]}.$

Then, it follows from \eqref{eq:curlpi0u-2d} and \eqref{eq:hatpi1-2d} that $ \widehat{\pi}^1 \curl u = \curl \pi^0 u$ for any $u\in H^1(\omega)$. In particular, for any $u\in H^1(\omega_{\sigma}^{[1]})$, it holds that $ \widehat{\pi}^1 \curl u|_{\omega_{\sigma}} = \curl \pi^0 u|_{\omega_{\sigma}}$.

To modify $\widehat{\pi}^1$ to construct $\pi^1$, consider the following diagram
\begin{equation*}
	\xymatrix{
	&  & H(\div, \omega; \mathbb R^2) \ar@<-1mm>@{.>}[dl]|-{\cR^1_{\omega}} \ar[r]^-{\div} \ar@<-1mm>[d]^-{\mathcal Q^1_{\omega}} &L^2(\omega) \ar@<-1mm>@{.>}[dl]^-{\mathcal R^2_{\omega}}  & \\
	\mathbb R \ar[r] & \Lambda_h^0(\omega) \ar[r]^-{\curl} & \Lambda_h^1(\omega) \ar[r]^-{\div}  &\Lambda_h^2(\omega) \ar[r] & 0.
	}
\end{equation*}
	Here $\Lambda_h^2(\omega)$ is the restriction of $\Lambda_h^2$ on the patch $\omega$. Given $\bm v \in H(\div,\Omega;\mathbb R^2)$,
define the interpolation $\cQ^1_{\omega} \bm v := \curl \bm \cR_{\omega}^1 \bm v+ \cR_{\omega}^2 \div \bm v$, where $\cR_{\omega}^2 : L^2(\Omega) \to \Lambda_h^1(\omega)$ is defined for $p \in L^2(\Omega)$ such that $\cR^2_{\omega} p \perp \curl \Lambda_h^0(\omega)$ and
\begin{equation}\label{eq:R2-2d}(\div \cR_{\omega}^2 p, \div \bm w)_{\omega} = (p, \div \bm w)_{\omega}.\end{equation}

Thus, define the following modified interpolation of $\hat{\pi}^1 \bm v$, for any $\bm v \in H(\div,\Omega;\mathbb R^2)$,
\begin{equation}
	\label{eq:pi1-2d}
	\pi^1 \bm v = \widehat{\pi}^1 \bm v + \sum_{\sigma} L _{\sigma}^1(I - \widehat{\pi}^1) \cQ_{\sigma,[1]}^1  \bm v + \sum_{\bm e} ( (I - \widehat{\pi}^1) \cQ_{\bm e,[1]}^1 \bm v,\bm n)_{\bm e}\varphi_{\bm e},
\end{equation}
where $\cQ^1_{\sigma, [\ell]}$ is the harmonic interpolation operator $\cQ^1$ with respect to the patch $\omega_{\sigma}^{[\ell]}$. Note that the value $\widehat{\pi}^1\bm v$ on $\omega_{\sigma}$ only depends on the value of $\bm v$ on $\omega_{\sigma}^{[1]}.$ Therefore, $\pi^1$ defined in \eqref{eq:pi1-2d} is well-defined. We first show that $\pi^1$ is a projection operator. For any $\bm v \in \Lambda_h^1$, $(\cQ_{\sigma,[1]} \bm v )|_{\omega_{\sigma}^{[1]}}= \bm v|_{\omega_{\sigma}^{[1]}}$ according to the definition of the harmonic projection, and hence
$$ \pi^1 \bm v = \widehat \pi^1 \bm v + \sum_{\sigma} L _{\sigma}^1(I - \widehat{\pi}^1) \bm v + \sum_{\bm e} ( (I - \widehat{\pi}^1)\bm v,\bm n)_{\bm e}\varphi_{\bm e} = \widehat \pi^1 \bm v+ (I - \widehat \pi^1) \bm v = \bm v.$$

Substituting the expression of $\cQ^1_{\omega} \bm v$ into \eqref{eq:pi1-2d} yields that
\begin{equation*}
	\begin{split}
		(I - \widehat \pi^1) \cQ_{\sigma, [1]} \bm v  = &  (I - \widehat \pi^1 ) \curl \cR^1_{\sigma, [1]} \bm v + (I - \widehat \pi^1) \cR_{\sigma, [1]}^2\div \bm v \\
		= & \curl (I - \pi^0)\cR^1_{\sigma, [1]} \bm v + (I - \widehat \pi^1) \cR_{\sigma, [1]}^2\div \bm v \\
		= & (I - \widehat \pi^1) \cR_{\sigma, [1]}^2\div \bm v.
	\end{split}
\end{equation*}
Therefore, the projection $\pi^1 \bm v$ can be simplified as
\begin{equation}
	\label{eq:pi1-2ds}
	\pi^1 \bm v = \widehat{\pi}^1 \bm v + \sum_{\sigma} L _{\sigma}^1(I - \widehat{\pi}^1) \cR_{\sigma,[1]}^2  \div \bm v + \sum_{\bm e} ( (I - \widehat{\pi}^1) \cR_{\bm e,[1]}^2 \div \bm v,\bm n)_{\bm e}\varphi_{\bm e}.
\end{equation}
This and the fact that $\hat\pi^1\curl u=\curl \pi^0 u $ lead to the crucial commuting property: $\pi^1 \curl u = \curl \pi^0 u, \forall u \in H^1(\Omega).$

\subsection{Construction of $\pi^2$}
From the definition of $\widehat{\pi}^1$ in \eqref{eq:hatpi1-2d}, it follows that $\div \widehat{\pi}^1 \bm v = \div \avg^1 \bm v = \avg^2\div \bm v$ for any $\bm v \in H(\div,\Omega)$.
Therefore,
$$\div \pi^1 \bm v = \div \avg^1 \bm v + \sum_{\sigma} \div L _{\sigma}^1(I - \widehat{\pi}^1) \cR_{\sigma,[1]}^2  \div \bm v + \sum_{\bm e} ( (I - \widehat{\pi}^1) \cR_{\bm e,[1]}^2 \div \bm v,\bm n)_{\bm e}\div \varphi_{\bm e}.$$

This motivates to define for any $p \in L^2(\Omega)$ the interpolation $\pi^2 p \in \Lambda_h^2$ as follows,
$$\pi^2 p  =  \avg^2 p + \sum_{\sigma} \div L _{\sigma}^1(I - \widehat{\pi}^1) \cR_{\sigma,[1]}^2  p + \sum_{\bm e} ( (I - \widehat{\pi}^1) \cR_{\bm e,[1]}^2 p,\bm n)_{\bm e}\div \varphi_{\bm e}.$$
Clearly, such a construction satisfies the commuting property, that is,
\begin{equation}
	\pi^2 \div \bm v = \div \pi^1 \bm v.
\end{equation}

Next, it will be shown that $\pi^2$ is a projection operator. Here we use the fact that for any $p \in \Lambda_h^2$, there exists some $v \in \Lambda_h^1$ such that $p = \div \bm v$. Hence,
$$\pi^2 p = \pi^2 \div \bm v = \div \pi^1 \bm v = \div \bm v = p.$$

\section{Technical Details for the Neilan Complex in 3D}
\label{sec:neilanh1curl}
The remaining technical details in \Cref{sec:neilan3d} are provided here.
First, we prove that the degrees of freedom are equivalent to those in \cite{2015Neilan}.
On each face $\bm f$, recall the following degrees of freedom from those in $\mathbf{N}^{\curl}_k$, in \Cref{sec:neilan3d}:

\begin{equation}
	\label{eq:neilan-dof1}
	(\bm \xi\cdot \bm n, b)_{\bm f }, \quad \forall b \in (\lambda_0\lambda_1\lambda_2)^2P_{k-7}(\bm f),
\end{equation}
\begin{equation}
	\label{eq:neilan-dof2}
	\LL E_{\bm f} \bm \xi, b\RR_{1,\bm f},\quad \forall b \in (\lambda_0\lambda_1\lambda_2)^2 [B^{BDM}_{\bm f, k-7}]^{\perp},
\end{equation}
\begin{equation}
	\label{eq:neialn-dof3}
	(E_{\bm f}\curl \bm \xi, b)_{\bm f}, \forall b \in (\lambda_0\lambda_1\lambda_2)^2P_{k-8}(\bm f),
\end{equation}

while those in \cite{2015Neilan} are 
$$
	(\bm \xi\cdot \bm n, b)_{\bm f }, \quad \forall b \in P_{k-7}(\bm f),
$$
$$(E_f \bm \xi, b)_{\bm f}, \quad\forall b \in RT_{k-9}(\bm f),$$
$$	(E_{\bm f}\curl \bm \xi, b)_{\bm f}, \forall b \in P_{k-8}(\bm f).$$

In what follows, we prove that the degrees of freedom are unisolvent with respect to the shape function space, and the proof of the unisolvency implies that the resulting finite element space is just $\mathbf{N}_{k-1}^{\curl}$, namely, for finite element functions $\bm \xi$ in this finite element space, it holds that $\bm \xi \in C^0(\Omega)$ is $C^3$ at each vertex of $\mathcal T$, $C^1$ on each edge of $\mathcal T$,  and $\curl \bm \xi \in C^0(\Omega)$ is $C^1$ on each edge of $\mathcal T$. The following proof is similar to that in \cite{2015Neilan}. Precisely speaking, \Cref{lem:edge} is parallel to \cite[Lemma 3.2]{2015Neilan}, \Cref{lem:face} is parallel to \cite[Lemma 3.3]{2015Neilan}, and \Cref{thm:unisolvent} is parallel to \cite[Theorem 3.5]{2015Neilan}.

\begin{lemma}
	\label{lem:edge}
	Suppose that $\bm \xi \in [P_{k-1}(\bm K)]^3$ vanishes for the degrees of freedom $(\bm \xi \cdot \bm t_e, 1)_{\bm e}$, $f_{\bm x,i}^1(\bm \xi)$, $f_{\bm e, i}^1(\bm \xi)$, defined in \Cref{sec:neilan3d}. Then $\bm \xi, \nabla \bm \xi$ and $\nabla (\curl \bm \xi)$ vanish on all the edges of $\bm K$. 
\end{lemma}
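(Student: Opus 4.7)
The plan is to work on one edge $\bm e$ at a time and use the restriction of $\bm \xi$ to $\bm e$ (a vector-valued polynomial of degree $\le k-1$), decomposed along the moving frame $\{\bm t, \bm n_1, \bm n_2\}$ attached to $\bm e$. The key bookkeeping is that the vertex DOFs $f_{\bm x,i}^1$ force all partial derivatives of $\bm\xi$ at each endpoint of $\bm e$ to vanish through order $3$; so restrictions like $\bm \xi\cdot \bm t|_{\bm e}$, $\bm \xi\cdot\bm n_j|_{\bm e}$ vanish to order $4$ at both endpoints (they lie in $(\lambda_0\lambda_1)^4 P_{k-9}(\bm e)$), while $\partial\bm\xi/\partial \bm n_j|_{\bm e}$ component-wise vanishes to order $3$ (lies in $(\lambda_0\lambda_1)^3 P_{k-8}(\bm e)$), and $\partial(\curl\bm\xi)/\partial \bm n_j|_{\bm e}$ vanishes to order $2$ (lies in $(\lambda_0\lambda_1)^2 P_{k-7}(\bm e)$). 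Once this is in place, the DOF spaces in (1.a)--(1.f) exactly match these bubble spaces, and the vanishing of each restricted component follows by a testing argument against itself.

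First I would prove $\bm\xi|_{\bm e}=0$. The tangential component $\bm\xi\cdot \bm t|_{\bm e}$ lies in $(\lambda_0\lambda_1)^4 P_{k-9}(\bm e)$, and the assumed moment $(\bm\xi\cdot \bm t,1)_{\bm e}=0$ together with (1.a) gives $(\bm\xi\cdot \bm t, b)_{\bm e}=0$ for all $b$ in this same bubble space, so $\bm\xi\cdot \bm t|_{\bm e}=0$. For $\bm\xi\cdot \bm n_j$, testing (1.b) against $b=\bm\xi\cdot \bm n_j$ itself yields $\|\partial_{\bm t}(\bm\xi\cdot \bm n_j)\|_{L^2(\bm e)}^2 = 0$, so $\bm\xi\cdot\bm n_j$ is constant on $\bm e$; vanishing at the endpoints closes the argument. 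Next, $\nabla\bm\xi|_{\bm e}=0$: the tangential derivative $\partial\bm\xi/\partial \bm t|_{\bm e}$ vanishes because $\bm\xi|_{\bm e}=0$, while the two normal derivatives are handled componentwise by (1.c) (for the $\bm t$-component) and (1.d) (for the $\bm n_{j'}$-components), each time matching the bubble space exactly and concluding by testing against the unknown.

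For the final piece $\nabla(\curl\bm\xi)|_{\bm e}=0$, the tangential derivative $\partial(\curl\bm\xi)/\partial \bm t$ vanishes because $\nabla\bm\xi|_{\bm e}=0$ implies $\curl\bm\xi|_{\bm e}=0$. The DOF (1.e) immediately kills all three components of $\partial(\curl\bm\xi)/\partial \bm n_1$, and (1.f) kills the $\bm t$- and $\bm n_1$-components of $\partial(\curl\bm\xi)/\partial \bm n_2$ (the projector $I-\bm n_2\otimes\bm n_2$ removes only the $\bm n_2$-component). The main obstacle—and the only place the argument is not a routine match between bubble space and DOF space—is recovering the missing $\bm n_2\cdot\partial(\curl\bm\xi)/\partial \bm n_2$ on $\bm e$. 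This is precisely where the identity $\div\curl\bm\xi\equiv 0$ enters: expanded in the $\{\bm t,\bm n_1,\bm n_2\}$ frame along $\bm e$, it reads
\begin{equation*}
\bm t\cdot\tfrac{\partial (\curl\bm\xi)}{\partial \bm t} + \bm n_1\cdot\tfrac{\partial(\curl\bm\xi)}{\partial \bm n_1} + \bm n_2\cdot\tfrac{\partial(\curl\bm\xi)}{\partial \bm n_2}=0,
\end{equation*}
and the first two terms have just been shown to vanish on $\bm e$, so the third vanishes as well. This mirrors exactly the trick already used in Section~\ref{sec:neilan3d} to check $f_{\bm e,i}^2(\curl\varphi_{\bm e})=0$, and completes the proof of the lemma.
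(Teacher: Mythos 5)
Your proposal is correct and follows essentially the same route as the paper's proof: the vanishing of $\bm\xi$ and $\nabla\bm\xi$ on edges (which the paper dismisses as "easy to see") is handled by matching the edge restrictions, forced into bubble spaces by the vertex DOFs, against the test spaces of (1.a)--(1.d), and the missing component $\bm n_2\cdot\partial(\curl\bm\xi)/\partial\bm n_2$ is recovered by exactly the same identity $\div\curl\bm\xi=0$ expanded in the frame $\{\bm t,\bm n_1,\bm n_2\}$ that the paper uses. Your write-up simply supplies the details the paper delegates to Neilan's original argument.
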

\begin{proof}
It is easy to see that $\bm \xi$ and $\nabla \bm \xi$ vanish on all the edges of $\bm K$. For the term $\nabla(\curl \bm \xi)$, it follows from the vanishing degrees of freedom that 
$\frac{\partial(\curl \bm \xi)}{\partial \bm n_1}$, $\bm t \cdot \frac{\partial(\curl \bm \xi)}{\partial \bm n_2}$, and $\bm n_1 \cdot \frac{\partial(\curl \bm \xi)}{\partial \bm n_2}$ vanish on all the edges. Furthermore, a direct calculation yields that 
$$\bm n_2 \cdot \frac{\partial(\curl \bm \xi)}{\partial \bm n_2} =  - \bm t \cdot \frac{\partial(\curl \bm \xi)}{\partial \bm t}  - \bm n_1 \cdot \frac{\partial(\curl \bm \xi)}{\partial \bm n_1} = 0.$$ Therefore, $\nabla(\curl \bm \xi)$ vanishes on all the edges of $\bm K$. See \cite[Theorem 3.5]{2015Neilan} for more details.
\end{proof}

\begin{lemma}
	\label{lem:face}
	Suppose that $\bm \xi \in [P_{k-1}(\bm K)]^3$ vanishes for the degrees of freedom $(\bm \xi \cdot \bm t, 1)_{\bm e}$, $f_{\bm x,i}^1(\bm \xi)$, $f_{\bm e, i}^1(\bm \xi)$, $f_{\bm f,i}^1(\bm \xi)$, defined in \Cref{sec:neilan3d}. Then $\bm \xi = \curl \bm \xi = 0$ on each face $\bm f$ of $\bm K$.
\end{lemma}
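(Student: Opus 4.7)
The plan is to build on \Cref{lem:edge} and handle the normal and tangential parts of $\bm \xi|_{\bm f}$ and $\curl \bm \xi|_{\bm f}$ separately, in each case matching the trace regularity obtained from the edge-vanishing of $\bm \xi$, $\nabla \bm \xi$, and $\nabla(\curl \bm \xi)$ against the ranges of the harmonic DOFs (2.a)--(2.c), via the exact face bubble complex \eqref{eq:neilan-facebubble}. First I would upgrade \Cref{lem:edge}: since the vertex DOFs in \Cref{sec:neilan3d} fix all derivatives of $\bm \xi$ of order $\le 3$ at each vertex $\bm x$, $\curl \bm \xi$ vanishes at each vertex; combined with $\nabla(\curl \bm \xi)|_{\bm e}=0$ the polynomial $\curl \bm \xi|_{\bm e}$, being constant on $\bm e$ and zero at its endpoints, must vanish identically on $\bm e$. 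Hence $\bm \xi$, $\nabla \bm \xi$, $\curl \bm \xi$, and $\nabla(\curl \bm \xi)$ all vanish on every edge of $\bm f$.

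Fixing a face $\bm f$, I would proceed in three steps. In Step A (normal trace), the edge-vanishing places $\bm \xi\cdot\bm n|_{\bm f}$ in $(\lambda_0\lambda_1\lambda_2)^2 P_{k-7}(\bm f)$, and the DOFs (2.a) form a basis of exactly this space, so $\bm \xi\cdot\bm n|_{\bm f}=0$. In Step C (tangential curl), since $\curl\bm \xi$ and $\nabla\curl\bm \xi$ vanish on $\partial\bm f$, the vector $E_{\bm f}\curl\bm \xi|_{\bm f}$ lies in $(\lambda_0\lambda_1\lambda_2)^2 [P_{k-8}(\bm f)]^2$, and the (vector-valued) bubbles in DOFs (2.c) span this very space, forcing $E_{\bm f}\curl\bm \xi|_{\bm f}=0$. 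Combined with the identity $\bm n\cdot\curl\bm \xi|_{\bm f}=\rot_{\bm f}(E_{\bm f}\bm \xi)$, the remaining task is Step B: to show $E_{\bm f}\bm \xi|_{\bm f}=0$.

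Step B is the main obstacle. Here $E_{\bm f}\bm \xi|_{\bm f}\in (\lambda_0\lambda_1\lambda_2)^2 [P_{k-7}(\bm f)]^2$, but the DOFs (2.b) only assert $\LL E_{\bm f}\bm \xi, b\RR_{1,\bm f}=0$ for $b$ in the rotated BDM bubble subspace $(\lambda_0\lambda_1\lambda_2)^2 [B^{BDM}_{\bm f,k-7}]^{\perp}$, a proper subspace. The strategy is to exploit the exactness of the rotated face bubble complex \eqref{eq:neilan-facebubble}. First, test with $b=\grad_{\bm f}\beta$ where $\beta\in B^2_{\bm f,k-9}$, which kills $\rot_{\bm f} b$ and yields $\mathcal P_{\grad_{\bm f} B^2_{\bm f,k-9}} E_{\bm f}\bm \xi=0$. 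Next, test with generic $b$ so that only the $(\rot_{\bm f} E_{\bm f}\bm \xi,\rot_{\bm f} b)_{\bm f}$ term survives, and use the fact that $\rot_{\bm f}$ maps the test space surjectively onto $(\lambda_0\lambda_1\lambda_2)^2 P_{k-8}(\bm f)/\mathbb R$ by exactness, while $\rot_{\bm f} E_{\bm f}\bm \xi=\bm n\cdot\curl\bm \xi|_{\bm f}$ lies in the same space (by the upgraded edge-vanishing) and has zero mean over $\bm f$ by Stokes' theorem; this forces $\rot_{\bm f} E_{\bm f}\bm \xi=0$. Finally, since $E_{\bm f}\bm \xi$ is irrotational and vanishes to order $2$ on $\partial \bm f$ with $\bm f$ simply connected, a Poincar\'e-lemma argument gives $E_{\bm f}\bm \xi=\grad_{\bm f}\alpha$ with $\alpha\in B^2_{\bm f,k-9}$, and the earlier $\mathcal P_{\grad_{\bm f} B^2_{\bm f,k-9}} E_{\bm f}\bm \xi=0$ then collapses $E_{\bm f}\bm \xi$ to zero. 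The hardest bookkeeping is checking that the degrees and vanishing orders of $E_{\bm f}\bm \xi$ and $\rot_{\bm f} E_{\bm f}\bm \xi$ exactly match the ranges of $\grad_{\bm f}$ and $\rot_{\bm f}$ in the bubble complex, which is precisely what the high multiplicities $(\lambda_0\lambda_1\lambda_2)^2$ and $(\lambda_0\lambda_1\lambda_2)^3$ in the Neilan bubble spaces are designed to guarantee.
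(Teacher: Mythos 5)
Your proposal is correct, and your Steps A and C coincide with the paper's argument; the genuine difference is in Step B. The paper never decomposes the harmonic form: it writes $E_{\bm f}\bm \xi = (\lambda_0\lambda_1\lambda_2)^2\bm \xi_{\bm f}$ and shows by a pointwise computation on each edge --- expanding $0=\frac{\partial}{\partial \bm n_{\bm e}}(\curl\bm \xi\cdot\bm n_{\bm f})=\frac{\partial}{\partial \bm n_{\bm e}}\rot_{\bm f}(E_{\bm f}\bm \xi)$ restricted to $\bm e$, where only the term $2(\partial b_{\bm f}/\partial\bm n_{\bm e})^2(\bm \xi_{\bm f}\cdot\bm t_{\bm e})$ survives --- that $\bm \xi_{\bm f}\cdot\bm t_{\bm e}=0$ on $\partial\bm f$; hence $E_{\bm f}\bm \xi$ itself lies in the test space $(\lambda_0\lambda_1\lambda_2)^2[B^{BDM}_{\bm f,k-7}]^{\perp}$ of (2.b), and the vanishing of those DOFs kills it because $\LL\cdot,\cdot\RR_{1,\bm f}$ is positive definite there (again by exactness of \eqref{eq:neilan-facebubble}). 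You instead split $\LL\cdot,\cdot\RR_{1,\bm f}$: gradient test functions give $\mathcal P_{\grad_{\bm f}B^2_{\bm f,k-9}}E_{\bm f}\bm \xi=0$; surjectivity of $\rot_{\bm f}$ from the test space onto the zero-mean part of $(\lambda_0\lambda_1\lambda_2)^2P_{k-8}(\bm f)$, together with $\int_{\bm f}\rot_{\bm f}(E_{\bm f}\bm \xi)=\int_{\partial\bm f}\bm \xi\cdot\bm t=0$, gives $\rot_{\bm f}E_{\bm f}\bm \xi=0$; then a polynomial Poincar\'e/divisibility argument writes $E_{\bm f}\bm \xi=\grad_{\bm f}\alpha$ with $\alpha\in(\lambda_0\lambda_1\lambda_2)^3P_{k-9}(\bm f)$, which your first identity annihilates. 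The trade-off: the paper avoids the Poincar\'e step, which is slightly more than the literal exactness of \eqref{eq:neilan-facebubble} since $E_{\bm f}\bm \xi$ is not known a priori to lie in the bubble space --- you must argue directly that the potential $\alpha$, constant on the connected boundary and with second derivatives vanishing on each edge line (this uses the full strength of $\nabla\bm \xi=0$ on the edges from \Cref{lem:edge}), is divisible by $(\lambda_0\lambda_1\lambda_2)^3$; conversely, you avoid the paper's delicate edge computation and never need the tangential-trace fact at all. Two minor remarks: your ``upgrade'' of \Cref{lem:edge} is immediate, since $\curl\bm \xi$ consists of first derivatives of $\bm \xi$, so $\nabla\bm \xi=0$ on the edges already gives $\curl\bm \xi=0$ there without appealing to the vertex DOFs; and your vector-valued reading of the test space in (2.c) is the intended one, being what both the paper's proof and the comparison with Neilan's original degrees of freedom require.
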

\begin{proof}
	By \Cref{lem:edge}, it holds that \begin{equation}\label{eq:neilan-vanishingedge}\bm \xi, \nabla \bm \xi, \nabla(\curl \bm \xi)\text{ all vanish on the boundary }\partial \bm f \text{ of face } \bm f .
	\end{equation} Then it follows that $\bm \xi \cdot \bm n_{\bm f} = (\lambda_0\lambda_1\lambda_2)^2 z_1$ for some $z_1 \in P_{k-7}(\bm f)$, with $\bm n_{\bm f}$ the unit normal vector of $\bm f$. It then follows from \eqref{eq:neilan-dof1} that $\bm \xi \cdot \bm n_{\bm f} = 0$ on $\bm f$. A similar argument shows that $E_{\bm f}\curl \bm \xi = 0.$

	Now we show that $E_{\bm f} \bm \xi = 0$. Denote by $\bm t_{\bm e}$ ($\bm n_{\bm e}$) the unit tangential (normal) vector of the edge $\bm e$ on face $\bm f$. From \eqref{eq:neilan-vanishingedge} it holds that $E_{\bm f} \bm \xi = (\lambda_0\lambda_1\lambda_2)^2\bm \xi_{\bm f}$ for some $\bm \xi_{\bm f} \in [P_{k-7}(\bm f)]^2$, it then suffices to show that $\bm \xi_f \cdot \bm t_{e}$ vanishes on all the edges $\bm e$ of $\partial \bm f$. Let $b_{\bm f} = \lambda_0\lambda_1\lambda_2$ be the bubble function on $\bm f$. On each $\bm e$ of $\bm f$, it holds that
	\begin{equation}
		\begin{split}
			0  &= \frac{\partial}{\partial \bm n_{\bm e}} (\curl \bm \xi \cdot \bm n_{\bm f}) = \frac{\partial}{\partial \bm n_{\bm e}} (\rot_{f} E_{\bm f} \bm \xi) \\
			& = \frac{\partial}{\partial \bm n_{\bm e}}  (2b_{\bm f} \curl_{\bm f} b_{\bm f}\cdot \bm \xi_{\bm f} + b^2 \rot_{\bm f} \bm \xi_{\bm f}).
		\end{split}
	\end{equation}
	Restricted on $\bm e$, the second term vanishes. Therefore,
	\begin{equation}
		\begin{split}
			0 & = \frac{\partial}{\partial \bm n_{\bm e}} (2b_{\bm f} \curl_{\bm f} b_{\bm f} \cdot \bm \xi_{\bm f}) \\
			&  = 2(\frac{\partial b_{\bm f}}{\partial \bm n_{\bm e}})^2 (\bm \xi_{\bm f}\cdot \bm t_{\bm e}) - 2\frac{\partial b_{\bm f}}{\partial \bm n_{\bm e}}\frac{\partial b_{\bm f}}{\partial \bm t_{\bm e}}(\bm \xi_{\bm f}\cdot \bm n_{\bm e}) + 2b_{\bm f} \frac{\partial}{\partial \bm n_{\bm e}} (\curl_{\bm f} b_{\bm f} \cdot \bm \xi_{\bm f}).
		\end{split}
	\end{equation}
	As a result, the second and thrid term vanish, then $\bm \xi_{\bm f} \cdot \bm t_{\bm e} = 0$, hence $\bm \xi_{\bm f} \in [B^{BDM}_{\bm f, k-7}]^{\perp}$, and it follows from \eqref{eq:neilan-dof2} that $\bm \xi = 0$ on $\bm f$.

	Finally, $\curl \bm \xi\cdot \bm n_{\bm f} = \rot_{\bm f} (E_f\bm \xi) = 0$, together with $E_{\bm f} \curl \bm \xi = 0$ it then concludes that $\curl \bm \xi = 0$.

\end{proof}

\begin{theorem}\label{thm:unisolvent} The degrees of freedom are unisolvent with respect to the shape function space, and the resulting finite element space is $\mathbf{N}^{\curl}_{k-1}$.
\end{theorem}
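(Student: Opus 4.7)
The plan is to establish unisolvency by showing that vanishing of all the listed degrees of freedom on a single tetrahedron $\bm K$ forces $\bm\xi\equiv 0$, and then to verify that the count of DOFs matches $\dim[P_{k-1}(\bm K)]^3=3\binom{k+2}{3}$. The characterization of the assembled space as $\mathbf{N}^{\curl}_{k-1}$ will then follow by tracking which shared DOFs encode which continuity properties across inter-element boundaries.

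First I would invoke \Cref{lem:edge} to conclude that $\bm\xi$, $\nabla\bm\xi$ and $\nabla(\curl\bm\xi)$ vanish on every edge of $\bm K$. Feeding this into \Cref{lem:face} yields $\bm\xi=0$ and $\curl\bm\xi=0$ on every face, so $\bm\xi$ lies in the interior bubble space $B_{n,\bm K}^{\curl}=\{\bm\eta\in[P_{k-1}(\bm K)]^3:\bm\eta=\curl\bm\eta=0\text{ on }\partial\bm K\}$. The remaining interior DOFs $\LL\bm\xi,b\RR_{1,\bm K}=0$ for all $b\in B_{n,\bm K}^{\curl}$ apply to $b=\bm\xi$ itself, giving $\LL\bm\xi,\bm\xi\RR_{1,\bm K}=0$, hence $\curl\bm\xi=0$ in $\bm K$ and $\proj_{\grad B_{\bm K,k-8}^1}\bm\xi=0$. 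Here the crux is that $\LL\cdot,\cdot\RR_{1,\bm K}$ is actually an inner product on $B_{n,\bm K}^{\curl}$: since $\curl\bm\xi=0$ in $\bm K$ and $\bm\xi$ vanishes on $\partial\bm K$, the exactness of the element bubble complex \eqref{eq:neilan-cellbubble} provides $\varphi\in B_{\bm K,k-8}^1=(\lambda_0\lambda_1\lambda_2\lambda_3)^2 P_{k-8}(\bm K)$ with $\bm\xi=\grad\varphi$, so $\bm\xi\in\grad B_{\bm K,k-8}^1$ and therefore $\bm\xi=\proj_{\grad B_{\bm K,k-8}^1}\bm\xi=0$.

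To finish unisolvency I would then carry out the dimension count. This is tedious but essentially combinatorial: split the DOFs into vertex, edge, face, and interior groups, and express each group's cardinality in terms of dimensions of the bubble spaces $(\lambda_0\lambda_1)^jP_m(\bm e)$, $(\lambda_0\lambda_1\lambda_2)^jP_m(\bm f)$, $B_{\bm f,m}^{BDM}$, $B_{n,\bm K}^{\curl}$, etc. The exact bubble sequences on edges, faces, and the element make the constraint counts bookkeep cleanly, and the total should reproduce $3\binom{k+2}{3}$, paralleling the corresponding computation in \cite{2015Neilan} (which establishes unisolvency with a slightly different but equivalent set of face and interior moments).

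For the global identification with $\mathbf{N}^{\curl}_{k-1}$, I would argue as follows. Matching the vertex DOFs across shared vertices forces $\bm\xi$ to be $C^3$ at vertices. Matching edge DOFs (1.a)--(1.d) in combination with \Cref{lem:edge} (which shows these DOFs together with vertex DOFs determine $\bm\xi$ and $\nabla\bm\xi$ along each edge) forces $\bm\xi$ to be $C^1$ on each edge; matching (1.e)--(1.f), together with the edge-lemma identity $\bm n_2\cdot\partial_{\bm n_2}(\curl\bm\xi)=-\bm t\cdot\partial_{\bm t}(\curl\bm\xi)-\bm n_1\cdot\partial_{\bm n_1}(\curl\bm\xi)$, forces $\curl\bm\xi$ to be $C^1$ on each edge. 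Finally, matching the face DOFs (2.a)--(2.c) combined with \Cref{lem:face} forces the global $C^0$-continuity of both $\bm\xi$ and $\curl\bm\xi$ across faces. The hardest part of the whole argument is the inner-product verification of $\LL\cdot,\cdot\RR_{1,\bm K}$ on $B_{n,\bm K}^{\curl}$, but as noted above it reduces immediately to the exactness of \eqref{eq:neilan-cellbubble}; the dimension accounting is routine but lengthy.
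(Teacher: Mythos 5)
Your proposal is correct and takes essentially the same route as the paper: \Cref{lem:edge} and \Cref{lem:face} reduce matters to the interior bubble space $B_{n,\bm K}^{\curl}$, the interior degrees of freedom then force $\bm\xi=0$, and the count of degrees of freedom is matched against \cite{2015Neilan}. The only differences are presentational: you make explicit why $\LL\cdot,\cdot\RR_{1,\bm K}$ is definite on $B_{n,\bm K}^{\curl}$ via the exactness of the cell bubble complex \eqref{eq:neilan-cellbubble} (a step the paper leaves implicit in ``it follows from the degrees of freedom $f_{\bm K,i}^1(\cdot)$ that $\bm\xi=0$''), and you propose a direct dimension count where the paper simply notes $\dim B^{BDM}_{\bm f,k-7}=\dim RT_{k-9}$ to transfer Neilan's count.
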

\begin{proof}
Note that the number of degrees of freedom is the same as that in \cite{2015Neilan}, since $\dim B_{\bm f, k - 7}^{BDM} = \dim RT_{k-9}$. It suffices to show that if $\bm \xi \in [P_{k-1}(\bm K)]^3$ vanishes for the above all the degrees of freedom, then $\bm \xi = 0$. It follows from \Cref{lem:face} that $\bm \xi = \curl \bm \xi = 0$ in all the faces of $\bm K$, and therefore it follows from the degrees of freedom $f_{\bm K, i}^1(\cdot)$ such that $\bm \xi = 0$. \Cref{lem:edge,lem:face} imply the additional regularity of the resulting finite element space, which completes the proof. 
\end{proof}

\end{document}